\theoremstyle{plain}
\newtheorem{theorem}{Theorem}[section]
\newtheorem{proposition}[theorem]{Proposition}
\newtheorem{lemma}[theorem]{Lemma}
\newtheorem{corollary}[theorem]{Corollary}
\newtheorem{remark}[theorem]{Remark}
\newtheorem{definition}[theorem]{Definition}
\newtheorem{notation}[theorem]{Notation}
\newtheorem{example}[theorem]{Example}
\newtheorem{main theorem}[theorem]{Main Theorem}
\newtheorem{question}[theorem]{Question}
\newtheorem{convention}[theorem]{Convention}
\newtheoremstyle{citing}
  {}
  {}
  {\itshape}
  {}
  {\bfseries}
  {.}
  {.5em}
  {\thmnote{#3}}
\theoremstyle{citing}
\newtheorem*{citingtheorem}{}
\newlength\savewidth
\newcommand{\ZZ}{\mathbb{Z}}
\newcommand{\RR}{\mathbb{R}}
\newcommand{\HH}{\mathbb{H}}
\newcommand{\interior}{\operatorname{int}}
\newcommand{\Fix}{\operatorname{Fix}}
\newcommand{\cl}{\operatorname{cl}}
\newcommand{\Homeo}{\operatorname{Homeo}}
\newcommand{\MCG}{\operatorname{MCG}}
\newcommand{\Diff}{\operatorname{Diff}}
\newcommand{\Mdy}{\Gamma}
\newcommand{\Motion}{\mathcal{M}}
\newcommand{\Out}{\operatorname{Out}}
\newcommand{\Deg}{\operatorname{Deg}}
\newcommand{\RP}{\mathbb{RP}}
\newcommand{\Isom}{\operatorname{Isom}}
\newcommand{\id}{\mathrm{id}}
\newcommand{\Int}{\operatorname{Int}}
\newcommand{\image}{\operatorname{image}}
\newcommand{\SO}{\operatorname{SO}}
\newcommand{\Dh}{\mathbb{D}}
\newcommand{\Inv}{\mathcal{I}}
\newcommand{\Curve}{\mathcal{S}}
\renewcommand\subsection{\@startsection{subsection}{2}{0mm}
    {-10.5dd plus-8pt minus-4pt}{10.5dd}
     {\normalsize\upshape}}
\begin{document}

\title[Homotopy motions of surfaces]
{Homotopy motions of surfaces in $3$-manifolds}

\author{Yuya Koda}
\address{Department of Mathematics\\
Faculty of Science\\
Hiroshima University\\
Higashi-Hiroshima, 739-8526, Japan}
\email{ykoda@hiroshima-u.ac.jp}

\author{Makoto Sakuma}
\address{Advanced Mathematical Institute\\
Osaka Metropolitan University\\
3-3-138, Sugimoto, Sumiyoshi, Osaka City
558-8585, Japan}
\address{Department of Mathematics\\
Faculty of Science\\
Hiroshima University\\
Higashi-Hiroshima, 739-8526, Japan}
\email{sakuma@hiroshima-u.ac.jp}


\makeatletter
\@namedef{subjclassname@2020}{%
  \textup{2020} Mathematics Subject Classification}
\makeatother

\subjclass[2020]{Primary 57K30, Secondary 57K20, 57M10, 57M12}


\thanks{
Y. K. is supported by JSPS KAKENHI Grant Numbers JP17H06463, 
JP20K03588 and JST CREST Grant Number JPMJCR17J4. 
M. S. is supported by JSPS KAKENHI Grant Number JP20K03614
and by Osaka Central Advanced Mathematical Institute 
(MEXT Joint Usage/Research Center on Mathematics and Theoretical Physics JPMXP0619217849).}

\begin{abstract}
We introduce the concept of a homotopy motion of a subset in a manifold, 
and give a systematic study of homotopy motions of surfaces in closed orientable
3-manifolds. 
This notion arises from various natural problems in 3-manifold theory such as domination of manifold pairs, 
homotopical behavior of simple loops on a Heegaard surface, and 
monodromies of virtual branched covering surface bundles associated to a Heegaard splitting. 
\end{abstract}
\maketitle

\section*{Introduction}
\label{sec:intro}

\subsection{Homotopy motion groups and related groups}

For a manifold $M$ and a compact subspace $\Sigma$, 
a {\it motion} of $\Sigma$ in $M$ is an ambient isotopy of 
$M$ of compact support that ends up with a homeomorphism
preserving the subset $\Sigma$.
The {\it motion group} $\Motion(M,\Sigma)$ of $\Sigma$ in $M$ 
is the group made up of the equivalence classes of such motions
where the product is defined by concatenation of ambient isotopies.
The concept of a motion has its origin in the braid group,
which can be regarded as the motion group of a finite set in the plane.  
In his 1962 PhD thesis \cite{Dahm62} supervised by Fox,
Dahm developed a general theory of motions and
calculated the motion group of a trivial link in the Euclidean space. 
In \cite{Goldsmith81}, Goldsmith published an exposition of Dahm's thesis,
and in the succeeding paper \cite{Goldsmith82},
she obtained generators and relations of the motion groups
of torus links in $S^3$.
Since then (variations of) motion groups have been studied 
by many researchers in various settings.
(See \cite{BrendleHatcher13, Damiani-Kamada19, Fukushima20} and references therein.)

In the case where $M$ is a closed, orientable $3$-manifold and $\Sigma$
is a Heegaard surface,
Johnson-Rubinstein \cite{JohnsonRubinstein13} and 
Johnson-McCullough \cite{JohnsonMcCullough13} studied
the (smooth) motion group $\Motion(M,\Sigma)$ and 
its quotient group $G(M,\Sigma)$ defined by
\begin{align*}
G(M,\Sigma)=
& \{[f]\in \MCG(\Sigma) \mid 
\mbox{There exists a motion $\{f_t\}_{t\in I}$ with $j\circ f=f_1|_{\Sigma}$.}\}\\
= &
\{[f]\in \MCG(\Sigma) \mid 
\mbox{$j\circ f:\Sigma\to M$ is ambient isotopic to $j$.}\},
\end{align*}
where $\MCG(\Sigma)$ is the mapping class group of $\Sigma$ and $j:\Sigma\to M$ is the inclusion map.
These groups are also intimately related to the pairwise mapping class group $\MCG(M,\Sigma)$ 
first studied by Goeritz \cite{Goeritz33} in 1933,
which has been attracting attention of various researchers.
See Section \ref{sec:The homotopy motion groups of Heegaard surfaces and their friends}
for a brief summary.

Motivated by Minsky's question \cite{Gordon07} on 
the homotopical behavior of simple loops on a Heegaard surface
in the ambient $3$-manifold (see Question \ref{question:Minsky} below)
and the second author's joint work with Donghi Lee \cite{LeeSakuma12}
on the corresponding problem for $2$-bridge spheres of $2$-bridge links
(see Subsection \ref{subsec:motivation}),
we are naturally lead to 
a homotopy version of the motion group $\Motion(M,\Sigma)$
and that of the group $G(M,\Sigma)$. 

A {\it homotopy motion} of a closed surface $\Sigma$ in 
a compact $3$-manifold $M$ is
a homotopy $F=\{f_t\}_{t\in I}:\Sigma\times I \to M$,
such that the initial end $f_0$ is the inclusion map $j:\Sigma\to M$ and 
the terminal end $f_1$ is an embedding with image $\Sigma$,
where $f_t:\Sigma\to M$ ($t\in I=[0,1]$) is the continuous map from $\Sigma$ to $M$
defined by $f_t(x)=F(x,t)$.
The {\it homotopy motion group} $\Pi(M,\Sigma)$ is the group of
equivalence classes of homotopy motions of $\Sigma$ in $M$,
where the product is defined by concatenation of homotopies
(see Section $\ref{sec:The homotopy motion groups of incompressible surfaces in Haken manifolds}$ for the precise definition).
There is a natural homomorphism $\partial_+:\Pi(M,\Sigma)\to \MCG(\Sigma)$
which assigns 
(the equivalence class of) a homotopy motion 
with (the mapping class represented by) its terminal end. 
We denote the image of $\partial_+$ by $\Mdy(M,\Sigma)$. 
Then we have
\begin{align*}
\Mdy(M,\Sigma)
&=
\{[f]\in \MCG(\Sigma) \mid 
\mbox{$j\circ f:\Sigma\to M$ is homotopic to the inclusion map $j$.}\}.
\end{align*} 
By denoting the kernel of $\partial_+$ by $\mathcal{K} (M, \Sigma)$,
we have the following exact sequence.
\begin{align}
\label{exact sequence for homotopy motion group}
\xymatrix{
1 \ar[r] & \mathcal{K} (M, \Sigma) \ar[r] & \Pi (M, \Sigma) \ar[r]^{\partial_+} &  \Mdy (M, \Sigma) \ar[r]  & 1 .  }
\end{align}
In the case where $M$ is a closed, orientable $3$-manifold and $\Sigma$
is a Heegaard surface, the above exact sequence  
is a homotopy version
of the following exact sequence studied by Johnson-McCullough \cite{JohnsonMcCullough13}. 
\begin{align}
\label{JohnsonMcCullough exact sequence for homotopy motion group}
\xymatrix{
1 \ar[r] & \pi_1(\Diff(M)) \ar[r] & \Motion (M, \Sigma) \ar[r] &  G (M, \Sigma) \ar[r]  & 1,}
\end{align}
where $\Diff(M)$ is the space of diffeomorphisms of $M$.
(The smooth motion group $\Motion (M, \Sigma)$ corresponds to 
$\mathcal{H}_1(M,\Sigma)$
in \cite{JohnsonMcCullough13},
the fundamental group of the space $\mathcal{H}(M,\Sigma)$
of Heegaard surfaces equivalent to $(M,\Sigma)$.) 

The purpose of this paper is 
to give a systematic study of 
the homotopy motion group $\Pi (M, \Sigma)$
and the related groups in the exact sequence (\ref{exact sequence for homotopy motion group})
for a closed, orientable surface $\Sigma$ in a closed, orientable $3$-manifold $M$. 
 
\subsection{Motivation}
\label{subsec:motivation}

Before stating the main results, 
we explain our motivation.
Let $\Sigma$ be a Heegaard surface of a closed, orientable $3$-manifold $M$,
and let $V_1$ and $V_2$ be the handlebodies obtained by cutting $M$ along $\Sigma$.
Let $\Mdy(V_i)$ be the kernel of the homomorphism
$\MCG(V_i)\to \Out(\pi_1(V_i))$ ($i=1,2$).
Now, let $\Curve(\Sigma)$ be the set of
the isotopy classes of essential loops on $\Sigma$.
Let $\Delta_i\subset\Curve(\Sigma)$ be the set of (isotopy classes of) meridians,
i.e., the essential loops on $\Sigma$ that bound disks 
in $V_i$. 
Set $\Delta := \Delta_1 \cup \Delta_2$. 
Let $Z\subset\Curve(\Sigma)$ be the set of (isotopy classes of) 
essential loops on $\Sigma$ that are null-homotopic in $M$. 
In \cite[Question 5.4]{Gordon07}, Minsky raised the following question.

\begin{question}
\label{question:Minsky}
{\rm
When is $Z$ equal to the orbit 
$\langle \Mdy (V_1), \Mdy(V_2)\rangle \Delta$?
}
\end{question}

Note that the group $\Mdy(V_i)$ is identified with 
the group $\Mdy(V_i,\Sigma)=\partial_+(\Pi(V_i,\Sigma))<\MCG(\Sigma)$
and so $\langle \Mdy (V_1), \Mdy(V_2)\rangle$ 
is contained in 
the group $\Mdy(M,\Sigma)=\partial_+(\Pi(M,\Sigma))$.
Moreover, since the action of $\Mdy(M,\Sigma)$ on $\Curve(\Sigma)$
preserves the homotopy classes of loops in the ambient manifold $M$, we have
\[
\langle \Mdy (V_1), \Mdy(V_2)\rangle \Delta 
\ \subset \
\Mdy(M,\Sigma)\Delta
\ \subset \ Z.
\] 
This suggests that it is natural to work with the group 
$\Mdy(M,\Sigma)$ rather than the group
$\langle \Mdy (V_1), \Mdy(V_2)\rangle$ for Question \ref{question:Minsky},
and we have the following refinement of the question.

\begin{question}
\label{question:Refined:Homotopy class of curves in 3-manifold}
{\rm
Let $\Sigma$ be a Heegaard surface of a closed, orientable $3$-manifold $M$.
\begin{enumerate}
\item
When is $Z$ equal to the orbit 
$\Mdy(M,\Sigma) \Delta$? 
\item
Let $\kappa: \Curve(\Sigma)/\Mdy(M,\Sigma) \to \Curve(\Sigma)/\simeq_M$
be the projection,
where $\simeq_M$ is the equivalence relation on $\Curve(\Sigma)$
induced by homotopy in $M$,
namely two essential simple loops of $\Sigma$ are equivalent 
with respect to $\simeq_M$
if they are homotopic in $M$. 
Then how far is the map $\kappa$ from being injective?
In particular, when is the restriction of $\kappa$ to 
$(\Curve(\Sigma)-Z)/\Mdy(M,\Sigma)$ injective?
\end{enumerate}
}
\end{question}

The corresponding question for $2$-bridge spheres
for $2$-bridge links were completely solved by Lee-Sakuma \cite{LeeSakuma12,LeeSakuma14},
and applied the study of epimorphisms among $2$-bridge knot groups
\cite[Theorem 8.1]{AimiLeeSakaiSakuma20})
and variantions of McShane's identity \cite{LeeSakuma13}
(see \cite{LeeSakuma11} for summary).
This paper, as well as Ohshika-Sakuma \cite{OhshikaSakuma16}, 
is motivated by the natural question to what extent these results hold
in general setting.

To explain the main question treated in this paper,
we note the following facts that easily follow from 
\cite{LeeSakuma14} (cf. \cite{KodaSakuma20b}).
(Below, we use the same symbol $(M, \Sigma)$ for a $2$-bridge decomposition by abusing notation.)

\begin{itemize}
\item
If the Hempel distance 
of the $2$-bridge sphere is $\ge 3$, then
\[
\Mdy(M,\Sigma) =
\langle \Mdy (V_1), \Mdy(V_2)\rangle ,
\]
whereas if the Hempel distance 
of the $2$-bridge sphere is $2$, 
then
\[
\Mdy(M,\Sigma) \gneq
\langle \Mdy (V_1), \Mdy(V_2)\rangle .
\]
In the latter case, the index 
$[\Mdy(M,\Sigma) :\langle \Mdy (V_1), \Mdy(V_2)\rangle ]$
is $2$,
and the gap arises from the open book structure of the link complement
whose binding is the axis of the $2$-strand braid 
representing the $2$-bridge torus link
(see \cite[p.5]{LeeSakuma14} and Section \ref{sec:Open book rotations}).

Moreover, in both cases, the image of $\langle \Mdy (V_1), \Mdy(V_2)\rangle$
in the automorphism group of the curve complex of the $4$-times punctured sphere
is isomorphic to the free product of those of $\Mdy (V_1)$ and $\Mdy(V_2)$.
\end{itemize}

Thus the following question naturally arises.

\begin{question}
\label{question:motivation}
{\rm
(1) When is the group $\langle \Mdy (V_1), \Mdy(V_2)\rangle$ equal to $\Mdy(M,\Sigma)$?

(2) When is the group $\langle \Mdy (V_1), \Mdy(V_2)\rangle$ equal to the free product
$\Mdy (V_1) * \Mdy(V_2)$?}
\end{question}

A partial answer to the second question was given by
Bowditch-Ohshika-Sakuma in \cite[Theorem B]{OhshikaSakuma16}
(cf. Bestvina-Fujiwara \cite[Section 3]{BestvinaFujiwara17}), 
which says that if the Hempel distance is large enough,
then the orientation-preserving subgroup
$\langle \Mdy^+ (V_1), \Mdy^+ (V_2)\rangle$
is equal to the free product 
$\Mdy^+ (V_1) *  \Mdy^+(V_2)$.

\subsection{Main results}

A main purpose of this paper is
to give the following partial answer to Question \ref{question:motivation}(1).

\begin{citingtheorem}[Theorem \ref{thm:well-definedness of homological degree for aspherical manifolds}] 
Let $M = V_1 \cup_{\Sigma} V_2$ be a Heegaard splitting of 
a closed, orientable $3$-manifold $M$ induced from an open book decomposition.  
If $M$ has an aspherical prime summand,  
then we have $\langle \Mdy (V_1), \Mdy (V_2) \rangle  \lneq \Mdy (M, \Sigma)$. 
\end{citingtheorem}

To prove this theorem we construct a $\ZZ^2$-valued invariant of $\Mdy (M, \Sigma)$,
i.e., a map
$\Deg : \Mdy (M, \Sigma) \to \ZZ^2$,
such that its mod $2$ reduction vanishes on the subgroup 
$\langle \Mdy (V_1), \Mdy (V_2) \rangle$. 
This actually comes from a natural 
invariant $\widehat{\Deg} : \Pi (M, \Sigma) \to \ZZ^2$, 
where the well-definedness of $\Deg$ is equivalent to the vanishing of $\widehat{\Deg}$
on the subgroup $\mathcal{K}(M,\Sigma)$.

An element $\alpha$ of $\mathcal{K}(M,\Sigma)$ is represented by a homotopy motion
$F=\{f_t\}_{t\in I}:\Sigma\times I \to M$,
such that both $f_0$ and $f_1$ are equal to the inclusion map $j:\Sigma\to M$.
Thus $F$ determines a continuous map $\hat F:\Sigma\times S^1\to M$. 
Though the homotopy class of $\hat F$ is not 
always uniquely determined by 
$\alpha \in \mathcal{K}(M,\Sigma)$, 
its degree is uniquely determined by $\alpha$, and so
we have a homomorphism 
$\deg : \mathcal{K}(M,\Sigma)\to\ZZ$
(see Definition \ref{def:degree-KernelGroup}). 
The map $\Deg : \Mdy (M, \Sigma) \to \ZZ^2$ 
is well-defined if and only if
the homomorphism $\deg : \mathcal{K}(M,\Sigma)\to\ZZ$ vanishes
(see the paragraph just before
Proposition \ref{prop:well-definedness of homological degree for aspherical manifolds}).
The problem of whether this condition holds can be regarded as
a refined version 
of the problem of dominations  
among $3$-manifolds,
which has been a subject of extensive literatures
(see e.g. \cite{Wang02, KotschickNeofytidis13, Neofytidis18} and references therein). 

\begin{definition}
\label{def:Sigma-domination}
{\rm
We say that a  closed, orientable surface $\Sigma$ in a closed, 
orientable $3$-manifold $M$ 
(or a pair $(M, \Sigma)$) is \textit{dominated by} $\Sigma \times S^1$ if 
there exists a map $\phi:\Sigma \times S^1\to M$ such that 
$\phi|_{\Sigma \times \{ 0 \} } $ is an embedding with 
image $\Sigma \subset M$
and that the degree of $\phi$ is non-zero.}
\end{definition}

Clearly, the homomorphism $\deg : \mathcal{K}(M,\Sigma)\to\ZZ$ vanishes
if and only if 
$(M, \Sigma)$ is not dominated by $\Sigma \times S^1$.
 
We study the question of which Heegaard splitting $(M,\Sigma)$ is 
dominated by $\Sigma \times S^1$, and give a complete answer for the case where
$M$ is irreducible (Theorem \ref{thm:non-zero degree maps 1})
and a partial answer for the generic case (Theorem \ref{thm:non-zero degree maps 2}). 
In particular, we show that
if $M$ has an aspherical prime summand then
$(M,\Sigma)$ is not dominated by $\Sigma \times S^1$
for any Heegaard surface $\Sigma$ of $M$.
This guarantees 
the existence of the map $\Deg : \Mdy (M, \Sigma) \to \ZZ^2$
when $M$ has an aspherical prime summand, 
and Theorem \ref{thm:well-definedness of homological degree for aspherical manifolds}
is proved by using this fact.

We remark here that Theorems \ref{thm:non-zero degree maps 1}
and \ref{thm:non-zero degree maps 2} are
intimately related with the result of
Kotschick-Neofytidis \cite[Theorem 1]{KotschickNeofytidis13},
which says that a closed, orientable 3-manifold $M$ is 
dominated by 
a product $\Sigma \times S^1$ for some closed, 
orientable surface $\Sigma$ if and only if 
$M$ is finitely covered by either 
a product $F \times S^1$, for some aspherical surface $F$, 
or a connected sum 
$\#_g (S^2 \times S^1)$ for some 
non-negative integer $g$. 
(In \cite{KotschickNeofytidis13} and the present paper, we employ the
usual convention that the empty connected sum $\#_0 (S^2 \times S^1)$ represents $S^3$.) 
Thus part of our non-existence result for domination follows from their result.
However, the construction of dominating maps in Theorem \ref{thm:non-zero degree maps 2}
require more subtle arguments,
for we impose that the product $\Sigma \times S^1$ 
dominates not only the manifold $M$ itself but also the pair $(M, \Sigma)$.

\smallskip

In this paper, we also study incompressible surfaces in Haken manifolds.
In Theorem \ref{thm:Haken} and Corollary \ref{cor:Haken},
we completely describe the structures of their homotopy motion groups and related groups. 
The proof of that theorem is inspired by the work of 
Jaco-Shalen \cite{JacoShalen76} 
(see also \cite[Chapter VII]{Jaco80}),
where they introduced the concept of a {\it spatial deformation}
of a subset $\Sigma$ in the boundary of a manifold.
The concept of a homotopy motion is also regarded as a variation
of that of a spatial deformation.
As in \cite{JacoShalen76} and \cite[Chapter 5]{Jaco80},
the proof of Theorem \ref{thm:Haken} uses 
the covering spaces of compact $3$-manifolds corresponding to the surface fundamental groups.

The opposite case where $\Sigma$ is {\it homotopically trivial}, in the sense that
the inclusion map $j:\Sigma\to M$ is homotopic to the contant map, is 
studied as well (see Theorem \ref{thm:homotopy motion group for a local surface}).
In that case, we prove that if $M$ is aspherical then
$\Pi (M, \Sigma) \cong \pi_1(M) \times \MCG(\Sigma)$:
the factors $\pi_1(M)$ and $\MCG(\Sigma)$ correspond to
$\mathcal{K}(M,\Sigma)$ and $\Mdy(M,\Sigma)$, respectively.
Conversely, if $\Mdy(M,\Sigma)=\MCG(\Sigma)$ then $\Sigma$ is homotopically trivial
provided that $M$ is irreducible
(Corollary \ref{cor:homotopy motion group for a local surface}). 

Our interest in the group $\Mdy(M,\Sigma)$
has also its origin 
in the second author's observation in \cite[Addendum 1]{Sakuma81}
(cf. \cite{Brooks85, Montesinos87, HiroseKin20}),
called the virtual branched fibration theorem,
which says that, 
for every Heegaard surface $\Sigma$ of a closed, orientable $3$-manifold $M$,
there exists a double branched covering of $M$ 
which fibers over the circle, such that
the inverse image of $\Sigma$ is the union of two fiber surfaces. 
We show that this theorem is intimately related to the subgroup
$\langle \Mdy (V_1), \Mdy(V_2)\rangle$ of $\Mdy(M,\Sigma)$.
Let $\Inv(V_i)$ ($\subset \MCG(\Sigma)$)
be the set of torsion elements of $\Mdy(V_i)$. 
By slightly refining
the arguments
of Zimmermann \cite[Proof of Corollary 1.3]{Zimmermann79}, we can 
see that 
this is nothing but the set of vertical $I$-bundle involutions of $V_i$ (Lemma \ref{lem:I-bundle-Involution}). 
Here, a {\it vertical $I$-bundle involution} of a handlebody $V$ is an involution $h$ 
for which there exists an $I$-bundle structure of $V$
such that $h$ preserves each fiber setwise and acts on it as a reflection.
We then prove the following refinement of \cite[Addendum 1]{Sakuma81}.

\begin{citingtheorem}[Theorem  \ref{thm:vbf-theorem}] 
Let $M=V_1\cup_{\Sigma} V_2$ be a Heegaard splitting of 
a closed, orientable $3$-manifold $M$.
Then there exists a double branched covering $p:\tilde M \to M$
that satisfies the following conditions.
\begin{enumerate}
\renewcommand{\labelenumi}{$(\roman{enumi})$}
\item
$\tilde M$ is a surface bundle over $S^1$
whose fiber is homeomorphic to $\Sigma$.
\item
The preimage $p^{-1}(\Sigma)$ of the Heegaard surface $\Sigma$
is a union of two $($disjoint$)$ fiber surfaces.
\end{enumerate}
Moreover, the set $D(M,\Sigma)$ of monodromies of such bundles
is equal to the set $\{h_1 \circ h_2 \mid h_i\in \Inv(V_i)\}$,
up to conjugation and inversion.
\end{citingtheorem}

\subsection{Structure of the paper}

This paper is organized as follows.
In Section \ref{sec:The homotopy motion groups},
we give formal definitions of the homotopy motion group $\Pi(M,\Sigma)$,
its subgroup $\mathcal{K}(M,\Sigma)$ and its quotient group $\Mdy(M,\Sigma)$. 
In Section \ref{sec:Basic properties}, we present basic properties of these groups
for surfaces in $3$-manifolds. 
Section \ref{sec:The homotopy motion groups of incompressible surfaces in Haken manifolds}
is devoted to the case where $\Sigma$ is an incompressible surface in 
a Haken manifold $M$. 
Section \ref{sec:homotopically trivial case} 
treats the opposite case where $\Sigma$ is homotopically trivial.
The remaining sections are devoted to the case 
where $\Sigma$ is a Heegaard surface.
In Section \ref{sec:The homotopy motion groups of Heegaard surfaces and their friends},
we recall various natural subgroups of $\MCG(\Sigma)$
associated with a Heegaard surface, and describe their relationships with
the group $\Mdy(M,\Sigma)$. 
In Section \ref{sec:Open book rotations},
we consider the Heegaard splitting obtained from an open book decomposition,
and introduce two homotopy motions,
the half book rotation $\rho$ and the unilateral book rotation $\sigma$,
which play key roles in the proofs of the main theorems 
given in the succeeding 
two sections.
In Section \ref{sec:The group K(M,Sigma) for Heegard surfaces},
we study the problem of which Heegaard surface $(M, \Sigma)$ is dominated by
$\Sigma \times S^1$. 
In Section \ref{sec:Gap between Mdy(M,Sigma) and the subgroup Mdy (V_1), Mdy (V_2)}, 
we discuss gaps between $\Mdy(M,\Sigma)$ and the subgroup $\langle \Mdy (V_1), \Mdy (V_2) \rangle$, 
and 
prove Theorem \ref{thm:well-definedness of homological degree for aspherical manifolds}, which  
provides a partial answer to Question \ref{question:motivation}(1).
In Section \ref{sec:The virtual branched fibration theorem and the group Mdy (V_1), Mdy (V_2)},
we prove the branched fibration theorem (Theorem \ref{thm:vbf-theorem}), 
which gives another motivation for defining and studying 
the group $\Mdy(M,\Sigma)$.

\medskip
{\bf Acknowledgement.}
Part of this work was first announced at a zoom workshop held at RIMS
in May 2020, and it is summarized in the unrefereed conference paper \cite{KodaSakuma20a},
which includes brief outlines of some of the proofs. 
Both authors would like to thank the organizers, 
Tomotada Ohtsuki and Hirotaka Akiyoshi, 
for giving us the opportunity to announce the results in a difficult time. 
The second author would like to thank Ken Baker 
for pointing out the importance of book rotations \cite{Baker}
that motivated the definition of the group $\Mdy(M,\Sigma)$.
Both authors would like to thank Michel Boileau 
for his valuable suggestions \cite{Boileau} concerning 
nonzero degree maps from $\Sigma\times S^1$ to a closed, orientable $3$-manifold.
His suggestions were indispensable for the proof of
Theorems \ref{thm:non-zero degree maps 1}
and \ref{thm:non-zero degree maps 2}. 
Both authors would also like to thank
Norbert A'Campo, Jos\'e Maria Montesinos, and Christoforos Neofytidis
whose suggestions greatly improved this paper. 
The authors would like to thank the anonymous referee
for his or her valuable comments and suggestions 
that helped them to improve the exposition. 

\section{The homotopy motion groups}
\label{sec:The homotopy motion groups} 

Let $X$ and $Y$ be topological spaces. 
We denote by $C(X, Y)$ the space of continuous maps from $X$ to $Y$,
endowed with the compact-open topology.
For a subspace $A$ of $X$, we denote by 
$J(A, X)$ the subspace of $C(A, X)$ consisting of 
embeddings of $A$ into $X$ with image $A=j(A)$, where 
$j : A \to X$ is the inclusion map. 
For subspaces $A_1,\ldots,A_n$ of $X$, 
let 
$\Homeo(X,A_1,\ldots,A_n)$
denote the topological group of self-homeomorphisms of $X$ 
that preserves each $A_i$ ($1\le i\le n$). 
By $\MCG(X,A_1,\ldots,A_n)$ 
we mean the {\it mapping class group}
of $(X,A_1,\ldots,A_n)$, i.e., 
the group of connected components
of $\Homeo(X,A_1,\ldots,A_n)$.
We usually do not distinguish notationally between 
$f \in \Homeo(X,Y_1,\ldots,Y_n)$ 
and the element $[f]\in \MCG(X,A_1,\ldots,A_n)$ represented by $f$. 
Note that we allow orientation-reversing homeomorphisms when $X$ is an orientable manifold, 
so our $\MCG(X,A_1,\ldots,A_n)$ is what is often called the extended mapping class group.
A ``plus" symbol, as in $\MCG^+(X,A_1,\ldots,A_n)$, indicates 
the subgroup, of index $1$ or $2$,
consisting of the elements represented by orientation-preserving
homeomorphisms of $X$. 

{\it Throughout the paper, we identify $S^1$ with $\RR / \ZZ$. 
In our notation, we will not distinguish between an element of $S^1$ and 
its representative in $\RR$.}

\vspace{1em}

Let $\Sigma$ be a subspace of a manifold $M$,
and $j : \Sigma \to M$ the inclusion map. 
In this section, we first introduce formal definitions of the 
homotopy motion group 
$\Pi(M,\Sigma)$, its subgroup $\mathcal{K}(M,\Sigma)$, 
and the quotient group $\Gamma (M, \Sigma)$. 

Let $C (\Sigma, M)$ be the space of continuous maps from $\Sigma$ to $M$, 
and $J (\Sigma, M)$ the subspace of $C (\Sigma, M)$ consisting of embeddings 
of $\Sigma$ into $M$ with image $j (\Sigma)$. 
We call a path 
\[
\alpha:(I, \{ 1\}, \{ 0 \}) \to (C (\Sigma, M) , J(\Sigma, M) , \{j\} )
\]
a {\it homotopy motion} of $\Sigma$ in $M$. 
We call the maps $\alpha(0)$ and $\alpha(1)$ from $\Sigma$ to $M$
the {\it initial end} and the {\it terminal end}, respectively, 
of the homotopy motion. 
Two homotopy motions $\alpha, \beta: 
(I, \{ 1\}, \{ 0 \}) \to (C (\Sigma, M) , J(\Sigma, M) , \{j\} )$ 
are said to be {\it equivalent} 
if they are homotopic via a homotopy through maps of the same form. 
We remark that in that case, 
thinking of the codomains of the two maps $\alpha (1), \beta (1) \in J (\Sigma, M)$ as $\Sigma$, 
$\alpha (1)$ and $\beta (1)$ are isotopic as self-homeomorphisms of $\Sigma$.
When there is no fear of confusion,
we do not distinguish notationally between a homotopy motion
$\alpha$ and the element $[\alpha]$ of $\Pi (M, \Sigma)$ represented by $\alpha$. 

We define 
\[ \Pi (M, \Sigma) := \pi_1 (C (\Sigma, M), J(\Sigma, M) , j)\]
to be the set of equivalence classes of homotopy motions, 
as usual in the definition of relative homotopy groups 
$\pi_n (X, A, x_0)$ for $x_0 \in A \subset X$, where $X$ is a topological space. 
We equip $\Pi (M, \Sigma)$ with a group structure 
as in the following way. 
 
Let $\alpha$ and $\beta$ be homotopy motions. 
Then the {\it concatenation} 
\[\alpha \cdot \beta : 
(I, \{ 1\}, \{ 0 \}) \to (C (\Sigma, M) , J(\Sigma, M) , \{j\} )\] 
of them is defined by 
\[ 
\alpha \cdot \beta (t) = 
\left\{ 
\begin{array}{ll} 
\alpha (2t) & (0 \leq t \leq 1/2) \\
\beta (2t-1) \circ \alpha(1) & (1/2 \leq t \leq 1) .
\end{array}
\right.
\]
We can easily check that the concatenation naturally induces a product of 
elements of $\pi_1 (C (\Sigma, M), J(\Sigma, M))$. 
The {\it identity motion} 
\[e : (I, \{ 1\}, \{ 0 \}) \to (C (\Sigma, M) , J(\Sigma, M) , \{j\} )\]
defined by $e (t) = j$ ($t \in I$) represents the identity element of 
$\Pi (M, \Sigma)$. 
The {\it inverse} $\bar{\alpha}$ of a homotopy motion $\alpha$ is defined by 
\[ \bar{\alpha} (t) = \alpha (1-t) \circ \alpha(1)^{-1},  \]
where we regard $\alpha(1)$ as a self-homeomorphism of $\Sigma$,
and $\alpha(1)^{-1}$ denotes its inverse.
Then the inverse of $[\alpha]$ in the group
$\pi_1 (C (\Sigma, M), J(\Sigma, M))$ is given by $[\bar{\alpha}]$.

\begin{definition}
\label{def:homotopy motion group}
{\rm
We call the group $\Pi (M, \Sigma)$ 
the {\it homotopy motion group} of $\Sigma$ in $M$. 
}
\end{definition}

\begin{remark}
\label{rem:homotopy motion group and the fundamental group}
{\rm
When $\Sigma$ is a single point $x_0$, $\Pi (M, \Sigma)$ is nothing 
but the fundamental group $\pi_1 (M, x_0)$ of $M$. 
Thus, the group $\Pi (M, \Sigma)$ is a sort of generalization of $\pi_1 (M, x_0)$. 
See also Theorem \ref{thm:homotopy motion group for a local surface} below. 
}
\end{remark}

\begin{notation}
\label{notation:homotopy motion}
{\rm 
For a homotopy motion 
\[\alpha:(I, \{ 1\}, \{ 0 \}) \to (C (\Sigma, M) , J(\Sigma, M) , \{j\} )\]
we employ the following notation.
\begin{enumerate}
\item
We occasionally regard $\alpha$ as a continuous map $\Sigma\times I \to M$
defined by $\alpha(x,t)=\alpha(x)(t)$ 
(cf. \cite[Theorem 6.5]{DavisKirk01}, \cite[Introduction 1.9]{Spanier66}). 
\item
When we regard $\alpha$ as a continuous family of maps,
we occasionally write $\alpha=\{f_t\}_{t\in I}$ where $f_t=\alpha(t):\Sigma\to M$.
\item
When $\alpha$ is a closed path, i.e., $\alpha(1)=\alpha(0)=j$,
$\alpha$ induces a continuous map 
$\Sigma\times S^1 \to M$, which we denote by $\hat\alpha$,
that sends $(x,t)\in \Sigma\times S^1$ to $\alpha(t)(x)=\alpha(x,t)$. 
The homotopy class of this map relative to $\Sigma\times\{0\}$ 
is uniquely determined by the element $[\alpha]\in \pi_1 ( C (\Sigma , M ) , j)$.
\end{enumerate}
}
\end{notation}

Since the inclusion map $j$ is nothing but the identity if we think of the codomain of $j$ as $\Sigma$, 
$J (\Sigma, M)$ can be canonically identified with $\Homeo (\Sigma)$. 
Thus, the terminal end $\alpha (1) = f_1$ of a homotopy motion $\alpha = \{ f_t \}_{t \in I} $ can be 
regarded as an element of $\Homeo (\Sigma)$. 
Therefore, we obtain a map 
\[ \partial_+ : \Pi (M, \Sigma) \to \MCG(\Sigma) \]
by taking the equivalence class of a homotopy motion $\alpha = \{f_t\}_{t \in I}$ to 
the mapping class of $\alpha (1) = f_1 \in \Homeo (\Sigma)$. 
Clearly, this map is a homomorphism. 
(To be precise, this holds when we think of $\Homeo (\Sigma)$ as acting on $\Sigma$ 
from the right: under the usual convention where $\Homeo (\Sigma)$ acts on $\Sigma$
from the left, which we employ in this paper,
the map $\partial_+$ is actually an anti-homomorphism.) 
\begin{definition}
\label{def:monodromy group}
{\rm  
We denote the image of $\partial_+$ by $\Gamma (M, \Sigma)$.
Namely,
$\Mdy(M,\Sigma)$ is 
the subgroup of the 
mapping class group $\MCG(\Sigma)$
defined by
\begin{align*}
\Mdy(M,\Sigma)
&=
\{[f]\in \MCG(\Sigma) \mid 
\mbox{There exists a homotopy motion $\{f_t\}_{t\in I}$ with $f=f_1$.}\}\\
&=
\{[f]\in \MCG(\Sigma) \mid 
\mbox{$j\circ f:\Sigma\to M$ is homotopic to the inclusion map $j$.}\}.
\end{align*} 
The kernel of $\partial_+$ is denoted by $\mathcal{K} (M, \Sigma)$: thus we have
the exact sequence (\ref{exact sequence for homotopy motion group}) in the introduction.
}
\end{definition}

\section{Basic properties of homotopy motion groups
of surfaces in $3$-manifolds.}
\label{sec:Basic properties}

In this section, we provide a few basic properties concerning
the groups defined in the above for surfaces in $3$-manifolds, by using 
elementary arguments in homotopy theory.
{\it Throughout the remainder of this paper,
$\Sigma$ denotes a connected, closed, orientable surface embedded in
a connected, orientable $3$-manifold 
$M$, unless otherwise stated,
and $j : \Sigma \to M$ denotes the inclusion.
When we mention the degrees of maps,
we assume that the surfaces and $3$-manifolds are endowed with (suitable) orientations.
}

Note that we have the following long exact sequence. 
\begin{align*}
  \cdots &\to \pi_1 (J(\Sigma, M), j) \xrightarrow{\mathscr{I}} \pi_1 (C(\Sigma, M), j) \to \pi_1 (C(\Sigma, M), J (\Sigma, M), j) \\
  &\to 
\pi_0 (J (\Sigma, M)) \to \pi_0 (C (\Sigma, M)).
\end{align*}
Here, $\mathscr{I}: \pi_1 (J(\Sigma, M), j) \to \pi_1 (C(\Sigma, M), j)$ is the map 
induced from the inclusion $(J(\Sigma, M), j) \hookrightarrow (C(\Sigma, M), j)$.
The boundary map 
$\pi_1 (C(\Sigma, M), J (\Sigma, M), j) \to \pi_0 (J (\Sigma, M))$
respects the group structures of $\Pi(M,\Sigma)=\pi_1 (C(\Sigma, M), J (\Sigma, M), j)$
and $\MCG(\Sigma)=\pi_0 (J (\Sigma, M))$,
and it is identical with the (anti-)homomorphism $\partial_+$.  
Thus we have the following description of the kernel $\mathcal{K} (M, \Sigma)$.

\begin{lemma}
\label{lem:monodromy group and kernel group}
We have the isomorphism
\[ \mathcal{K} (M, \Sigma) \cong \pi_1 ( C (\Sigma , M) , j) / \mathscr{I} ( \pi_1 ( J (\Sigma, M ) , j) ).  \]
Moreover, if the genus of $\Sigma$ is at least $2$, then we have
\[ \mathcal{K} (M, \Sigma) \cong \pi_1 ( C (\Sigma , M ) , j) .  \]
\end{lemma}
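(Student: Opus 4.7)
The plan is to unpack directly the long exact sequence displayed just before the statement, and identify its maps with the structural homomorphisms built into $\Pi(M,\Sigma)$.

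First I would identify $\Pi(M,\Sigma)=\pi_1(C(\Sigma,M), J(\Sigma,M), j)$ and verify that the connecting map $\pi_1(C(\Sigma,M), J(\Sigma,M), j)\to \pi_0(J(\Sigma,M))$ in the long exact sequence sends a homotopy motion $\alpha=\{f_t\}_{t\in I}$ to the path component of its terminal end $f_1$ in $J(\Sigma,M)\cong \Homeo(\Sigma)$, that is, to $[f_1]\in \MCG(\Sigma)$. This is precisely the (anti-)homomorphism $\partial_+$. By exactness at $\Pi(M,\Sigma)$, the kernel $\mathcal{K}(M,\Sigma)=\ker\partial_+$ coincides with the image of $\pi_1(C(\Sigma,M),j)\to \Pi(M,\Sigma)$. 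Exactness one further step to the left identifies the kernel of this latter map with $\mathscr{I}(\pi_1(J(\Sigma,M),j))$, which is normal because the map into $\Pi(M,\Sigma)$ is a group homomorphism with respect to the concatenation product defined in the paper. Passing to the quotient yields the first isomorphism.

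For the second isomorphism it suffices to show that $\pi_1(J(\Sigma,M), j)$ is trivial when the genus of $\Sigma$ is at least $2$. Using the canonical identification $J(\Sigma,M)\cong \Homeo(\Sigma)$ under which $j$ corresponds to $\id_\Sigma$, we have $\pi_1(J(\Sigma,M), j)\cong \pi_1(\Homeo(\Sigma), \id_\Sigma)=\pi_1(\Homeo_0(\Sigma))$, since a loop based at $\id_\Sigma$ lies in its own path component. By the classical result of Hamstrom (with the smooth counterpart by Earle--Eells), the identity component $\Homeo_0(\Sigma)$ of a closed orientable surface of genus at least $2$ is contractible, so $\pi_1(J(\Sigma,M), j)=1$, and the first isomorphism specialises to $\mathcal{K}(M,\Sigma)\cong \pi_1(C(\Sigma,M),j)$.

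The only mildly delicate point is the bookkeeping needed to reconcile the long exact sequence of the pair $(C(\Sigma,M), J(\Sigma,M))$, which a priori lives in the category of pointed sets with group structure on the higher $\pi_n$, with the concatenation product defined in the paper on $\Pi(M,\Sigma)$; one must check that the map $\pi_1(C(\Sigma,M), j)\to \Pi(M,\Sigma)$ is indeed a group homomorphism for this product so that image/kernel reasoning is legitimate. This is a straightforward unwinding of the definitions and does not present a serious obstacle; the substantive input beyond formal homotopy theory is the vanishing of $\pi_1(\Homeo_0(\Sigma))$ for $g\geq 2$.
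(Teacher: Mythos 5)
Your proposal is correct and follows essentially the same route as the paper: the paper also deduces the first isomorphism directly from the long exact sequence of the pair $(C(\Sigma,M), J(\Sigma,M))$ displayed just before the lemma, and obtains the second from the identification $J(\Sigma,M)\cong\Homeo(\Sigma)$ together with Hamstrom's theorem that $\pi_1(J(\Sigma,M),j)$ is trivial for genus at least $2$. Your extra care in checking that $\pi_1(C(\Sigma,M),j)\to\Pi(M,\Sigma)$ respects the concatenation product is a point the paper leaves implicit, but it is the same argument.
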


\begin{proof}
The first assertion is a direct consequence of the exact sequence.
The second assertion follows from the fact that 
$J (\Sigma, M)$ can be canonically identified with $\Homeo (\Sigma)$ as discussed before,
and the result of Hamstrom \cite{Hamstrom66}
that $\pi_1 (J(\Sigma, M), j)$ is 
the trivial group when the genus of $\Sigma$ is at least $2$. 
\end{proof}

As noted in Notation \ref{notation:homotopy motion}(3),
a closed path
$\alpha:(I, \partial I) \to (C (\Sigma, M), \{j\})$
determines a continuous map $\hat\alpha:\Sigma\times S^1 \to M$
whose homotopy class is uniquely determined by the element 
$[\alpha]\in \pi_1 ( C (\Sigma , M) , j)$.
Thus we have a well-defined map $[\alpha]\mapsto \deg(\hat\alpha)$  
from $\pi_1 ( C (\Sigma , M) , j)$ to $\ZZ$,
which is obviously a homomorphism.
If $[\alpha]$ belongs to the subgroup $\mathscr{I} ( \pi_1 ( J (\Sigma, M ) , j) )$,
then $\hat\alpha$ is homotopic to a map with 
$\image(\hat\alpha)=\Sigma\times\{0\}$, and therefore
$[\alpha]$ belongs to the kernel of the homomorphism.
Hence it descends to a homomorphism 
$\mathcal{K} (M, \Sigma)\cong \pi_1 ( C (\Sigma , M ) , j)/ \mathscr{I} ( \pi_1 ( J (\Sigma, M ) , j) ) \to \ZZ$.

\begin{definition}
\label{def:degree-KernelGroup}
{\rm
We denote by 
$\deg : \mathcal{K} (M, \Sigma)\to \ZZ$
the homomorphism defined by 
\[
\deg([\alpha])=\deg(\hat\alpha:\Sigma\times S^1 \to M),
\]
where $\alpha:(I, \partial I) \to (C (\Sigma, M), \{j\})$
is a closed path.
We call $\deg([\alpha])$ the {\it degree} of 
the element $[\alpha]\in \mathcal{K} (M, \Sigma)$.
}
\end{definition}

\medskip

In order to prove further basic properties of the homotopy motion groups,  
we recall a few results from classical obstruction theory.

\begin{definition}
{\rm
Let $X$ and $Y$ be arcwise-connected topological spaces, 
and let $\theta_i:\pi_1(X,x_0) \to \pi_1(Y, y_i)$ ($i=0,1$)
be homomorphisms, where $x_0\in X$ and $y_0,y_1\in Y$.
Then we say that $\theta_0$ and $\theta_1$ are {\it equivalent}
if there is a path $u:(I,0,1)\to (Y,y_0, y_1)$
such that $\theta_1 =\iota_u\circ \theta_0$
where $\iota_u:\pi_1(Y,y_0) \to \pi_1(Y,y_1)$ 
is the isomorphism induced by $u$. 
In the case where $y_0=y_1$,
$\iota_u$ is the inner-automorphism induced by $[u]\in \pi_1(Y,y_0)$;
so, we say that 
$\theta_0$ and $\theta_1$ are {\it conjugate} 
if they are equivalent.
}
\end{definition}

When we are concerned with the equivalence class of a homomorphism
$\theta:\pi_1(X,x_0)\to \pi_1(Y,y_0)$,
the choice of base point does not matter. 
So, we often drop the description of the base points and
denote the homomorphism by $\theta:\pi_1(X)\to \pi_1(Y)$.

\begin{proposition}
\label{prop:obstruction1}
Let $X$ be a connected $n$-dimensional CW-complex
and $Y$ an arcwise connected topological space.

{\rm (1)} Suppose $\pi_r(Y)=0$ for every $r$ with $1<r<n$. 
Then any homomorphism 
$\theta:\pi_1(X)\to \pi_1(Y)$
is realized by a continuous map, namely there is a continuous map
$f:X\to Y$ such that $f_*$ is equivalent to $\theta$.
To be precise, 
if $\theta$ is a homomorphism from 
$\pi_1(X,x_0)$ to $\pi_1(Y, y_0)$
$($$x_0\in X$, $y_0\in Y$$)$,
then there is a continuous map
$f:(X,x_0)\to (Y,y_0)$ such that $f_*=\theta$.

{\rm (2)} Suppose $\pi_r(Y)=0$ for every $r$ with $1<r \leq n$.  
Then two continuous maps $f_0$ and $f_1:X\to Y$ are homotopic
if and only if 
they induce equivalent homomorphisms between the fundamental groups.
\end{proposition}

\begin{proof}
(1) It is obvious that any homomorphism $\theta$ is realized by a 
continuous map from the $2$-skeleton of $X$ to $Y$. 
For $r$ with $1<r<n$, the condition $\pi_r(Y)=0$ guarantees that
any continuous map from the $r$-skeleton of $X$ to $Y$
extends over the $(r+1)$-skeleton (see \cite [Theorem 7.1(1)]{DavisKirk01}).
By applying this fact inductively, we obtain (1).

(2) This is obtained by a similar inductive argument 
by using \cite [Theorem 7.12]{DavisKirk01} (cf. \cite[Theorem 25.3]{Olum50}).
\end{proof}

We also need the following relative version of 
Proposition \ref{prop:obstruction1}(1).

\begin{proposition}
\label{prop:obstruction2}
Let $(X,X_0)$ be a relative CW-complex of dimension $n$
$($i.e., $X$ is a topological space obtained from $X_0$ by attaching cells of 
dimension $\le n$$)$, and let $x_0\in X_0$.
Let $f:(X_0,x_0)\to (Y,y_0)$ be a continuous map to an arcwise connected topological space $Y$
with base point $y_0$.
Suppose $\pi_r(Y)=0$ for every $r$ with $1<r<n$.
Then $f$ extends to a continuous map from $X$ to $Y$
if and only if there is a homomorphism
$\theta:\pi_1(X,x_0)\to \pi_1(Y,y_0)$
such that $f_*=\theta\circ i_*:\pi_1(X_0,x_0)\to \pi_1(Y, y_0)$
where $i:X_0\to X$ is the inclusion map.
\end{proposition}

\begin{proof}
This can be proved by an inductive argument using \cite[Theorem 25.1]{Olum50}.
\end{proof}

The following result,
which is a consequence of
\cite[Theorems IIa and IIb]{Olum53b}, 
refines  Proposition \ref{prop:obstruction1}(2)
in the case where $X$ and $Y$ are closed, orientable $n$-manifolds.

\begin{proposition}
\label{prop:obstruction3}
Let $X$ and $Y$ be connected, closed, oriented $n$-manifolds,
and assume that $\pi_r(Y)=0$ for every $r$ with $1<r<n$
and that $\pi_1(Y)$ is finite.
Then two continuous maps $f_0$ and $f_1:X \to Y$ are homotopic
if and only if the homomorphisms $(f_i)_*:\pi_1(X) \to \pi_1(Y)$ $(i=0,1)$
are equivalent and $\deg (f_0)= \deg (f_1)$.

Moreover, for a given homomorphism $\theta:\pi_1(X)\to \pi_1(Y)$,
the set of $\deg (f)$, where $f:X\to Y$ runs over the continuous maps 
such that $f_*:\pi_1(X)\to \pi_1(Y)$ is equivalent to $\theta$,
is of the form $d+|\pi_1(Y)|\cdot \ZZ$ for some $d \in \ZZ$.
\end{proposition}

\medskip
Now we state two basic properties 
(Lemmas \ref{lem:monodromy-fundgp} and 
\ref{lem:classification of pi1(C(S,M), j) for an aspherical manifold}) 
of the homotopy motion groups and related groups,
which are obtained by using the above results.
The following lemma gives a characterization of 
the group $\Mdy (M, \Sigma)$ in terms of the induced homomorphisms 
between the fundamental groups. 

\begin{lemma}
\label{lem:monodromy-fundgp}
Let $\Sigma$ be a closed, orientable surface embedded in
a $3$-manifold $M$, and let $f$ be a self-homeomorphism of $\Sigma$.
Then the following hold.

\begin{enumerate}\renewcommand{\labelenumi}{$(\arabic{enumi})$}
\item 
If the mapping class $[f] \in \MCG(\Sigma)$ 
belongs to the subgroup $\Mdy(M,\Sigma)$,
then the homomorphism $(j\circ f)_*:\pi_1(\Sigma) \to \pi_1(M)$
is equivalent to the homomorphism $j_*$.
\item
Suppose $M$ is irreducible. Then the converse to the above also holds.
\end{enumerate}
\end{lemma}

\begin{proof}
The first assertion is obvious from the definition of $\Mdy(M,\Sigma)$.
To prove the second assertion, assume that $M$ is irreducible
and let $[f]\in \MCG(\Sigma)$ be a mapping class 
such that $(j\circ f)_*$ is equivalent to $j_*$.
Then by the sphere theorem we have $\pi_2(M)=0$.
So, we can apply Proposition \ref{prop:obstruction1}(2)  
to show that $j\circ f$ is homotopic to $j$.
Hence $[f]$ belongs to $\Mdy(M,\Sigma)$.
\end{proof}

Now fix a base point $x_0 \in \Sigma \subset M$,
and consider
a closed path 
$\alpha:(I, \partial I) \to (C (\Sigma, M), \{j\})$.
Let $w$ be the element of 
$\pi_1(\Sigma\times S^1, (x_0,0))=\pi_1(\Sigma,x_0)\times \pi_1(S^1,0)$
representing the canonical generator of $\pi_1(S^1,0)$.
Then $\hat\alpha_*(w)$ belongs to the centralizer 
$Z(j_*(\pi_1(\Sigma), x_0),\pi_1(M, x_0))$
of $j_*(\pi_1(\Sigma , x_0))$ in $\pi_1(M, x_0)$,
and it is represented by the based loop $t \mapsto \alpha (t) (x_0)$.
Since the homotopy class of $\hat\alpha : \Sigma \times S^1 \to M $ relative to $(x_0,0)$
is uniquely determined by $[\alpha] \in \pi_1(C (\Sigma, M), j )$,
we obtain the following homomorphism.

\begin{definition}
\label{def:classification of pi1(C(S,M), j) for an aspherical manifold}
We denote by $\Phi$ the homomorphism
\[
\Phi: \pi_1(C (\Sigma, M), j ) \to Z(j_*(\pi_1(\Sigma,x_0)),\pi_1(M, x_0)), \
\Phi([\alpha]) = [u],
\]
where $\alpha:(I, \partial I) \to (C (\Sigma, M), \{j\})$ and 
$u: (I, \partial I) \to (M, \{x_0\}), ~u(t)=\alpha (t) (x_0)$.
\end{definition}

The next lemma 
plays important roles in the proofs of 
Theorems \ref{thm:Haken}, \ref{thm:homotopy motion group for a local surface}  and
\ref{thm:non-zero degree maps 1}.

\begin{lemma}
\label{lem:classification of pi1(C(S,M), j) for an aspherical manifold}
Let $\Sigma$ be a closed, orientable surface embedded in a 
$3$-manifold $M$, and $x_0\in\Sigma$.
Then the following hold.
\begin{enumerate}\renewcommand{\labelenumi}{$(\arabic{enumi})$}
\item
If $M$ is irreducible, then $\Phi$ is surjective.
\item
If $M$ is aspherical, then $\Phi$ is injective.
\end{enumerate}
\end{lemma}

\begin{proof}
(1)
Assume that $M$ is irreducible, and let $[u]$ be an element of 
$Z(j_*(\pi_1(\Sigma, x_0)),\allowbreak \pi_1(M, x_0))$.
Consider the pair 
$(X,X_0)=(\Sigma\times I, \Sigma\times\partial I \cup \{ x_0 \}  \times I)$
and the map $F:X_0\to M$ defined by
$F(x,0)=F(x,1)=x$ ($x\in \Sigma$) and 
$F(x_0,t)=u(t)$ ($t\in I$).
Put $\hat x_0=(x_0,0) \in X$ and let
$\theta:\pi_1(X,\hat x_0)\to \pi_1(M, x_0)$ be the homomorphism induced by
$j\circ p:(X,\hat x_0)\to (M, x_0)$ where
$p:X=\Sigma\times I \to \Sigma$ is the projection.
Then, since $u$ represents an element of $Z(j_*(\pi_1(\Sigma,x_0)),\pi_1(M, x_0))$,
we see that $F_*:\pi_1(X_0,\hat x_0) \to \pi_1(M, x_0)$
is identical with $\theta\circ i_*$, 
where $i:X_0 \to X$ is the inclusion. See Figure \ref{fig_extension}. 
\begin{figure}[htbp]
\centering\includegraphics[width=13.5cm]{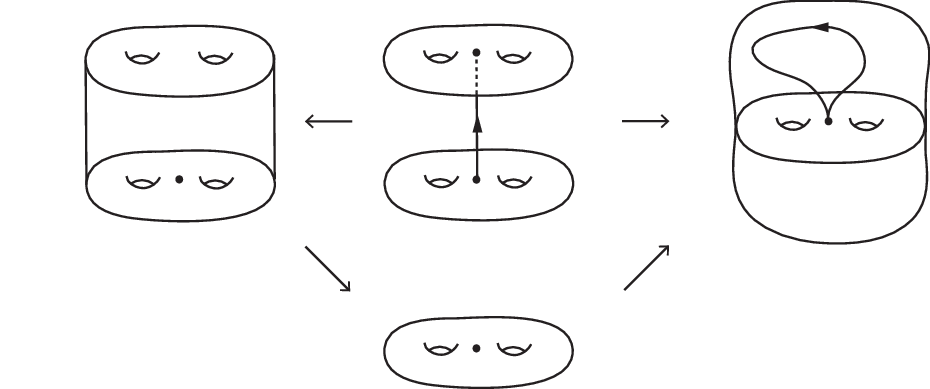}
\begin{picture}(400,0)(0,0)
\put(0,146){$\Sigma \times \{1\}$}
\put(0,94){$\Sigma \times \{0\}$}
\put(67,69){$\Sigma \times I$}
\put(77,88){$\hat{x}_0$}
\put(141,129){$i$}
\put(198,69){$X_0$}
\put(200,88){$\hat{x}_0$}
\put(208,119){$\{ x_0 \} \times I$}
\put(268,129){$F$}
\put(133,54){$p$}
\put(200,20){$x_0$}
\put(200,0){$\Sigma$}
\put(276,54){$j$}
\put(344,95){$\Sigma$}
\put(343,60){$M$}
\put(345,114){$x_0$}
\put(365,153){$u$}
\end{picture} 
\caption{The maps $F$, $i$ and $p$.}
\label{fig_extension}
\end{figure}
Since $\pi_2(M)=0$ by the irreducibility of $M$,
we see by Proposition \ref{prop:obstruction2}
that the map $F$ extends over $X=\Sigma\times I$.
The resulting map $F:\Sigma\times I \to M$ determines a closed path, $\alpha$, in $C (\Sigma, M)$
based on $j$,
and the image of $[\alpha]\in \pi_1(C (\Sigma, M), j )$
by $\Phi$ is equal to $[u]$.
Hence $\Phi$ is surjective.

(2) 
Assume that $M$ is aspherical, 
and let $[\alpha]$ be an element of $\pi_1(C (\Sigma, M), j )$ 
which is contained in $\ker \Phi$. 
Consider the maps $\hat \alpha$ and $\hat e:\Sigma\times S^1 \to M$
induced by $\alpha$ and the identity motion $e$.
Since $[\alpha]\in \ker \Phi$,
the homomorphisms $\hat\alpha_*$ and
$\hat e_*:\pi_1(\Sigma\times S^1)\to \pi_1(M)$ are equivalent
(in fact, identical).
By Proposition \ref{prop:obstruction1}(2),
this implies that
$\hat \alpha$ and $\hat e$ are homotopic,
for $M$ is aspherical. 
Hence $[\alpha]$ is conjugate to the identity element $[e]$
in $\pi_1(C (\Sigma, M), j )$. Hence $[\alpha]=[e] \in \pi_1(C (\Sigma, M), j)$.
\end{proof}

\section{The homotopy motion groups of incompressible surfaces in Haken manifolds}
\label{sec:The homotopy motion groups of incompressible surfaces in Haken manifolds}

In this section, we consider the 
groups $\Pi (M, \Sigma)$ and $\Mdy(M, \Sigma)$ 
in the case where $\Sigma$ is a closed, orientable, incompressible surface
in a closed, orientable Haken manifold $M$. 
Let us begin with two examples 
of non-trivial elements of $\Pi (M, \Sigma)$. 
We will see soon in Theorem \ref{thm:Haken} 
that they are in fact the only elements necessary to generate 
$\Pi (M, \Sigma)$. 

\begin{example}
\label{ex:fiber-surface}
{\rm
Let $\varphi$ be an element of $\MCG(\Sigma)$. 
Consider the 3-manifold $M:=\Sigma\times\RR/(x,t)\sim (\varphi(x),t+1)$, 
which is the $\Sigma$-bundle over $S^1$ with monodromy $\varphi$.
We denote the image of $\Sigma\times \{0 \} $ in $M$
by the same symbol $\Sigma$ and call it a {\it fiber surface}.
Then we have a natural homotopy motion $\lambda = \{f_t\}$ of $\Sigma$ in $M$
defined by $f_t(x)=[x,t]$,
where  $[x,t]$ is the element of $M$ represented by $(x,t)$ 
(see Figure \ref{fig_incompressible}(i)). 
Its terminal end is equal to $\varphi^{-1}$,
because $f_1(x)=[x,1]=[\varphi^{-1}(x),0]=\varphi^{-1}(x)$.
Thus $\varphi$ belongs to $\Mdy(M,\Sigma)$.
}
\end{example}

\begin{figure}[htbp]
\centering\includegraphics[width=11cm]{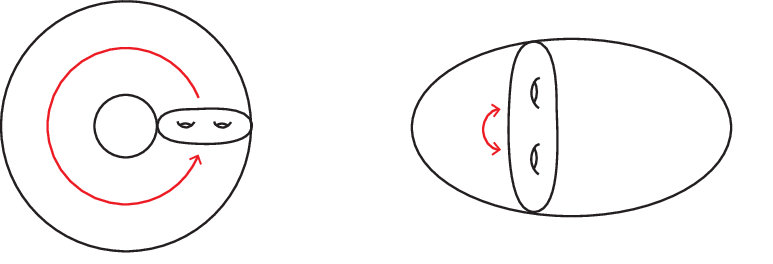}
\begin{picture}(400,0)(0,0)
\put(86,0){(i)}
\put(265,0){(ii)}
\put(147,66){$\Sigma$}
\put(50,66){\color{red}$\lambda$}
\put(228,64){\color{red}$\mu$}
\put(86,23){$M$}
\put(255,107){$\Sigma$}
\put(290,64){$N'$}
\put(213,64){$N$}
\put(267,20){$M$}
\end{picture} 
\caption{(i) The homotopy motion $\lambda$. (ii) The homotopy motion $\mu$.} 
\label{fig_incompressible}
\end{figure}

\begin{example}
\label{ex:twisted-bundle}
{\rm
Let $h$ be an orientation-reversing free involution of 
a closed, orientable surface $\Sigma$. 
Consider the 3-manifold 
$N:= \Sigma\times [0,1]/(x,t)\sim (h(x),1-t)$, which is 
the orientable twisted $I$-bundle over the closed, non-orientable surface $\Sigma/h$.
The boundary $\partial N$ is identified with $\Sigma$
by the homeomorphism $\Sigma\to\partial N$ mapping $x$ to $[x,0]$,
where $[x,t]$ denotes the element of $N$ represented by $(x,t)$.
Then we have a natural homotopy motion $\mu = \{f_t\}_{t \in I}$ of $\Sigma=\partial N$ in $N$,
defined by 
$f_t(x)= [x, t]$. 
Its terminal end is equal to $h$,
because 
$f_1(x)=[x,1]=[h(x),0]=h(x)$ 
for every $x\in\Sigma=\partial N$.
Let $N'$ be any compact, orientable $3$-manifold 
whose boundary is identified with $\Sigma$,
i.e., a homeomorphism $\partial N'\cong \Sigma$ is fixed,
and let $M=N\cup N'$ be the closed, orientable $3$-manifold
obtained by gluing $N$ and $N'$ along the common boundary $\Sigma$.
Then the homotopy motion $\mu = \{f_t\}_{t \in I}$ of $\Sigma$ in $N$ defined as above 
can be regarded as 
that of $\Sigma$ in $M$, and thus 
$h$ is an element of $\Mdy(M,\Sigma)$ 
(see Figure \ref{fig_incompressible}(ii)). 
 If $N'$ is also a twisted $I$-bundle associated with 
 an orientation-reversing involution $h'$ of $\Sigma$, 
 then we have another homotopy motion $\mu'$ of $\Sigma$ in $N'$ 
 with terminal end $h' \in \Mdy(M,\Sigma)$.
}
\end{example}

The following theorem is proved by using the positive solution
of Simon's conjecture \cite{Simon76} concerning manifold compactifications
of covering spaces, with finitely generated fundamental groups,
of compact $3$-manifolds,
which in turn is proved by using the geometrization theorem 
established by Perelman 
\cite{Perelman02, Perelman03a, Perelman03b}
and the tameness theorem of hyperbolic manifolds
established by Agol \cite{Agol04} and Calegari-Gabai \cite{CalegariGabai06}
(see also Soma \cite{Soma06} and Bowditch \cite{Bowditch10}).
A proof of Simon's conjecture can be found in Canary's expository article
\cite[Theorem 9.2]{Canary08},
where he attributes it to Long and Reid.

\begin{theorem}
\label{thm:Haken}
Let $M$ be a closed, orientable Haken manifold, 
and suppose that $\Sigma$ is 
a closed, orientable, incompressible surface in $M$.
Then the following hold.
\begin{enumerate}
\renewcommand{\labelenumi}{$(\arabic{enumi})$}
\item
If $M$ is a $\Sigma$-bundle over $S^1$ with monodromy $\varphi$ 
and $\Sigma$ is a fiber surface, then 
$\Pi (M,\Sigma)$ is the infinite cyclic group 
generated by the homotopy motion $\lambda$ 
described in Example $\ref{ex:fiber-surface}$.
\item
If $\Sigma$ separates $M$ into two submanifolds, $M_1$ and $M_2$,
precisely one of which is a twisted $I$-bundle, then 
$\Pi (M,\Sigma)$ is the order-$2$ cyclic group
generated by the homotopy motion $\mu$ 
described in Example $\ref{ex:twisted-bundle}$. 
\item
If $\Sigma$ separates $M$ into two submanifolds, $M_1$ and $M_2$,
both of which are twisted $I$-bundles, then 
$\Pi (M,\Sigma)$ is the 
infinite dihedral group
generated by the homotopy motions $\mu$ and $\mu'$ 
described in Example $\ref{ex:twisted-bundle}$.  
\item
Otherwise, $\Pi  (M,\Sigma)$ is the trivial group. 
\end{enumerate}
\end{theorem}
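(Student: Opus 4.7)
The plan is to pass to the covering $p\colon \tilde M \to M$ corresponding to $H:=j_*\pi_1(\Sigma,x_0) \le G:=\pi_1(M,x_0)$, apply the positive solution of Simon's conjecture to identify $\tilde M$ with the interior of a compact model $\bar M$, and then show that a homotopy motion is essentially determined by which deck translate of the canonical lift $\tilde\Sigma$ the terminal end lands on. Since $\Sigma$ is incompressible of positive genus, $H\cong\pi_1(\Sigma)$ is infinite; combined with irreducibility of the Haken manifold $M$ and the sphere theorem this makes $M$ aspherical, so Lemmas \ref{lem:monodromy-fundgp} and \ref{lem:classification of pi1(C(S,M), j) for an aspherical manifold} apply. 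Let $\tilde j\colon \tilde\Sigma \hookrightarrow \tilde M$ be the lift of $j$ at $x_0$; then $\tilde j_*$ is an isomorphism $\pi_1(\tilde\Sigma)\xrightarrow{\cong}\pi_1(\tilde M)=H$. By the homotopy lifting property, each homotopy motion $F=\{f_t\}$ in $M$ with $f_0=j$ lifts uniquely to $\tilde F=\{\tilde f_t\}$ with $\tilde f_0=\tilde j$, and equivalent homotopy motions have equivalent lifts.

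Since $H$ is finitely generated, the positive solution of Simon's conjecture \cite[Theorem 9.2]{Canary08} provides a compact $3$-manifold $\bar M$ whose interior is homeomorphic to $\tilde M$. Because $\tilde\Sigma$ is a closed incompressible surface carrying the full fundamental group of the aspherical manifold $\tilde M$, the standard classification of bounded aspherical $3$-manifolds with surface fundamental group forces $\bar M$ to be homeomorphic either to $\Sigma\times[0,1]$ or to an orientable twisted $I$-bundle whose base is a closed non-orientable surface with orientation double cover $\Sigma$. The image of the terminal end $\tilde f_1$ is a closed incompressible surface in $\tilde M$, and in $\bar M$ it is necessarily parallel to $\tilde\Sigma$; hence it is a translate $g\cdot\tilde\Sigma$ for a unique deck transformation $g\in D:=N_G(H)/H$. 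Setting $\Theta([F]):=g$ yields an anti-homomorphism $\Theta\colon \Pi(M,\Sigma)\to D$. Analyzing $D$ via the splitting of $G$ determined by $\Sigma$ (an amalgamated product when $\Sigma$ separates, an HNN extension otherwise) gives $D\cong \ZZ$ in case~(1), $D\cong \ZZ/2$ in case~(2), $D\cong \ZZ/2 \ast \ZZ/2$ in case~(3), and $D=1$ in case~(4); the model motions $\lambda$, $\mu$, $\mu'$ of Examples \ref{ex:fiber-surface} and \ref{ex:twisted-bundle} are tailored so that their lifts realize the generators of $D$, proving surjectivity of $\Theta$.

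The key remaining step is injectivity of $\Theta$, which I would prove by showing $\Pi(\tilde M,\tilde\Sigma)=1$: if $\Theta([F])=0$, then $\tilde F$ itself represents an element of $\Pi(\tilde M,\tilde\Sigma)$. Since $\tilde j_*$ is an isomorphism, Lemma \ref{lem:monodromy-fundgp} forces $\Mdy(\tilde M,\tilde\Sigma)=1$, while Lemma \ref{lem:classification of pi1(C(S,M), j) for an aspherical manifold} embeds $\pi_1(C(\tilde\Sigma,\tilde M),\tilde j)$ into the center $Z(\pi_1(\tilde\Sigma))$, which is trivial for $\Sigma$ of genus at least $2$. When $\Sigma$ is a torus, the resulting $\ZZ^2$ lies in the image of $\mathscr{I}\colon \pi_1(J(\tilde\Sigma,\tilde M),\tilde j)\to\pi_1(C(\tilde\Sigma,\tilde M),\tilde j)$ because its generators are realized by translation loops in the identity component of $\Homeo(\tilde\Sigma)$, and is therefore killed in $\mathcal{K}(\tilde M,\tilde\Sigma)$ by Lemma \ref{lem:monodromy group and kernel group}. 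Combined with the exact sequence (\ref{exact sequence for homotopy motion group}), this gives $\Pi(\tilde M,\tilde\Sigma)=1$. The principal obstacle is the structural analysis of $\bar M$ together with the Bass--Serre (or HNN) bookkeeping needed to pin down $D$ in all four cases---particularly distinguishing the twisted $I$-bundle cases from the others and handling the non-separating situation---after which identifying $\Theta$ with $D$ via the generators $\lambda$, $\mu$, $\mu'$ closes the proof.
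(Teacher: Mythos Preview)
Your strategy coincides with the paper's: both lift to the cover $p\colon\tilde M\to M$ associated with $j_*\pi_1(\Sigma)$, invoke Simon's conjecture, and reduce everything to the vanishing of $\Pi(\Sigma\times\RR,\Sigma\times\{0\})$. The paper isolates this last fact as a separate Lemma~\ref{lem:Haken1} and proves it with exactly the ingredients you name (Baer, Lemma~\ref{lem:classification of pi1(C(S,M), j) for an aspherical manifold}, translation loops for the torus). Your map $\Theta$ into $D=N_G(H)/H$ is the paper's ``which level $\Sigma\times\{n\}$ does $\tilde f_1$ hit'' map, rephrased via the deck group; the paper even writes down the homomorphism $\zeta\colon\Pi(M,\Sigma)\to\ZZ$ explicitly in case~(1). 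One correction: $\bar M$ is always $\Sigma\times[0,1]$ here, never a twisted $I$-bundle, since the latter would have fundamental group a non-orientable surface group rather than $H\cong\pi_1(\Sigma)$; the paper gets $\tilde M\cong\Sigma\times\RR$ directly from Brown~\cite{Brown66}.

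The ``principal obstacle'' you flag is exactly where the paper's work lies, and it is not purely algebraic. Computing $D$ via Bass--Serre requires knowing that for a compact irreducible $3$-manifold $M_i$ with incompressible boundary $\Sigma$ one has $N_{\pi_1(M_i)}(H)=H$ unless $M_i$ is $\Sigma\times I$ or a twisted $I$-bundle, and the analogous statement in the HNN (non-separating) situation. The paper supplies this input geometrically rather than group-theoretically: it takes the closest lift $\Sigma\times\{1\}$ of $\Sigma$ above $\tilde\Sigma=\Sigma\times\{0\}$, forms the ``local covering transformation'' $\psi=(p|_{\Sigma\times\{1\}})^{-1}\circ p|_{\Sigma\times\{0\}}$, and reads off the dichotomy from whether $\psi$ is orientation-preserving or reversing. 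In the preserving case $p$ restricted to $\Sigma\times[0,1]$ descends to a degree-one covering of $M$, forcing the bundle structure; in the reversing case it is a degree-two covering onto one complementary piece, which \cite[Theorem~10.3]{Hempel76} then identifies as a twisted $I$-bundle. You will need an argument of comparable strength to complete your Bass--Serre computation; once you have it, the two proofs are interchangeable, with yours packaging the case analysis into a single group $D$ and the paper's Lemma~\ref{lem:Haken2} (uniqueness of motions landing on a prescribed level) absorbed into the injectivity of $\Theta$.
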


To show the above theorem, we require the following two lemmas.

\begin{lemma}
\label{lem:Haken1}
Let $\Sigma$ be a closed, orientable surface of genus at least $1$. 
Then 
\[\Pi(\Sigma\times\RR,\Sigma\times \{0\})=1.\]
\end{lemma}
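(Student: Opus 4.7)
The plan is to exploit the exact sequence
\[
1 \to \mathcal{K}(M,\Sigma) \to \Pi(M,\Sigma) \xrightarrow{\partial_+} \Mdy(M,\Sigma) \to 1
\]
with $M = \Sigma\times\RR$, and show that both flanking groups vanish. Observe that $M$ is irreducible and aspherical (its universal cover is $\widetilde\Sigma\times\RR\cong\RR^3$ for $\Sigma$ of genus $\geq 1$), and that the projection $M\to\Sigma$ realizes $j_*:\pi_1(\Sigma)\to\pi_1(M)$ as an isomorphism.

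To show $\Mdy(M,\Sigma)=1$, I would apply Lemma \ref{lem:monodromy-fundgp}: by irreducibility of $M$, a class $[f]\in\MCG(\Sigma)$ lies in $\Mdy(M,\Sigma)$ if and only if $(j\circ f)_*$ equals $j_*$ up to post-composition with an inner automorphism of $\pi_1(M)$. Since $j_*$ is an isomorphism, this forces $f_*$ to be an inner automorphism of $\pi_1(\Sigma)$, and by the Dehn--Nielsen--Baer theorem (valid for closed orientable surfaces of genus $\geq 1$) such an $[f]$ must be trivial in $\MCG(\Sigma)$.

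To show $\mathcal{K}(M,\Sigma)=1$, I would use Lemma \ref{lem:classification of pi1(C(S,M), j) for an aspherical manifold}: asphericity of $M$ yields an injection $\Phi:\pi_1(C(\Sigma,M),j)\hookrightarrow Z(j_*(\pi_1(\Sigma)),\pi_1(M))$, and since $j_*$ is surjective the target simplifies to the center $Z(\pi_1(\Sigma))$. When the genus is at least $2$ this center is trivial, so $\pi_1(C(\Sigma,M),j)=1$, and Lemma \ref{lem:monodromy group and kernel group} (which invokes Hamstrom's theorem to guarantee $\pi_1(J(\Sigma,M),j)=0$) gives $\mathcal{K}(M,\Sigma)\cong\pi_1(C(\Sigma,M),j)=1$.

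The main obstacle is the torus case $\Sigma=T^2$, in which neither $Z(\pi_1(T^2))=\ZZ^2$ nor $\pi_1(\Homeo(T^2),\id)=\ZZ^2$ vanishes, so the triviality of $\mathcal{K}(M,\Sigma)$ has to come from $\mathscr{I}$ being surjective rather than from the numerator being zero. I would verify this by tracing the composition $\Phi\circ\mathscr{I}:\pi_1(\Homeo(T^2),\id)\to Z(\pi_1(T^2))$: the two translation loops generating $\pi_1(\Homeo(T^2),\id)$ evaluated at the basepoint $x_0$ trace out the two standard generators of $\pi_1(T^2,x_0)$, so $\Phi\circ\mathscr{I}$ is an isomorphism $\ZZ^2\to\ZZ^2$. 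Injectivity of $\Phi$ then forces $\mathscr{I}$ to be surjective, and Lemma \ref{lem:monodromy group and kernel group} yields $\mathcal{K}(T^2\times\RR,T^2)=1$, completing the proof.
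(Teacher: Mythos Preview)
Your proof is correct and follows essentially the same strategy as the paper: split into $\Mdy$ and $\mathcal{K}$, and handle the torus case for $\mathcal{K}$ by showing that the translation loops in $\pi_1(\Homeo(T^2),\id)$ surject onto $Z(\pi_1(T^2))$ via $\Phi\circ\mathscr{I}$. The only difference is in the $\Mdy$ step: the paper uses the projection $q:\Sigma\times\RR\to\Sigma$ directly, observing that $\{q\circ f_t\}$ is a homotopy from $\id_\Sigma$ to $f_1$ and invoking Baer's theorem, whereas you route through Lemma~\ref{lem:monodromy-fundgp} and Dehn--Nielsen--Baer. The paper's route is a touch more elementary (it avoids the irreducibility/$\pi_2$ discussion), but both arguments are short and your version is perfectly valid.
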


\begin{proof}
Consider the projection $q:\Sigma\times \RR \to \Sigma\times\{0\}$.
Then for any homotopy motion $\alpha = \{f_t\}_{t \in I}$ of 
$\Sigma\times\{0\}$ in $\Sigma\times\RR$,
the composition $\{q \circ f_t\}_{t \in I}$ 
is a homotopy of maps from $\Sigma \times \{0\}$ to itself with 
initial end $\id_{\Sigma \times \{0\}}$ and terminal end $f_1$. 
It follows form Baer \cite{Baer28} 
(cf. \cite[Theorem 1.12]{FarbMargalit12})
that $f_1$ is isotopic to $\id_{\Sigma \times \{0\}}$.
Hence $\Mdy(\Sigma\times\RR,\Sigma\times \{0\})$ is trivial, and so 
$\Pi(\Sigma\times\RR,\Sigma\times \{0\})=\mathcal{K}(\Sigma\times\RR,\Sigma\times \{0\})$.

Suppose first that the genus of $\Sigma$ is at least $2$.
Then $\mathcal{K}(\Sigma\times\RR,\Sigma\times \{0\})\cong 
\pi_1 ( C (\Sigma\times \{0\} , \Sigma\times\RR) , j)$
by Lemma \ref{lem:monodromy group and kernel group}, 
and this group 
is isomorphic to $Z(j_*(\pi_1(\Sigma \times \{0\} )),\pi_1(\Sigma\times \RR))\cong Z(\pi_1(\Sigma))$
by Lemma \ref{lem:classification of pi1(C(S,M), j) for an aspherical manifold}.
Since $Z(\pi_1(\Sigma))=1$, we have 
$\mathcal{K}(\Sigma\times\RR,\Sigma\times \{0\})=1$.

Suppose next that $\Sigma$ is the torus. 
Then, by Lemmas \ref{lem:monodromy group and kernel group} and
\ref{lem:classification of pi1(C(S,M), j) for an aspherical manifold}, 
$\mathcal{K}(\Sigma\times\RR,\Sigma\times \{0\})$ 
is isomorphic to the quotient of 
the centralizer $Z(j_*(\pi_1(\Sigma \times \{0\} )),\pi_1(\Sigma\times \RR))\cong \pi_1(\Sigma)$
by $\Phi(\mathscr{I} ( \pi_1 ( J (\Sigma\times\{0\}, \Sigma\times\RR ) ) ))$.
Now, identify $\Sigma$ with $\RR^2 / \ZZ^2$,
and denote by $[x,y]$ the point of $\Sigma$ represented by $(x,y)\in\RR^2$.
For an element $(m,n)\in \ZZ^2=\pi_1(\Sigma)$, let 
$\xi_{m,n}= \{g_t\}_{t \in I}$ be the ambient isotopy of $\Sigma$,
defined by $g_t ([x,y]) = ( [x + mt,   y + nt)]) $,
and regard it as an element of 
$\pi_1(J (\Sigma \times \{ 0\} , \Sigma\times\RR))$.
Then we can easily check that $\Phi(\mathscr{I}(\xi_{m,n}))=(m,n)$.
Thus $\Phi\circ \mathscr{I}$ is surjective, 
and therefore we again have $\mathcal{K}(\Sigma\times\RR,\Sigma\times \{0\})=1$. 
Hence $\Pi(\Sigma\times\RR,\Sigma\times \{0\})=1$ as desired.
\end{proof}

\begin{lemma}
\label{lem:Haken2}
Let $\Sigma$ be a closed, orientable surface of genus at least $1$. 
For $t \in [0,1]$, let $j_t : \Sigma \to \Sigma\times \RR$ be the embedding defined by 
$j_{t} (x) = (x, t)$, and set $\kappa: = \{j_t\}_{t \in I} : I \to C(\Sigma, \Sigma \times  \RR)$. 
Then any continuous map $\alpha=\{ f_t\}_{t \in I}:
(I, \{ 1\} , \{ 0\} ) \to ( C(\Sigma, \Sigma \times \RR) , 
J (\Sigma \times \{ 1\} , \Sigma \times \RR) \circ j_1 , \{j_0\} )$ 
is homotopic to $\kappa$ 
via a homotopy through maps of the same form. 
\end{lemma}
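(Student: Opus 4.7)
The plan is to reduce the lemma to Lemma~\ref{lem:Haken1} via a vertical translation trick on the total space $\Sigma\times\RR$.

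Let $T_s:\Sigma\times\RR\to\Sigma\times\RR$ denote the vertical translation $(x,u)\mapsto(x,u+s)$. First I would replace $\alpha=\{f_t\}_{t\in I}$ by the reparametrized family
\[
\alpha''=\{T_{-t}\circ f_t\}_{t\in I}.
\]
Since $f_0=j_0$, we have $\alpha''_0=T_0\circ j_0=j_0$. At $t=1$, the hypothesis $f_1\in J(\Sigma\times\{1\},\Sigma\times\RR)\circ j_1$ says that $f_1$ is an embedding with image $\Sigma\times\{1\}$, so $\alpha''_1=T_{-1}\circ f_1$ is an embedding with image $\Sigma\times\{0\}=j_0(\Sigma)$, i.e.\ $\alpha''_1\in J(\Sigma\times\{0\},\Sigma\times\RR)$. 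Thus $\alpha''$ represents an element of the homotopy motion group $\Pi(\Sigma\times\RR,\Sigma\times\{0\})$ in the sense of Definition~\ref{def:homotopy motion group}. Applying the same recipe to $\kappa$ yields $\kappa''_t(x)=T_{-t}(x,t)=(x,0)=j_0(x)$, so $\kappa''$ is simply the constant (identity) homotopy motion at $j_0$.

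By Lemma~\ref{lem:Haken1}, $\Pi(\Sigma\times\RR,\Sigma\times\{0\})$ is trivial, so there is a homotopy $H'':I\times I\to C(\Sigma,\Sigma\times\RR)$ between $\alpha''$ and $\kappa''$ through paths of the prescribed form; explicitly,
\[
H''(s,0)=j_0,\quad H''(s,1)\in J(\Sigma\times\{0\},\Sigma\times\RR),\quad H''(0,\cdot)=\alpha'',\quad H''(1,\cdot)=\kappa''.
\]

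The last step is to undo the reparametrization by setting $H(s,t):=T_{t}\circ H''(s,t)$. I would then verify directly that (i) $H(s,0)=T_0\circ j_0=j_0$; (ii) $H(s,1)=T_1\circ H''(s,1)$ is an embedding of $\Sigma$ with image $T_1(\Sigma\times\{0\})=\Sigma\times\{1\}$, hence lies in $J(\Sigma\times\{1\},\Sigma\times\RR)\circ j_1$; (iii) $H(0,t)=T_t\circ T_{-t}\circ f_t=f_t$, so $H(0,\cdot)=\alpha$; and (iv) $H(1,t)=T_t\circ j_0=j_t$, so $H(1,\cdot)=\kappa$. This provides the desired homotopy from $\alpha$ to $\kappa$ through paths of the same form.

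There is no real obstacle once one observes the translation trick: all of the work is in Lemma~\ref{lem:Haken1}, and the remaining content is the routine bookkeeping that pre-composition with $T_{\pm t}$ converts boundary conditions at level $\Sigma\times\{1\}$ into boundary conditions at level $\Sigma\times\{0\}$ and back.
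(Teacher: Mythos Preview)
Your proof is correct and takes a genuinely different route from the paper's. The paper argues directly: it first uses Baer's theorem to reduce to the case $f_1=j_1$, then tracks the based loop $t\mapsto f_t(x_0)$, shows its $\Sigma$-projection lies in $Z(\pi_1(\Sigma))$ (and kills it in the torus case using the rotations $\xi_{m,n}$), and finally builds the homotopy $H$ cell by cell using asphericity of $\Sigma\times\RR$---essentially redoing the obstruction-theoretic computation that already went into Lemma~\ref{lem:Haken1} (and Lemma~\ref{lem:classification of pi1(C(S,M), j) for an aspherical manifold}). Your translation trick $\alpha\mapsto\{T_{-t}\circ f_t\}$ instead converts the boundary condition at level $1$ into one at level $0$, so the whole statement becomes a formal corollary of Lemma~\ref{lem:Haken1}. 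This is shorter and cleaner, and it makes transparent that Lemma~\ref{lem:Haken2} contains no new obstruction-theoretic content beyond Lemma~\ref{lem:Haken1}; the paper's approach, on the other hand, is self-contained at this spot and does not rely on the group-theoretic packaging of $\Pi(\Sigma\times\RR,\Sigma\times\{0\})$.
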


\begin{proof}
Let $\alpha = \{ f_t\}_{t \in I}$ be a continuous map 
from $(I, \{ 1\} , \{ 0\} )$  to $( C(\Sigma, \Sigma \times \RR) , 
J (\Sigma \times \{ 1\} , \Sigma \times \RR) \circ j_1 , \{j_0\} )$. 
Let $q:\Sigma\times\RR \to \Sigma$ be the projection. 
Then, by Baer \cite{Baer28}, $q \circ \alpha(1)= q \circ f_1$ is isotopic to the identity map
as a self-homeomorphism of $\Sigma$.  
Thus deforming $\alpha$ by a homotopy through maps 
$(I, \{ 1\} , \{ 0\} ) \to ( C(\Sigma, \Sigma \times \RR) , 
J (\Sigma \times \{ 1\} , \Sigma \times \RR) \circ j_1 , \{j_0\} )$
if necessary, 
we may assume that $\alpha(1)=f_1=j_1$.

Consider the path
\[u: (I, \{1\}, \{0\}) \to (\Sigma \times \RR , \{ (x_0, 0) \}, \{ (x_0, 1) \} ), ~t \mapsto f_t (x_0). \]
We see that the closed loop $q\circ u$ represents an element of the center 
$Z(\pi_1(\Sigma))$, 
by an argument similar to that 
in the paragraph preceding Definition \ref{def:classification of pi1(C(S,M), j) for an aspherical manifold}.
(Here, we use the map $\bar\alpha:\Sigma\times S^1 \to \Sigma$
defined by $\bar\alpha(x,t)=q(f_t(x))$.) 
If $\Sigma$ has genus at least $2$, then $q\circ u$ represents the trivial element 
of $\pi_1(\Sigma)$.
If $\Sigma$ is a torus, 
using the ambient isotopy $\xi_{m,n}$ for $(m,n)\in \ZZ^2$ defined in
the proof of Lemma \ref{lem:Haken1}, 
we can deform $\alpha$ by 
a homotopy through maps 
$(I, \{ 1\} , \{ 0\} ) \to ( C(\Sigma, \Sigma \times \RR) , 
J (\Sigma \times \{ 1\} , \Sigma \times \RR) \circ j_1 , \{j_0\} )$ 
so that $q\circ u$ represents the trivial element of $\pi_1(\Sigma)$. 

Let $X=(\Sigma \times I) \times I$ and $X_0=\partial X$,
and consider the map $F:X_0 \to \Sigma\times \RR$ defined by
$F(x,t,0)=\alpha(x,t)$, $F(x,t,1)=\kappa(x,t)=(x,t)$, 
$F(x,0,s)= (x,0)$, $F(x,1,s)= (x,1)$ ($x\in \Sigma$, $t, s\in I$). 
Note that $F$ is a well-defined continuous map,
because $\alpha(1)=j_1$.
Put ${\hat{\hat x}}_0=(x_0,0,0)$, $\hat x_0=(x_0,0)$ and let 
$\theta: \pi_1(X,{\hat{\hat x}}_0) \to \pi_1(\Sigma\times \RR, \hat x_0)$
be the homomorphism induced by the map 
$X=(\Sigma \times I) \times I \to \Sigma\times \RR$
obtained as the composition of the projection
$(\Sigma \times I) \times I \to \Sigma\times I$
and the inclusion $\Sigma\times I \to \Sigma\times \RR$.
Then, since $q\circ u$ represents the trivial element of $\pi_1(\Sigma)$,
we see that $F_*:\pi_1(X,{\hat{\hat x}}_0) \to \pi_1(\Sigma\times \RR, \hat x_0)$
is equal to $\theta\circ i_*$.
Hence, by Proposition \ref{prop:obstruction2},
the map $F$ extends over $X=(\Sigma \times I) \times I$, and 
the resulting map $F:(\Sigma \times I) \times I \to \Sigma\times \RR$ 
gives the desired homotopy between $\alpha$ and $\kappa$.
\end{proof}

\begin{proof}[Proof of Theorem $\ref{thm:Haken}$]
Let $p:\tilde M\to M$ be the covering
corresponding to $\pi_1(\Sigma)<\pi_1(M)$.
Then, by the positive solution of Simon's conjecture 
(see \cite[Theorem 9.2]{Canary08}),
$\tilde M$ admits a manifold compactification,
that is, there exists a compact $3$-manifold $\hat M$ with boundary,
such that $\tilde M$ is homeomorphic to 
$\hat M - \hat{C}$,
where $\hat{C}$ is a closed subset of $\partial\hat M$. 
We actually have $\hat{C}=\partial\hat M$ and $\tilde M\cong \Int \hat M$,
because  $M$ is closed.
Brown's theorem \cite[Theorem 3.4]{Brown66} implies that $\hat M\cong \Sigma\times [-\infty,\infty]$,
where $[-\infty,\infty]$ is the closed interval 
that is obtained by compactifying $\RR=(-\infty,\infty)$.
Thus $\tilde M$ is identified with $\Sigma\times\RR$.
We assume that the restriction of the covering projection $p$ to 
$\Sigma\times \{0 \}$ is given by $p(x,0)=x$.
In other words, the inclusion map $j:\Sigma\to M$ has a lift
$\tilde j:\Sigma \to \tilde M  = \Sigma\times \RR$
such that $\tilde j(x)=(x,0)$.

Suppose that $\Pi (M,\Sigma)$ has a nontrivial element $\alpha = \{f_t\}_{t \in I}$ with $f_1=f$.
Let $\tilde{\alpha }= \{\tilde f_t\}_{t \in I}$ be the lift of $\alpha$ with $\tilde f_0=\tilde j$,
and let $\tilde f$ be the lift of $f$ defined by $\tilde f =\tilde f_1$.
Since $f(\Sigma)=\Sigma$, the image $\tilde f(\Sigma)$ is a component of
$p^{-1}(\Sigma)$ to which the restriction of $p$ is a homeomorphism onto $\Sigma$.
In particular, we have either $\tilde f(\Sigma)=\Sigma\times \{0 \}$ or
$\tilde f(\Sigma) \cap (\Sigma\times \{0 \}) =\emptyset$.  
If $\tilde f(\Sigma)=\Sigma\times \{0 \}$, the map $\tilde{\alpha}$ 
is homotopic to the constant map from $I$ to $\tilde{j} \in  C(\Sigma, \tilde{M} )$
via a homotopy through maps 
$(I, \{ 1\} , \{ 0\} ) \to ( C(\Sigma, \tilde{M} ) , J (\Sigma \times \{ 0\} , \tilde{M} ) \circ \tilde{j} , \{ \tilde{j} \} )$ 
by Lemma \ref{lem:Haken1}. 
This homotopy projects to a homotopy from $\alpha$ to the trivial homotopy motion of $\Sigma \subset M$. 
This contradicts the assumption that
$\alpha$ is a nontrivial element of $\Pi (M,\Sigma)$.
Therefore we have $\tilde f(\Sigma) \cap (\Sigma\times \{0 \}) =\emptyset$. 
Then, by \cite{Brown66}, $\tilde f(\Sigma)$ is parallel to $\Sigma\times \{0 \}$ in
$\tilde M= \Sigma\times \RR$.
We may choose the product structure so that $\tilde f(\Sigma)=\Sigma\times \{1 \}$.
Since $p$ is a covering and since $\Sigma$ is incompressible, 
we see that $p^{-1}(\Sigma)\cap (\Sigma \times (0,1))$
is a finite disjoint union of compact surfaces 
that are incompressible in $\tilde M$ and so in 
$\Sigma\times \RR$.
The result of \cite{Brown66} implies that all components 
of $p^{-1}(\Sigma)\cap (\Sigma \times (0,1))$ 
are parallel to $\Sigma\times \{0 \}$ in $\tilde M$.
Hence there exists a component that is closest to $\Sigma\times \{0 \}$.
We choose $\alpha$ so that
$\tilde f(\Sigma)=\Sigma\times \{1 \}$ is the closest component,
namely $p^{-1}(\Sigma)\cap (\Sigma \times (0,1))=\emptyset$.

Fix an orientation of the surface $\Sigma\subset M$,
and orient the surfaces 
$\Sigma\times \{t \} \subset \tilde M= \Sigma\times \RR$
($t\in\RR$) via the canonical identification with the oriented $\Sigma$. 
Consider the homeomorphism $\psi :\Sigma\times \{0 \} \to \Sigma\times \{1 \}$
defined by
$\psi =(p|_{\Sigma\times \{1\}})^{-1}\circ p|_{\Sigma\times \{0 \}}$. 
It should be noted that $\psi$ is a ``local covering transformation",
in the sense that $\psi$ extends to a homeomorphism 
between a neighborhoods of $\Sigma\times \{0 \}$ and $\Sigma\times \{1 \}$
that commutes with the covering projection $p$.
Let 
$q:\Sigma\times \RR\to \Sigma$ 
be the projection 
to the first factor, which is identified with the surface $\Sigma$ in $M$. 

\smallskip

Case 1. Suppose that $\psi$ is orientation-preserving.
Consider the $3$-manifold
$M':=(\Sigma \times [0,1])/ (x, 0) \sim (q (\psi(x)),1)$.
Then the restriction of the covering projection $p$ to $\Sigma \times [0,1]$
descends to a continuous map $p':M'\to M$,
which is a local-homeomorphism at the image of $\Sigma \times (0,1)$ in $M'$.
The condition that $\psi$ is orientation-preserving implies that
$p'$ is also a local homeomorphism at an open neighborhood of 
the image of $\Sigma \times \{0 \}$ 
(which is equal to that of $\Sigma \times \{1 \}$) in $M'$.
(Here, we use the fact that
$\psi$ is a local covering transformation.)
Thus $p':M'\to M$ is a local homeomorphism.
Since $M$ is a compact, connected manifold, 
it follows that $p'$ has the path-lifting property, and
hence $p'$ is a covering  (see e.g. Forster \cite[Theorem 4.19]{Forster91}). 
Since  $p^{-1}(\Sigma)\cap (\Sigma \times (0,1))=\emptyset$, 
the preimage of a point in $\Sigma\subset M$ by $p'$
is a singleton.
Hence $p'$ is a homeomorphism and so 
$M$ is identified with 
the $\Sigma$-bundle $(\Sigma\times\RR)/(x,t)\sim (\varphi(x), t+1)$ over $S^1$, 
where the monodromy $\varphi$ is defined by $\varphi = q \circ \psi$.

\smallskip
Case 2. Suppose that $\psi$ is orientation-reversing.
Consider the submanifold 
$M':=\Sigma \times [0,1]$ of $\tilde M$
and its image $M_1:=p(M')$ in $M$.
Since $p^{-1}(\Sigma)\cap (\Sigma \times (0,1))=\emptyset$, 
$p(\interior M')$ is disjoint from $\Sigma$.
This together with the assumption that $\psi$ is orientation-reversing
implies that $M_1$ is a submanifold of $M$ with boundary $\Sigma$.
Moreover, as in Case 1, we can see that the restriction 
$p':M'\to M_1$ of $p$ to $M'$ is a covering and that it has geometric degree $2$.
(Note that the preimage of a point $x\in \Sigma=\partial M_1$ by $p'$ 
consists of the two points $(x,0)$ and $(\psi(x), 1)$ of $M'=\Sigma \times [0,1]$.)
By \cite[Theorem 10.3]{Hempel76},
this implies that $M_1$ is a twisted $I$-bundle,
$\Sigma\times [0,1]/(x,t)\sim (h(x),1-t)$,
associated with some orientation-reversing free involution
$h$ of $\Sigma$. 

\smallskip
The above arguments show that if $\Pi (M,\Sigma)$ is nontrivial,
then either (i) $M$ is a $\Sigma$-bundle over $S^1$
or (ii) $\Sigma$ separates $M$ into two submanifolds,
at least one of which is a twisted $I$-bundle. 
In particular, we obtain the assertion (4) of the theorem.

Suppose that the conclusion (i) holds, 
namely $M \cong (\Sigma\times\RR)/(x,t)\sim (\varphi(x), t+1)$
for some $\varphi\in \MCG_+(\Sigma)$. 
Consider the map 
$\zeta:\Pi (M, \Sigma) \to \ZZ$ that sends the homotopy motion
$\alpha=\{f_t\}_{t\in I}$ to $n\in\ZZ$ given by 
$\tilde{f}_1 (\Sigma) = \Sigma \times \{ n \}$, 
where $\tilde{\alpha}= \{\tilde f_t\}_{t \in I}$ 
is the lift of $\alpha$ with $\tilde f_0=\tilde j$. 
Then $\zeta$ is injective,
because if two homotopy motions $\alpha$ and $\alpha'$ are mapped to the same element $n\in\ZZ$,
then the homotopy between $\tilde{\alpha}$ and $\tilde{\alpha}'$, 
given by Lemmas \ref{lem:Haken1} and \ref{lem:Haken2}
according to whether $n=0$ or not,
projects to a homotopy which gives the equivalence of $\alpha$ and $\alpha'$
as elements of $\Pi(M,\Sigma)$. 
It is obvious that $\zeta$ is a group homomorphism
and maps $\lambda$ to $1$, where $\lambda$ is the homotopy motion 
described in Example \ref{ex:fiber-surface}.
Hence $\Pi (M, \Sigma)$ is the infinite cyclic group generated by $\lambda$,
proving the assertion (1).

Suppose that the conclusion (ii) holds,
namely $M=M_1\cup_{\Sigma} M_2$
and $M_1=\Sigma\times [0,1]/(x,t)\sim (h(x),1-t)$, where $h$ is an orientation-reversing involution of $\Sigma$. 
By the preceding argument, we may assume that $\tilde M=\Sigma\times \RR$
and the restriction of the covering projection $p$ to $\Sigma\times [0,1]$
is the double covering of $M_1$ that maps $(x,t)$ to the point 
$[x,t]\in M_1$ it represents.
Note that the homotopy motion $\mu=\{f_t\}_{t\in I}$ defined in Example \ref{ex:twisted-bundle}
lifts to the map $\tilde\mu=\{\tilde f_t\}_{t\in I}:\Sigma\to \Sigma\times\RR$ 
given by $\tilde f_t(x)=(x,t)$.
Moreover, Lemma \ref{lem:Haken2} implies
if a homotopy motion $\alpha=\{f'_t\}$ has a lift  
$\tilde\alpha=\{\tilde f_t'\}_{t\in I}:\Sigma\to \Sigma\times\RR$
such that $\tilde f_0'(x)=(x,0)$ and $\image(\tilde f_1')=\Sigma\times \{1\} $,
then it is equivalent to $\mu$.
We can easily see from the definition of the concatenation that 
$\mu \cdot \mu$ is equivalent to the 
identity motion,
thus, the order of $\mu$ in $\Pi (M, \Sigma)$ is $2$. 

Suppose that there exists a nontrivial element $\beta = \{g_t\}_{t \in I}$ 
of $\Pi(\Sigma, M)$ that is not equivalent to $\alpha$.
Then the previous arguments imply that $\tilde{g}_1(\Sigma)$ is 
equal to neither $\Sigma \times \{0 \}$ nor $\Sigma \times \{1 \}$,
and so $\tilde{g}_1(\Sigma)\cap (\Sigma\times [0,1]) =\emptyset$.
(Here, $\tilde{\beta}= \{\tilde g_t\}_{t \in I}$ is the lift of $\beta$ with $\tilde g_0=\tilde j$.)
By choosing $\beta = \{ g_t \}_{t \in I}$ suitably, we may assume that $\tilde g_1(\Sigma)$ is 
the lift of $\Sigma$ closest 
to $\Sigma \times \{0 \}$ in $\Sigma\times (-\infty,0)$. 
Then the argument in Case 2 implies that 
$M_2$ is a twisted $I$-bundle
and that the terminal end $g_1$ is (represented by) the involution 
$h'$ corresponding to 
the twisted $I$-bundle structure of $M_2$.
This, in particular, proves the assertion (2).
In order to prove the assertion (3), observe that
$p:\tilde M \to M$ is a regular covering
and that 
the covering transformation group is the infinite dihedral group 
generated by the two involutions $\gamma$ and $\gamma'$ of 
$\tilde M = \Sigma\times \RR$ defined by
\[
\gamma(x,t)=(h(x),1-t), \quad \gamma'(x,t)=(h'(x),-1-t).
\]
In this case, $p^{-1}(\Sigma)=\Sigma\times\ZZ$,
and the argument for the case (i) implies that
$\Pi( \Sigma , M)$ is the infinite dihedral group 
generated by the two elements $\mu$ and $\mu'$ of order $2$.
This completes the proof of the assertion (3).
\end{proof}

\begin{corollary}
\label{cor:Haken}
Let $M$ be a closed, orientable Haken manifold $M$,
and suppose that $\Sigma$ is 
an orientable incompressible surface in $M$.
Then the following hold.
\begin{enumerate}
\renewcommand{\labelenumi}{$(\arabic{enumi})$}
\item
If $M$ is a $\Sigma$-bundle over $S^1$ with monodromy $\varphi$ 
and $\Sigma$ is a fiber surface, then 
$\Mdy(M,\Sigma)$ is the cyclic group $\langle \varphi\rangle$,
and $\mathcal{K}(M,\Sigma)$ is the $($possibly trivial$)$
subgroup generated by $\lambda^n$ of the infinite cyclic group
$\Pi(M,\Sigma)=\langle \lambda\rangle$, 
where $n$ is the order of $\varphi$.
Moreover, the homomorphism $\deg : \mathcal{K}(M,\Sigma)=\langle \lambda^n\rangle \to \ZZ$
is given by 
$\deg ( \lambda^n) =n$ under a suitable orientation convention.  
\item
If $\Sigma$ separates $M$ into two submanifolds, $M_1$ and $M_2$,
precisely one of which is a twisted $I$-bundle, then 
$\Mdy(M,\Sigma)$ is the order-$2$ cyclic group generated by
the orientation-reversing involution of $\Sigma$
associated with the twisted $I$-bundle structure,
and $\mathcal{K}(M,\Sigma)$ is the trivial group. 
\item
If $\Sigma$ separates $M$ into two submanifolds, $M_1$ and $M_2$,
both of which are twisted $I$-bundles, then 
$\Mdy(M,\Sigma)$ is the $($finite or infinite, and possibly cyclic$)$ dihedral group generated by
the two orientation-reversing involutions
$h_1$ and $h_2$ of $\Sigma$
associated with the twisted $I$-bundle structures, and 
$\mathcal{K}(M,\Sigma)$ is the subgroup of 
the inifinite dihedral group 
$\Pi(M,\Sigma)=\langle \mu, \mu' \ | \
\mu^2, \mu'^2 \rangle$
generated by $(\mu\mu')^n$,
where $n$ is the order of $hh'$.
Moreover, the homomorphism $\deg : \mathcal{K}(M,\Sigma)=\langle (\mu\mu')^n\rangle \to \ZZ$
is given by 
$\deg ((\mu\mu')^n)=2n$
under a suitable orientation convention. 
\item
Otherwise, 
both $\Mdy(M,\Sigma)$ and $\mathcal{K}(M,\Sigma)$ are the trivial group.
\end{enumerate}
\end{corollary}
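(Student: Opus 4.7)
The plan is to combine Theorem \ref{thm:Haken}, which presents $\Pi(M,\Sigma)$ explicitly by generators (and relations) in each of the four cases, with the exact sequence
\[
1 \to \mathcal{K}(M,\Sigma) \to \Pi(M,\Sigma) \xrightarrow{\partial_+} \Mdy(M,\Sigma) \to 1,
\]
reading off $\Mdy(M,\Sigma)$ as the image of $\partial_+$ and $\mathcal{K}(M,\Sigma)$ as its kernel. The first step is to evaluate $\partial_+$ on the canonical generators, which is immediate from Examples \ref{ex:fiber-surface} and \ref{ex:twisted-bundle}: $\partial_+(\lambda)=\varphi^{-1}$, $\partial_+(\mu)=h$, and $\partial_+(\mu')=h'$. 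Case (4) then follows from $\Pi(M,\Sigma)=1$.

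For case (1), $\Pi(M,\Sigma)=\langle\lambda\rangle\cong\ZZ$ surjects onto $\langle\varphi^{-1}\rangle=\langle\varphi\rangle$ with kernel $\langle\lambda^n\rangle$, where $n$ is the order of $\varphi$ (and the kernel is trivial if $\varphi$ has infinite order). For case (2), $\mu$ already has order $2$ in $\Pi(M,\Sigma)$ and $h$ is a nontrivial involution in $\MCG(\Sigma)$, so $\partial_+$ is injective. For case (3), a general element of the infinite dihedral group $\langle \mu,\mu'\mid \mu^2=\mu'^2=1\rangle$ has one of the forms $(\mu\mu')^k$ or $\mu(\mu\mu')^k$; the latter always maps to a reflection $h(hh')^k\in\langle h,h'\rangle$ and hence is never trivial, while $(\mu\mu')^k\mapsto(hh')^k$ lies in the kernel iff $n\mid k$, where $n$ is the order of $hh'$. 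This yields the stated descriptions of $\Mdy(M,\Sigma)$ and $\mathcal{K}(M,\Sigma)$.

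The remaining task is to compute $\deg$ on the generators of $\mathcal{K}(M,\Sigma)$, for which the plan is to lift to the covering $p:\tilde M=\Sigma\times\RR\to M$ corresponding to $\pi_1(\Sigma)<\pi_1(M)$ constructed in the proof of Theorem \ref{thm:Haken}. In case (1), $\lambda$ admits the obvious lift $\tilde\lambda_t(x)=(x,t)$; using the concatenation rule for $\Pi$ together with the fundamental relation $p(x,t+1)=p(\varphi^{-1}(x),t)$, a short induction gives $\widetilde{\lambda^k}_t(x)=(x,kt)$. Since $\tilde M\to M\to S^1$ exhibits $M$ as a surface bundle, the induced map $\widehat{\lambda^n}\colon\Sigma\times S^1\to M$ covers the degree-$n$ self-map of $S^1$ fiberwise by homeomorphisms, so $\deg(\lambda^n)=\pm n$ under a consistent orientation choice. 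In case (3), using the deck transformations $\gamma(x,t)=(h(x),1-t)$ and $\gamma'(x,t)=(h'(x),-1-t)$ identified in the proof of Theorem \ref{thm:Haken}, the lift $\widetilde{\mu}_t(x)=(x,t)$ ends at $(x,1)$ while $\widetilde{\mu'}_t(x)=(x,-t)$; the second half of $\mu\cdot\mu'$ traces the path $s\mapsto[h(x),s]_{M_2}$ but its lift must begin at $(x,1)$ rather than at the natural base point $(h(x),0)$, and the involution $\gamma$ takes $(h(x),0)$ to $(x,1)$, so that lift is $(x,1+s)$. Consequently $\widetilde{\mu\cdot\mu'}_t(x)=(x,2t)$, and iterating gives $\widetilde{(\mu\mu')^n}_t(x)=(x,2nt)$, yielding $\deg((\mu\mu')^n)=\pm 2n$.

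The main obstacle is this last bookkeeping step in case (3): one must track how concatenation interacts with the switching between $M_1$ and $M_2$ in $\tilde M$ through the deck-transformation group. The crucial point is that $\gamma$ and $\gamma'$ are involutions, so the twist introduced at each switch is cancelled by the identity $h\circ h=\id$ on the boundary. Once the explicit form of the lift is established, the degree of the induced map $\widehat{(\mu\mu')^n}\colon\Sigma\times S^1\to M$ is read off as the winding number of its $\RR$-coordinate, after orienting $\tilde M$, $M$, and $\Sigma\times S^1$ compatibly.
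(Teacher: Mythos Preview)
Your proof is correct and follows the same overall strategy as the paper: deduce the group-theoretic statements from Theorem~\ref{thm:Haken} together with the exact sequence~(\ref{exact sequence for homotopy motion group}), then compute $\deg$ on the generators of $\mathcal{K}(M,\Sigma)$. The only point of divergence is the degree computation in case~(3): you work directly in the covering $\tilde M=\Sigma\times\RR$ with its infinite dihedral deck group, tracking the lift of $(\mu\mu')^k$ step by step to obtain $\widetilde{(\mu\mu')^n}_t(x)=(x,2nt)$; the paper instead passes to the intermediate double cover $\tilde M/\langle\gamma\gamma'\rangle$, which is the $\Sigma$-bundle over $S^1$ with monodromy $hh'$, and then invokes the already-established case~(1) together with multiplicativity of degree under the degree-$2$ covering. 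Your computation is the unrolled version of theirs, and both are valid.
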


\begin{proof}
The assertions except for those concerning the homomorphism $\deg : \mathcal{K}(M,\Sigma) \to \ZZ$
follow immediately from Theorem \ref{thm:Haken}
and the exact sequence (\ref{exact sequence for homotopy motion group}). 
It is also easy to see that 
$\deg (\lambda^n) =n$.
The identity 
$\deg ((\mu\mu')^n) =  2n$
can be verified by considering the double covering of $M$,
which is the $\Sigma$-bundle over $S^1$ with monodromy $h_1h_2$. 
\end{proof}

\section{Homotopy motion groups of homotopically trivial surfaces}
\label{sec:homotopically trivial case}

In this section, 
we study the case 
contrastive to that treated in the previous section.
We say that a closed, orientable surface $\Sigma$ embedded in a closed, orientable $3$-manifold $M$ 
is {\it homotopically trivial} if 
the inclusion map $j:\Sigma\to M$ is homotopic to a constant map. 

\begin{lemma}
\label{lem:homotopically trivial}
A closed, orientable surface $\Sigma$ embedded in a closed, orientable, irreducible $3$-manifold $M$
is homotopically trivial if and only if $j_*:\pi_1(\Sigma)\to \pi_1(M)$ is the trivial 
homomorphism.
\end{lemma}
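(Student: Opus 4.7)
The ``only if'' direction is immediate, since any homomorphism induced by a null-homotopic map is trivial. So the plan is to establish the ``if'' direction, assuming $M$ is closed, orientable, irreducible, and $j_* : \pi_1 (\Sigma) \to \pi_1 (M)$ is trivial.

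First I would use the standard covering space criterion: since $j_*$ is trivial, the map $j : \Sigma \to M$ admits a lift $\tilde{j} : \Sigma \to \tilde{M}$ through the universal covering $p : \tilde{M} \to M$. Thus the proof reduces to showing that $\tilde{j}$ is null-homotopic in $\tilde M$, since then $j = p \circ \tilde j$ is also null-homotopic.

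Next, I would invoke the sphere theorem: because $M$ is closed, orientable, and irreducible, $\pi_2 (M) = 0$. Since $\tilde M$ is a covering space of $M$, we have $\pi_k (\tilde M) = \pi_k (M)$ for all $k \geq 2$, so in particular $\pi_2 (\tilde M) = 0$. Combined with $\pi_1 (\tilde M) = 1$, this makes $\tilde M$ a $2$-connected space.

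Finally, I would apply elementary obstruction theory to the map $\tilde{j} : \Sigma \to \tilde{M}$. Fix any CW structure on $\Sigma$, which is $2$-dimensional. The obstructions to constructing a null-homotopy of $\tilde{j}$ cell-by-cell lie in the groups $H^{k+1} (\Sigma \times I, \Sigma \times \partial I \, ; \, \pi_k (\tilde{M}))$ for $k = 1, 2$, which vanish since $\pi_1 (\tilde{M}) = \pi_2 (\tilde{M}) = 0$. Hence $\tilde{j}$ is homotopic to a constant map, and so is $j$.

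There is essentially no serious obstacle in this proof; the only point to be slightly careful about is identifying $\pi_2 (\tilde M)$ with $\pi_2 (M)$, where one uses that covering projections induce isomorphisms on higher homotopy groups, and applying the sphere theorem correctly (which requires the orientability and irreducibility hypotheses). Note also that the argument applies uniformly whether $\pi_1 (M)$ is finite (in which case $\tilde M \cong S^3$ by geometrization) or infinite (in which case $\tilde M \cong \RR^3$ and $M$ is aspherical), so one does not need to treat these two cases separately.
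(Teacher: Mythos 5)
Your proof is correct, and it is essentially the argument the paper intends: the paper proves the ``if'' direction by the same cell-by-cell obstruction argument used for Lemma \ref{lem:monodromy-fundgp}, extending the homotopy over the $2$-skeleton because $j_*$ kills $\pi_1(\Sigma)$ and over the top cells because irreducibility and the sphere theorem give $\pi_2(M)=0$. Your passage to the universal cover is just a repackaging of that same obstruction-theoretic computation and changes nothing of substance.
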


\begin{proof}
The ``only if" part is obvious. 
The ``if" part follows from
Proposition \ref{prop:obstruction1}(2). 
\end{proof}

We have the following theorem.

\begin{theorem}
\label{thm:homotopy motion group for a local surface}
Let $\Sigma$ be a closed, orientable surface embedded in 
a closed, orientable $3$-manifold $M$. 
Then the following hold.
\begin{enumerate}
\renewcommand{\labelenumi}{$(\arabic{enumi})$}
\item
If $\Sigma$ is homotopically trivial and if
$M$ is aspherical, then
$\Pi (M, \Sigma) \cong \pi_1(M) \times \MCG(\Sigma)$. 
To be more precise, 
$\Mdy(M,\Sigma)=\MCG(\Sigma)$, and 
$\mathcal{K}(M,\Sigma)$ is identified with the factor $\pi_1(M)$.
Moreover, the homomorphism $\deg : \mathcal{K}(M,\Sigma)\to \ZZ$ vanishes. 
\item
Conversely, if $\Mdy(M,\Sigma)  =  \MCG(\Sigma)$ and if $M$ is irreducible,
then $\Sigma$ is homotopically trivial.
\end{enumerate}
\end{theorem}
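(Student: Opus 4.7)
The plan is to prove the two parts separately, using Lemmas~\ref{lem:classification of pi1(C(S,M), j) for an aspherical manifold}, \ref{lem:monodromy group and kernel group}, \ref{lem:monodromy-fundgp}, and \ref{lem:homotopically trivial}.

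For part~(1), suppose $\Sigma$ is homotopically trivial and $M$ is aspherical. The equality $\Mdy(M,\Sigma)=\MCG(\Sigma)$ is immediate: for every $f\in\Homeo(\Sigma)$ the map $j\circ f$ is null-homotopic, hence homotopic to $j$. For the kernel, Lemma~\ref{lem:classification of pi1(C(S,M), j) for an aspherical manifold} applied with $j_*=0$ provides an injective homomorphism $\Phi:\pi_1(C(\Sigma,M),j)\to \pi_1(M,x_0)$. I would prove surjectivity by fixing a null-homotopy $H:\Sigma\times I\to M$ with $H_0=j$ and $H_1=c_{x_0}$, and, for each loop $\gamma$ at $x_0$, forming the loop $\alpha$ in $C(\Sigma,M)$ at $j$ that contracts $\Sigma$ along $H$, runs the constant family along $\gamma$, and then reverses $H$; direct computation gives $\Phi(\alpha)=[H(x_0,\cdot)]\cdot[\gamma]\cdot [H(x_0,\cdot)]^{-1}$, and this ranges over all of $\pi_1(M,x_0)$ as $\gamma$ varies. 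Next, for any loop $\xi$ in $J(\Sigma,M)$ at $j$, the evaluation $t\mapsto \xi(t)(x_0)$ is a loop in $\Sigma$ whose class in $\pi_1(M)$ factors through $j_*=0$, so the image $\mathscr{I}(\pi_1(J(\Sigma,M),j))$ is trivial, and Lemma~\ref{lem:monodromy group and kernel group} then yields $\mathcal{K}(M,\Sigma)\cong\pi_1(M)$.

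To upgrade the short exact sequence $1\to\pi_1(M)\to\Pi(M,\Sigma)\to\MCG(\Sigma)\to 1$ to a direct product, I would construct a splitting using the contractibility of the universal cover $\tilde M$. For $[f]\in\MCG(\Sigma)$ represented by $f\in\Homeo(\Sigma,x_0)$, let $\widetilde{j\circ f}$ be the lift of $j\circ f$ sending $x_0$ to $\tilde j(x_0)=\tilde x_0$; the homotopy in $\tilde M$ from $\tilde j$ to $\widetilde{j\circ f}$ is unique up to homotopy rel endpoints, and projection defines a splitting $s:\MCG(\Sigma)\to \Pi(M,\Sigma)$, which respects the group structure because concatenation of two such canonical homotopies in the contractible $\tilde M$ is again canonical. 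The evaluation loop $t\mapsto s([f])(t)(x_0)$ lifts to a path from $\tilde x_0$ to itself in simply-connected $\tilde M$, hence is null-homotopic in $M$; a computation paralleling Lemma~\ref{lem:classification of pi1(C(S,M), j) for an aspherical manifold} then gives $\Phi(s([f])\alpha s([f])^{-1})=\Phi(\alpha)$ for every $\alpha\in\mathcal{K}(M,\Sigma)$, and by injectivity of $\Phi$ we conclude that $s([f])$ centralizes $\mathcal{K}$, giving the direct product $\Pi(M,\Sigma)\cong\pi_1(M)\times\MCG(\Sigma)$. For the vanishing of $\deg$, observe that for $\alpha\in\mathcal{K}(M,\Sigma)$ the map $\hat\alpha_*$ on $\pi_1(\Sigma\times S^1)$ lands in the cyclic subgroup $\langle\Phi(\alpha)\rangle$ (since $j_*=0$ on the $\Sigma$ factor), so $\hat\alpha$ factors through the intermediate cover of $M$ corresponding to $\langle\Phi(\alpha)\rangle$; this cover is either the contractible $\tilde M$ or a non-compact $3$-manifold with cyclic $\pi_1$ (using that $\pi_1(M)$ is torsion-free for aspherical $M$), and in either case $H_3$ vanishes, forcing $\deg(\hat\alpha)=0$.

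For part~(2), assume $\Mdy(M,\Sigma)=\MCG(\Sigma)$ and $M$ is irreducible. By Lemma~\ref{lem:homotopically trivial} it suffices to show that $j_*:\pi_1(\Sigma)\to \pi_1(M)$ is trivial. Lemma~\ref{lem:monodromy-fundgp} provides, for every $f\in\Homeo(\Sigma)$, an element $g_f\in\pi_1(M)$ with $j_*\circ f_*=\iota_{g_f}\circ j_*$; since inner automorphisms act trivially on $H_1(M)$, passing to abelianizations gives $j_*^{\mathrm{ab}}\circ f_*^{\mathrm{ab}}=j_*^{\mathrm{ab}}$ on $H_1(\Sigma)$. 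The image of $\MCG(\Sigma)$ in $\Aut(H_1(\Sigma))$ contains $\mathrm{Sp}(2g,\ZZ)$, which acts transitively on primitive vectors of $\ZZ^{2g}$, so the corresponding coinvariants vanish and thus $j_*^{\mathrm{ab}}=0$; that is, $j_*(\pi_1(\Sigma))\subseteq [\pi_1(M),\pi_1(M)]$. The main obstacle is promoting this homological statement to $j_*=0$. The strategy is to set $H:=j_*(\pi_1(\Sigma))$ and observe that the condition induces a homomorphism $\MCG(\Sigma)\to\Out(H)$ which factors through $N_{\pi_1(M)}(H)/C_{\pi_1(M)}(H)$; if $H\neq 1$, then a case analysis on the genus of $\Sigma$ (trivial for $S^2$; for $T^2$ ruling out $H\cong\ZZ^2$ or $(\ZZ/n)^2$ because $\GL(2,\ZZ)$ cannot be realized as $N/C$ of an abelian subgroup of a $3$-manifold group; and similarly for higher genus) together with the classification of irreducible $3$-manifolds (spherical, $S^2\times S^1$, or aspherical, with JSJ/geometrization bounds on normalizer/centralizer quotients) shows that the target of this homomorphism would be impossibly large, yielding a contradiction.
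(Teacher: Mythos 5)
Your part (1) is correct, and you take a somewhat different route from the paper in two places. For the direct product structure, the paper defines a single map $\Psi:\Pi(M,\Sigma)\to\pi_1(M,x_0)$ (evaluation at the basepoint, well-defined because every loop on $\Sigma$ dies in $\pi_1(M)$) and shows $\Psi\times\partial_+$ is an isomorphism via a five-lemma-style diagram chase; you instead build a splitting $s:\MCG(\Sigma)\to\Pi(M,\Sigma)$ from canonical homotopies in the contractible universal cover and then check centralization. Both work. For the vanishing of $\deg$, the paper argues by cohomological dimension (a cyclic subgroup cannot have cohomological dimension $3$), while you factor $\hat\alpha$ through an intermediate cover with cyclic fundamental group and note that $H_3$ of that cover is zero; both are valid, and your observation that $\mathscr{I}(\pi_1(J(\Sigma,M),j))$ is literally trivial (not merely killed by $\Phi$) when $j_*=0$ is a small simplification over the paper's phrasing in the low-genus case.

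Your part (2), however, has a genuine gap, and you flag it yourself. The passage from the abelianized statement $j_*^{\mathrm{ab}}=0$ to $j_*=0$ is exactly the content of the theorem, and the concluding ``case analysis on the genus of $\Sigma$ \dots together with the classification of irreducible $3$-manifolds \dots shows that the target of this homomorphism would be impossibly large'' is a hope, not an argument. Two concrete problems: (a) the homomorphism $\MCG(\Sigma)\to N_{\pi_1(M)}(H)/C_{\pi_1(M)}(H)$ you construct need not be injective, and a large kernel does not immediately force $H=1$ without a separate argument that the kernel acts with trivial coinvariants on $\pi_1(\Sigma)$ --- which is again circular; (b) when $M$ is, say, a hyperbolic integral homology sphere, $H_1(M)=0$ so the $\mathrm{Sp}(2g,\ZZ)$ step is vacuous and the whole burden falls on the unproven step. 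The paper's proof of part (2) is of a completely different character: it proceeds by induction on the genus of $\Sigma$, invoking Corollary \ref{cor:Haken} to show $\Sigma$ cannot be incompressible when $\Mdy(M,\Sigma)=\MCG(\Sigma)$, then compressing along a disk $D$. If $\partial D$ is non-separating, transitivity of $\MCG(\Sigma)$ on $\Curve(\Sigma)$ kills $j_*$; if $\partial D$ is separating, one shows the two compressed surfaces $\Sigma_1,\Sigma_2$ inherit $\Mdy(M,\Sigma_i)=\MCG(\Sigma_i)$ (extending homotopy motions across the disk using $\pi_2(M)=0$) and applies the inductive hypothesis. You should adopt an argument of this geometric type; a purely group-theoretic reduction as you sketch does not close the gap as written.
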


\begin{proof}
(1) Suppose that $\Sigma$ is homotopically trivial and
$M$ is aspherical.
Pick a base point $x_0\in \Sigma\subset M$, and define 
a homomorphism 
$\Psi : \Pi (M, \Sigma) \to \pi_1(M, x_0) $ as follows.
For an element of $\Pi (M, \Sigma)$,
choose a representative homotopy motion $\alpha$ such that $\alpha (1) (x_0) = x_0$.
Then the element of $\pi_1(M, x_0)$ represented by the closed path
\[ (I, \partial I) \to (M, x_0), ~t  \mapsto \alpha(t) (x_0) \]
does not depend on the choice of a representative $\alpha$,
by the following reason.
Two such closed paths are related, up to homotopy relative to $\partial I$,
by concatenation of a closed path on $\Sigma$ based at $x_0$.
However, since $\Sigma$ is homotopically trivial in $M$, 
any closed path on $\Sigma$ is null-homotopic in $M$.
Thus two such closed paths represent the same element of $\pi_1(M, x_0)$.
We define $\Psi ([\alpha]) \in \pi_1 (M, x_0)$
to be that element.

Suppose first that the genus of $\Sigma$ is at least $2$. 
By Lemma \ref{lem:monodromy group and kernel group}, 
$\mathcal{K} (M, \Sigma)$ can be canonically identified with $\pi_1 (C (\Sigma, M), j)$, and 
the restriction of $\Psi$ to $\pi_1 (C (\Sigma, M), j)$ is nothing but the 
homomorphism $\Phi$ 
in Definition \ref{def:classification of pi1(C(S,M), j) for an aspherical manifold}.
Therefore, we have the following commutative diagram: 
\[
  \xymatrix{
 1 \ar[r] & \pi_1 (C (\Sigma, M), j) \ar[r] \ar[d]_{\Phi}  & \Pi (M, \Sigma) \ar[r]^{\partial_+} \ar[d]_{\Psi \times \partial_+} & \Mdy (M, \Sigma) \ar[r] \ar[d]_{\iota} & 1 \\
 1 \ar[r] & \pi_1 (M, x_0) \ar[r] & \pi_1 (M, x_0) \times \MCG(\Sigma) \ar[r] & \MCG(\Sigma) \ar[r] & 1,
}
\]
where the two rows are exact, and $\iota: \Mdy (M, \Sigma) \to \MCG (\Sigma)$ is the inclusion 
homomorphism. 
Lemmas \ref{lem:monodromy-fundgp} and
\ref{lem:classification of pi1(C(S,M), j) for an aspherical manifold}, respectively,
imply that 
$\iota$ and $\Phi$ are isomorphisms,
so does $\Psi \times \partial_+$.

Suppose that the genus of $\Sigma$ is less than $2$. 
In that case, by replacing $\pi_1 ( C (\Sigma , M) , j)$ with 
$\pi_1 ( C (\Sigma , M) , j) / \mathscr{I} ( \pi_1 ( J (\Sigma, M ) , j) )$, 
the same argument as above still works because 
the homomorphism $\Phi$ vanishes on $\mathscr{I} ( \pi_1 ( J (\Sigma, M ) , j) )$ 
due to the assumption that $\Sigma$ is homotopically trivial. 

The vanishing of $\deg : \mathcal{K}(M,\Sigma)\to \ZZ$ can be seen as follows.
Suppose that $\deg (\alpha) \ne 0$ for some $\alpha \in \mathcal{K}(M,\Sigma)$.
Then the image of $\hat\alpha_*:\pi_1(\Sigma\times S^1)\to \pi_1(M)$ has finite index in $\pi_1(M)$
(cf. \cite[Lemma 15.12]{Hempel76}).
Since $M$ is an aspherical, closed, orientable $3$-manifold, 
this implies that the cohomological dimension of $\image(\hat\alpha_*)$ is $3$.
On the other hand, since $\Sigma$ is homotopically trivial, $\image(\hat\alpha_*)$ is cyclic.
This is a contradiction, because the cohomological dimension of a cyclic group is
$0$, $\infty$, or $1$,
according as it is trivial, nontrivial finite cyclic, or infinite cyclic.  
This completes the proof of (1). 

(2)
Note that the assertion is trivial when $\Sigma = S^2$. 
We assume that the genus of $\Sigma$ is at least $1$, and we show 
the assertion by induction on the genus $g$ of $\Sigma$. 
Suppose that $g=1$ and $\Mdy (M, \Sigma) = \MCG (\Sigma)$. 
By Corollary \ref{cor:Haken}, 
$\Sigma$ cannot be incompressible in $M$. 
Thus, there exists an essential simple closed curve on $\Sigma$ that is null-homotopic in $M$. 
Since $\MCG(\Sigma)$ acts on the set $\Curve(\Sigma)$ of (isotopy classes of) essential simple closed curves on $\Sigma$ transitively, 
every simple closed curve on $\Sigma$ is null-homotopic in $M$. 
Thus, the homomorphism $j_* : \pi_1 (\Sigma) \to \pi_1 (M)$ vanishes, which implies 
by Lemma \ref{lem:homotopically trivial}
that $\Sigma$ is homotopically trivial in $M$, as $M$ is irreducible. 
For the inductive step, suppose that the assertion holds for any surface $\Sigma$ with genus at most $g$. 
Let $\Sigma$ be a closed, orientable surface of genus $g+1$ embedded in $M$ so that 
$\Mdy (M, \Sigma) = \MCG(\Sigma)$. 
Again, by Corollary \ref{cor:Haken}, $\Sigma$ is compressible. 
Let $D$ be a compression disk for $\Sigma$. 
If $\partial D$ is non-separating in $\Sigma$, the proof runs as in the case of $g=1$, for 
$\MCG(\Sigma)$ acts on the set of (isotopy classes of) non-separating simple closed curves on $\Sigma$ transitively, and $\pi_1 (\Sigma)$ is generated by 
elements represented by non-separating simple closed curves. 
Suppose that $\partial D$ is separating. 
Let $\Sigma_1$ and $\Sigma_2$ be the connected surfaces 
obtained by compressing $\Sigma$ along $D$. 
Then we can see that $\Mdy (M, \Sigma_i) = \MCG(\Sigma_i)$ ($i=1,2$) as follows.
Let $\Sigma_1'$ and $\Sigma_2'$ ($i=1,2$) be the closures of components of $\Sigma - \partial D$. 
We regard $\Sigma_i$ as $\Sigma_i' \cup D$ ($i=1,2$), so $\Sigma_1 \cap \Sigma_2 = D$.  
Let $f_1$ be an arbitrary element of $\MCG(\Sigma_1)$. 
We show that $f_1 \in \Mdy (M, \Sigma_1)$. 
We can assume that $f_1(D) = D$. 
Then there exists an element $f_2 \in \MCG (\Sigma_2)$ such that 
$f_2 (D) = D$ and $f_1|_D = f_2 |_D$. 
Let $\bar f:\Sigma\cup D \to M$ be the map obtained by gluing $f_1$ and $f_2$,
and let $f$ be the restriction of $\bar f$ to $\Sigma$.
By the assumption, there exists a homotopy motion $\alpha:\Sigma\times I \to M$
with terminal end $f$.
Then we can extend $\alpha$ to a homotopy motion 
$\bar\alpha:(\Sigma\cup D)\times I \to M$
with terminal end $\bar f$
because $M$ is irreducible and hence $\pi_2(M)=0$.
By restriction, $\bar\alpha$ determines a homotopy motion of 
$\Sigma_1(\subset \Sigma\cup D)$ with terminal end $f_1$, which implies that 
$\Mdy (M, \Sigma_1) = \MCG(\Sigma_1)$. 
Clearly, the same consequence holds for $\Sigma_2$.
By the assumption of induction, both $\Sigma_1$ and $\Sigma_2$ are homotopically trivial in $M$.
Hence the image of $\pi_1(\Sigma\cup D))\cong \pi_1(\Sigma_1)*\pi_1(\Sigma_2)$ in $\pi_1(M)$
is trivial.
Thus $j_*:\pi_1(\Sigma)\to\pi_1(M)$ is the trivial homomorphism, and so $\Sigma$ is homotopically trivial in $M$,
by Lemma \ref{lem:homotopically trivial}. 
\end{proof}

\begin{remark}
{\rm
The assumption that $M$ is aspherical in 
Theorem \ref{thm:homotopy motion group for a local surface}(1) is essential. 
In fact, in 
Theorem \ref{thm:non-zero degree maps 1}(2) 
we will see that 
when $\Sigma$ is a Heegaard surface of $S^3$, which is homotopically trivial, 
the kernel of the homomorphism 
$\Psi \times \partial_+ : \Pi (S^3, \Sigma) \to  
\pi_1 (S^3) \times \MCG (\Sigma) = \MCG (\Sigma)$ 
defined in the above proof consists of infinitely many elements.
}
\end{remark}

\begin{corollary}
\label{cor:homotopy motion group for a local surface}
Let $\Sigma$ be a closed, orientable surface embedded in 
a closed, orientable, irreducible 
$3$-manifold $M$. 
Then $\Sigma$ is homotopically trivial and if and only if $\Mdy(M,\Sigma)  =  \MCG(\Sigma)$.
\end{corollary}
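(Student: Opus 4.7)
The plan is to observe that the corollary follows almost immediately from Theorem \ref{thm:homotopy motion group for a local surface}, once one checks that a piece of the argument for part (1) does not actually use asphericity.

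For the ``only if'' direction, assume $\Sigma$ is homotopically trivial and let $\{h_t\}_{t\in I}$ be a homotopy of maps $\Sigma\to M$ with $h_0=j$ and $h_1(\Sigma)=\{x_0\}$. For any $g\in\MCG(\Sigma)$ I would reproduce verbatim the construction appearing in the proof of Theorem \ref{thm:homotopy motion group for a local surface}(1), namely set
\[
g_t=\begin{cases} h_{2t} & (0\le t\le 1/2),\\ h_{2-2t}\circ g & (1/2\le t\le 1).\end{cases}
\]
This is a well-defined homotopy $\Sigma\times I\to M$ whose initial end is $j$ and whose terminal end is $g$; in particular $g\in\Mdy(M,\Sigma)$. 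The point worth emphasising is that this construction never invokes asphericity of $M$, nor even irreducibility: it only uses the existence of the contracting homotopy $\{h_t\}$. Hence $\Mdy(M,\Sigma)=\MCG(\Sigma)$ whenever $\Sigma$ is homotopically trivial, with no further hypotheses on $M$.

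For the ``if'' direction, the statement is precisely Theorem \ref{thm:homotopy motion group for a local surface}(2), whose hypotheses ($M$ irreducible and $\Mdy(M,\Sigma)=\MCG(\Sigma)$) are exactly what the corollary assumes. So there is nothing new to prove: I would simply cite that part of the theorem.

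I do not anticipate any genuine obstacle, since the corollary is a straightforward packaging of Theorem \ref{thm:homotopy motion group for a local surface}. The only minor subtlety is to notice that the forward implication does not inherit the asphericity hypothesis from Theorem \ref{thm:homotopy motion group for a local surface}(1); the surjectivity-of-$\iota$ argument there proceeds by an explicit construction of a homotopy motion that is independent of asphericity, so it applies under the weaker hypothesis of the corollary.
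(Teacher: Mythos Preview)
Your proof is correct. For the ``if'' direction you and the paper do exactly the same thing (cite Theorem \ref{thm:homotopy motion group for a local surface}(2)). For the ``only if'' direction the paper instead invokes Lemma \ref{lem:monodromy-fundgp}: if $\Sigma$ is homotopically trivial then $j_*$ is the zero map, so $(j\circ f)_*=j_*$ for every $f$, and the sufficiency half of that lemma (which uses irreducibility) gives $f\in\Mdy(M,\Sigma)$. Your route---lifting the explicit surjectivity-of-$\iota$ construction from the proof of Theorem \ref{thm:homotopy motion group for a local surface}(1)---is equally valid and has the small bonus you noted: it shows the forward implication holds without any hypothesis on $M$, whereas the paper's appeal to Lemma \ref{lem:monodromy-fundgp} consumes the irreducibility assumption even in that direction.
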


\begin{proof}
The ``if" part is nothing other than Theorem \ref{thm:homotopy motion group for a local surface}(2), 
and the ``only if" part follows from Lemma \ref{lem:monodromy-fundgp}.
\end{proof}

\section{The group $\Mdy(M,\Sigma)$ for a Heegaard surface and its friends}
\label{sec:The homotopy motion groups of Heegaard surfaces and their friends}

From this section, we are going to 
study the homotopy motion group $\Pi (M, \Sigma)$ and 
related groups 
$\Gamma (M, \Sigma)$ and $\mathcal{K}(M,\Sigma)$ 
for a Heegaard surface $\Sigma$ of a closed, orientable $3$-manifold 
$M$.  
Recall that a closed surface $\Sigma$ in a closed orientable $3$-manifold $M$ 
is called a {\it Heegaard surface} if $\Sigma$ 
separates $M$ into two handlebodies $V_1$ and $V_2$. 
Such a decomposition $M=V_1\cup_{\Sigma}V_2$ is then called a {\it Heegaard splitting} of 
$M$, and the {\it genus} of the splitting is defined to be the genus of $\Sigma$. 

In this section, we mainly consider the group $\Mdy (M, \Sigma)$ rather than 
$\Pi (M, \Sigma)$.
We recall various natural subgroups of $\MCG(\Sigma)$
associated with a Heegaard surface, and describe their relationships with
the group $\Mdy(M,\Sigma)$. 
We also describe the group $\Mdy(M,\Sigma)$ and give answers to 
Questions \ref{question:Refined:Homotopy class of curves in 3-manifold} and \ref{question:motivation}
for the very special cases where $\Sigma$ is either an arbitrary Heegaard surface of $M=S^3$ 
and where $\Sigma$ is a minimal genus Heegaard surface of $M= \#_g (S^2 \times S^1)$.

By definition, the group $\Mdy (M, \Sigma)$ is a subgroup of 
the extended mapping class group $\MCG(\Sigma)$ of the Heegaard surface $\Sigma$. 
For a Heegaard splitting $M=V_1\cup_{\Sigma}V_2$, many other (and similar) subgroups 
of $\MCG (\Sigma)$ associated with the Heegaard splitting $M=V_1\cup_{\Sigma}V_2$ has 
been studied as follows.  

\begin{enumerate}
\item
The {\it handlebody group} $\MCG(V_i)$ of the handlebody $V_i$,
which is identified with a subgroups of $\MCG(\Sigma)$,
by restricting a self-homeomorphism of $V_i$ to its boundary $\partial V_i=\Sigma$.
This has been a target of various works 
(see a survey by Hensel \cite{Hensel} and references therein).
\item
The intersection $\MCG(V_1)\cap \MCG(V_2)$,
which is identified with $\MCG(M,V_1,V_2)$.
This group or 
its orientation-subgroup $\MCG^+(M,V_1,V_2)$ is called the {\it Goeritz group}
of the Heegaard splitting $M=V_1\cup_{\Sigma}V_2$
and it has been extensively studied. 
In particular, the problem of when this group is 
finite, finitely generated, or finitely presented
attracts attention of various researchers (cf. Minsky \cite[Question 5.1]{Gordon07}). 
The work on this problem goes back to Goeritz \cite{Goeritz33}, 
which gave a finite generating set of the Goeritz group of 
the genus-$2$ Heegaard splitting of $S^3$. 
In these two decades, great progress was achieved by many authors 
\cite{Scharlemann04, Namazi07, Akbas08, Cho08, Johnson10, Johnson11a, Cho13, ChoKoda14, ChoKoda15, ChoKoda16, FreedmanScharlemann18, ChoKoda19, IguchiKoda20}, however, 
it still remains open whether the Goeritz group 
a Heegaard splitting of $S^3$ is finitely generated when the genus is at least $4$.  

\item
The group 
$\langle \MCG(V_1), \MCG(V_2)\rangle$
generated by $\MCG(V_1)$ and $\MCG(V_2)$.
Minsky \cite[Question 5.2]{Gordon07} asked when this subgroup is the free product with 
amalgamated subgroup
$\MCG(V_1)\cap\MCG(V_2)$.
A partial answer to this question was given 
by Bestvina-Fujiwara \cite{BestvinaFujiwara17}.
\item
The mapping class group $\MCG(M,\Sigma)$ of the pair $(M,\Sigma)$.
This contains $\MCG(M,V_1,V_2)$ as a subgroup of index $1$ or $2$.
The result of Scharlemann-Tomova \cite{ScharlemannTomova06} says that
the natural homomorphism from $\MCG(M,\Sigma)$ to $\MCG(M)$
is surjective if the Hempel distance $d(\Sigma)$ (see \cite{Hempel01}) is greater than $2g(\Sigma)$.
On the other hand, it is proved by Johnson \cite{Johnson10},
improving the result of Namazi \cite{Namazi07},
that the natural homomorphism $\MCG(M,\Sigma) \to \MCG(M)$
is injective if the Hempel distance $d(\Sigma)$ is greater than $3$.
Hence, the natural homomorphism gives an isomorphism $\MCG(M,\Sigma)\cong\MCG(M)$
if $g(\Sigma)\ge 2$ and $d(\Sigma)>2g(\Sigma)$.
Building on the work of McCullough-Miller-Zimmermann \cite{McCulloughMillerZimmermann89}
on finite group actions on handlebodies,
finite group actions on the pair $(M,\Sigma)$ are extensively studied 
(see Zimmermann \cite{Zimmermann92, Zimmermann20}
and references therein).
\item
The subgroup $G (M, \Sigma) :=\ker(\MCG(M,\Sigma) \to \MCG(M))$,
which forms a subgroup of the group $\Mdy(M,\Sigma)$.  
We can write this group as 
\[ G(M,\Sigma) =
\{ [f] \in \MCG (\Sigma) \mid 
\mbox{$j\circ f$ is ambient isotopic to $j$.} \}, \]
where $j : \Sigma \to M$ is the inclusion map, 
and thus we can think of 
$\Mdy(M,\Sigma)$ as a ``homotopy version" of $G(M,\Sigma)$.
Johnson-Rubinstein \cite{JohnsonRubinstein13} gave systematic constructions
of periodic, reducible, pseudo-Anosov elements in this group.
Johnson-McCullough \cite{JohnsonMcCullough13}
called this group the Goeritz group instead of the one  we described in (2), 
and they used this group to study the 
homotopy type of the space of Heegaard surfaces. 
In particular,
they prove that if $\Sigma$ is a Heegaard surface 
of a closed, orientable, aspherical $3$-manifold $M$,
then, except the case where $M$ is a non-Haken infranilmanifold,
the exact sequence (\ref{JohnsonMcCullough exact sequence for homotopy motion group})
in the introduction is refined to the following exact sequence 
\[
1 \to Z(\pi_1(M))\to \Motion(M,\Sigma) \to G(M,\Sigma)\to 1,
\]
where $\Motion(M,\Sigma)$ is the 
smooth motion group of $\Sigma$ in $M$
\cite[Corollary 1]{JohnsonMcCullough13}.

\item
The group $\Mdy(V_i):=\ker(\MCG(V_i)\to \Out(\pi_1(V_i)))$.
As noted in the introduction, 
the group $\Mdy(V_i)$ is identified with the group
$\Mdy(V_i,\Sigma)<\MCG(\Sigma)$. 
It was shown by Luft \cite{Luft78} that 
its index-$2$ subgroup $\Mdy^+(V_i):=\ker(\MCG^+(V_i)\to \Out(\pi_1(V_i)))$ 
is the {\it twist group}, that is, the subgroup of 
$\MCG^+(V_i)$ generated by the Dehn twists about meridian disks. 
McCullough \cite{McCullough85} proved that $\Mdy(V_i)$ is not finitely generated
by showing that it admits a surjection onto a free abelian group of infinite rank. 
A typical orientation-reversing element of $\Mdy(V_i)$ ($ < \MCG(\Sigma)$)
is the restriction to $\Sigma=\partial V_i$ of
a  vertical $I$-bundle involution of $V_i$. 

\item
The group $\langle \Mdy(V_1), \Mdy(V_2)\rangle$ 
generated by $\Mdy(V_1)$ and $\Mdy(V_2)$,
which is contained in $\Mdy(M,\Sigma)$. 
It was proved by 
Bowditch-Ohshika-Sakuma in \cite[Theorem B]{OhshikaSakuma16}
(see also Bestvina-Fujiwara \cite[Section 3]{BestvinaFujiwara17})
that its orientation-preserving subgroup
$\langle \Mdy^+(V_1), \Mdy^+(V_2)\rangle$ is 
the free product $\Mdy^+(V_1)\ast\Mdy^+(V_2)$
if the Hempel distance $d(\Sigma)$ is high enough.
(The question of whether the same conclusion holds
for $\langle \Mdy(V_1), \Mdy(V_2)\rangle$
is still an open question.)
\end{enumerate}

In summary, the 
subgroups of $\MCG(\Sigma)$ introduced above are related as follows: 
\begin{align*}
&G (M, \Sigma) < \Mdy (M, \Sigma) \cap \MCG (M, \Sigma) < \Mdy (M, \Sigma), 
\\
&  \langle \Mdy (V_1), \Mdy(V_2)\rangle < \Mdy (M, \Sigma) \cap \langle \MCG(V_1), \MCG(V_2) \rangle < 
\Mdy (M, \Sigma). 
\end{align*}

As noted in the introduction, our interest in $\Mdy (M, \Sigma)$ was motivated by
Minsky's Question \ref{question:Minsky} and its refinement 
Question \ref{question:Refined:Homotopy class of curves in 3-manifold},
and our main concern is 
Question \ref{question:motivation}(1) 
about the relationship between $\Mdy (M, \Sigma)$ and its subgroup 
$\langle \Mdy (V_1), \Mdy(V_2)\rangle$. 
We end this section by giving an answer to 
Questions \ref{question:Refined:Homotopy class of curves in 3-manifold} and \ref{question:motivation}
in two very special cases.

\begin{example}
\label{example:Minsky's question for S3}
{\rm
Let $S^3 = V_1 \cup_\Sigma V_2$ be the genus-$g$ Heegaard splitting of $S^3$.   
Recall that the (orientation-preserving) mapping class group 
$\MCG^+(\Sigma)$
is generated by the Dehn twists 
about certain $3 g -1 $ simple closed curves on $\Sigma$ by Lickorish \cite{Lickorish64}, 
where $g$ is the genus of $\Sigma$. 
Since we can find those simple closed curves in $\Delta$, we have 
$\langle \Mdy^+(V_1), \Mdy^+(V_2)\rangle  = \MCG^+ (\Sigma)$. 
It is thus easy to see that 
\[ \langle \Mdy (V_1), \Mdy(V_2)\rangle = \Mdy (S^3, \Sigma) = \MCG(\Sigma) \]
 and 
\[
\langle \Mdy (V_1), \Mdy(V_2)\rangle \Delta = \Mdy (S^3, \Sigma) \Delta 
=  \Curve(\Sigma) = Z.
\]
}
\end{example}

We note that the group $\Gamma(M, \Sigma)$ detects the $3$-sphere 
as in the following meaning. 

\begin{proposition}
\label{prop:monodromy group detects S3}
Let $M = V_1 \cup_\Sigma V_2$ be a Heegaard splitting of a closed, orientable $3$-manifold. 
Then we have $\Mdy (M, \Sigma) = \MCG (\Sigma )$ if and only if 
$M = S^3$. 
\end{proposition}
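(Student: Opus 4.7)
The ``if'' direction is already contained in Example \ref{example:Minsky's question for S3}, where Lickorish's generation of $\MCG^{+}(\Sigma)$ by Dehn twists along meridians yields even $\langle \Mdy(V_1), \Mdy(V_2)\rangle = \MCG(\Sigma)$ when $M = S^3$. For the converse, my plan is to show that the hypothesis $\Mdy(M,\Sigma) = \MCG(\Sigma)$ forces $\pi_1(M) = 1$, and then to invoke Perelman's resolution of the Poincar\'e conjecture to conclude $M \cong S^3$. The case of genus $g = 0$ is immediate, as a genus-$0$ Heegaard surface bounds $3$-balls on both sides, so I would assume $g \geq 1$ throughout the main argument.

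The key input is the \emph{necessary-condition} half of Lemma \ref{lem:monodromy-fundgp}, which holds unconditionally (no irreducibility is required): for every $[f] \in \Mdy(M,\Sigma)$ the homomorphism $(j \circ f)_*$ differs from $j_*$ only by post-composition with an inner automorphism of $\pi_1(M)$. I would pick a non-separating meridian $m$ of $V_1$, which exists for $g \geq 1$ since any standard cut system of $V_1$ consists of non-separating meridians. Since $m$ bounds a disk in $V_1 \subset M$ we have $j_*([m]) = 1$. Applying the lemma to arbitrary $[f] \in \MCG(\Sigma) = \Mdy(M,\Sigma)$ shows that $j_*([f(m)])$ is conjugate in $\pi_1(M)$ to $j_*([m]) = 1$, hence itself trivial. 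By the change-of-coordinates principle, $\MCG(\Sigma)$ acts transitively on the set of isotopy classes of non-separating simple closed curves on $\Sigma$, so $j_*$ kills every such curve. Since $\pi_1(\Sigma)$ is generated by a standard symplectic basis of non-separating loops, it follows that $j_* : \pi_1(\Sigma) \to \pi_1(M)$ is identically trivial.

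From this, I would finish by exploiting the surjection $\pi_1(\Sigma) \twoheadrightarrow \pi_1(V_i)$ coming from the handlebody structure on each side: the factorization of $j_*$ as $\pi_1(\Sigma) \twoheadrightarrow \pi_1(V_i) \to \pi_1(M)$, with trivial composite, forces $\pi_1(V_i) \to \pi_1(M)$ to be trivial for $i = 1,2$, and Seifert--van Kampen then gives $\pi_1(M) = 1$. The main nontrivial ingredient is the Poincar\'e conjecture; the rest of the argument is elementary, and I do not anticipate a serious obstacle beyond correctly invoking the $\MCG(\Sigma)$-transitivity on non-separating simple closed curves and the existence of a non-separating meridian, both of which are standard facts.
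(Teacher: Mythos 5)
Your proof is correct, but it follows a genuinely different route from the paper's. The paper disposes of the ``only if'' direction in one line by citing Corollary \ref{cor:homotopy motion group for a local surface}: $\Mdy(M,\Sigma)=\MCG(\Sigma)$ forces $\Sigma$ to be homotopically trivial, hence $\pi_1(M)=j_*(\pi_1(\Sigma))=1$, and Perelman finishes. That Corollary rests on Theorem \ref{thm:homotopy motion group for a local surface}(2), whose proof is an induction on genus via compressing disks and carries an irreducibility hypothesis on $M$. You instead argue directly from the Heegaard structure: a non-separating meridian is null-homotopic in $M$, the unconditional half of Lemma \ref{lem:monodromy-fundgp} together with change of coordinates then kills $j_*$ on every non-separating simple closed curve and hence on all of $\pi_1(\Sigma)$, and van Kampen gives $\pi_1(M)=1$ before invoking Poincar\'e. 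In effect you generalize the genus-$1$ base case of the paper's Theorem \ref{thm:homotopy motion group for a local surface}(2) to all genera at once, using the existence of a meridian in place of the compressing-disk induction. What your route buys is that it is self-contained, avoids the irreducibility hypothesis entirely (the paper's one-line citation technically leaves the reducible case to the reader), and uses only the easy direction of Lemma \ref{lem:monodromy-fundgp}; what the paper's route buys is brevity, since it reuses the general machinery of Section \ref{sec:homotopically trivial case} that is needed elsewhere anyway. Your handling of genus $0$ and of the ``if'' direction via Example \ref{example:Minsky's question for S3} matches the paper.
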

\begin{proof}
This is straightforward from  
Corollary \ref{cor:homotopy motion group for a local surface}
and 
the Poincar\'{e} conjecture proved by Perelman \cite{Perelman02, Perelman03a, Perelman03b}. 
\end{proof}

\begin{example}
\label{example:Minsky's question for S2 times S1}
{\rm
Let $M= \#_g (S^2 \times S^1)$, and  $M= V_1\cup_{\Sigma}V_2$ the genus-$g$ Heegaard splitting. 
In this case, $M$ is the double of the handlebody $V_1$, and thus $\Delta = \Delta_i = Z$ $(i=1,2)$. 
Further,  we can check easily that  
\[ \Mdy (V_i)  =  \langle \Mdy (V_1), \Mdy(V_2)\rangle = \Mdy (M, \Sigma) \]
and 
\[ 
\langle \Mdy (V_1), \Mdy(V_2)\rangle \Delta = \Mdy (M, \Sigma) \Delta =  Z . 
\]
}
\end{example}

In the above easy examples, the group $\langle \Mdy (V_1), \Mdy(V_2)\rangle$ 
coincides with the whole group $\Mdy (M, \Sigma)$ in an obvious way. 
However, this is not the case in general,
as indicated in the introduction and proved in 
Theorem \ref{thm:well-definedness of homological degree for aspherical manifolds}.

\section{Open book rotations}
\label{sec:Open book rotations}

In this section, 
we first recall the definition of an open book decomposition
and the Heegaard splitting obtained from an open book decomposition. 
We then introduce two homotopy motions of the Heegaard surface,
the ``half book rotation'' $\rho$ and the ``unilateral book rotation'' $\sigma$,
which play key roles in 
the subsequent two sections.

Let $M$ be a closed, orientable $3$-manifold. 
Recall that an \textit{open book decomposition} of $M$ is defined to be the pair $(L, \pi)$, where 
\begin{enumerate}
\item
$L$ is a (fibered) link in $M$; and 
\item
$\pi : M - L \to S^1$ is a fibration such that $\pi^{-1} (\theta)$ is the interior of 
a Seifert surface $\Sigma_\theta$ of $L$ for each $\theta \in S^1$. 
\end{enumerate} 
We call $L$ the \textit{binding} and $\Sigma_\theta$ a \textit{page} of the open book decomposition $(L, \pi)$. 
The monodromy of the fibration $\pi$ is called the {\it monodromy} of $(L, \pi)$. 
We think of the monodromy $\varphi$ of $(L, \pi)$ as an element of
$\MCG(\Sigma_0, \mathrm{rel}\ \partial\Sigma_0)$, 
the mapping class group of $\Sigma_0$ relative to $\partial \Sigma_0$,
i.e., the group of self-homeomorphisms of $\Sigma_0$ 
that fix $\partial\Sigma_0$, 
modulo isotopy fixing $\partial\Sigma_0$. 
The pair $(M, L)$, as well as the projection $\pi$, is 
then recovered from $\Sigma_0$ and $\varphi$. 
Indeed, we can identify $(M, L)$ with 
\[ (\Sigma_0 \times \RR , \partial \Sigma_0 \times \RR ) / \sim ,\]
where $\sim$ is defined by $(x, s) \sim (\varphi (x) , s+1) $ for $x \in \Sigma_0$ and 
$s \in \RR$, and  
$(y, 0) \sim (y , s) $ for $y \in \partial \Sigma_0$ and any $s \in \RR$.
So, we occasionally denote the open book decomposition $(L, \pi)$ 
by $(\Sigma_0,\varphi)$. 
Under this identification,  
the Seifert surface $\Sigma_{\theta}$ is identified with the image $\Sigma\times\{\theta\}$.
We define an $\RR$-action $\{ r_t \}_{t \in \RR}$ on $M$, 
called a {\it book rotation},  by $r_t ([ x, s ]) = [x, s + t]$, 
where $[x,s]$ denotes the element of $M$ represented by $(x,s)$. 

Given an open book decomposition $(L, \pi)$ of $M$, 
we obtain a Heegaard splitting $M = V_1 \cup_\Sigma V_2$, where    
\begin{align*}
V_1 &= \cl(\pi^{-1} ([0, 1/2])) =\pi^{-1} ([0, 1/2]) \cup L 
=\cup_{0\le \theta\le 1/2} \Sigma_{\theta},\\
V_2 &= \cl(\pi^{-1} ([1/2, 1])) =\pi^{-1} ([1/2, 1])\cup L
=\cup_{1/2\le \theta\le 1} \Sigma_{\theta},\\
\Sigma &= \Sigma_{0} \cup \Sigma_{1/2}.
\end{align*}
We call this the Heegaard splitting of $M$ \textit{induced from} the open book decomposition $(L, \pi)$.
For the resulting Heegaard surface $\Sigma$, 
we define two particular 
homotopy motions in $M$. 
The first one, $\rho = \rho_{(L, \pi)}=\rho_{(\Sigma_0,\varphi)}$, is defined by restricting 
the book rotation, with time parameter rescaled by the factor $1/2$,
to the Heegaard surface $\Sigma$, namely 
$\rho(t) =  r_{t/2}|_{\Sigma}$, 
see Figure \ref{fig_rho}.
\begin{figure}[htbp]
\centering\includegraphics[width=14cm]{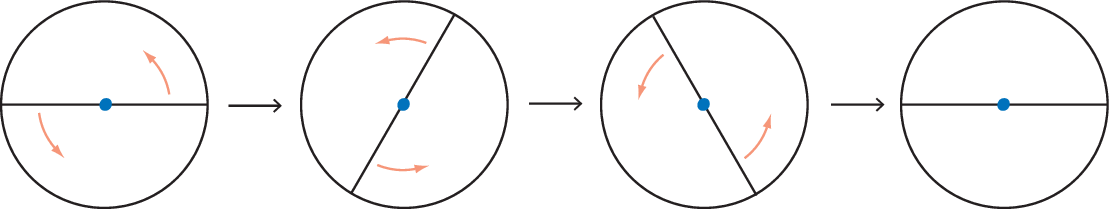}
\begin{picture}(400,0)(0,0)
\put(35,38){\begin{small}\color{blue}$L$\end{small}}
\put(148,44){\begin{small}\color{blue}$L$\end{small}}
\put(256,50){\begin{small}\color{blue}$L$\end{small}}
\put(357,56){\begin{small}\color{blue}$L$\end{small}}
\put(46,41){\begin{scriptsize}$f_0(\Sigma_0)$\end{scriptsize}}
\put(2,55){\begin{scriptsize}$f_0(\Sigma_{1/2})$\end{scriptsize}}
\put(118,63){\begin{scriptsize}$f_{1/3}(\Sigma_0)$\end{scriptsize}}
\put(138,34){\begin{scriptsize}$f_{1/3}(\Sigma_{1/2})$\end{scriptsize}}
\put(246,66){\begin{scriptsize}$f_{2/3}(\Sigma_0)$\end{scriptsize}}
\put(221,33){\begin{scriptsize}$f_{2/3}(\Sigma_{1/2})$\end{scriptsize}}
\put(327,55){\begin{scriptsize}$f_1(\Sigma_{0})$\end{scriptsize}}
\put(363,41){\begin{scriptsize}$f_1(\Sigma_{1/2})$\end{scriptsize}}
\end{picture} 
\caption{The homotopy motion $\rho = \{f_t\}_{t \in I}$.}
\label{fig_rho}
\end{figure}
The second one, $\sigma = \sigma_{(L, \pi)}=\sigma_{(\Sigma_0,\varphi)}$, is defined by
\[\sigma(t) (x) = 
\left\{ 
\begin{array}{ll} 
r_t (x) & (x \in \Sigma_0)
\\
x & (x \in \Sigma_{1/2}),
\end{array}
\right.\]
see Figure \ref{fig_sigma}. 
\begin{figure}[htbp]
\centering\includegraphics[width=14cm]{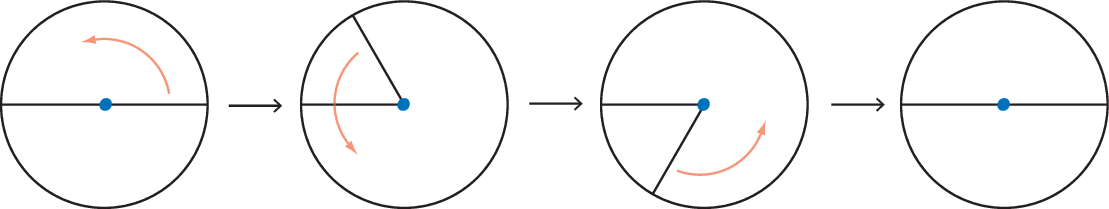}
\begin{picture}(400,0)(0,0)
\put(35,56){\begin{small}\color{blue}$L$\end{small}}
\put(148,50){\begin{small}\color{blue}$L$\end{small}}
\put(256,47){\begin{small}\color{blue}$L$\end{small}}
\put(357,56){\begin{small}\color{blue}$L$\end{small}}
\put(46,41){\begin{scriptsize}$g_0(\Sigma_0)$\end{scriptsize}}
\put(3,41){\begin{scriptsize}$g_0(\Sigma_{1/2})$\end{scriptsize}}
\put(136,70){\begin{scriptsize}$g_{1/3}(\Sigma_0)$\end{scriptsize}}
\put(110,41){\begin{scriptsize}$g_{1/3}(\Sigma_{1/2})$\end{scriptsize}}
\put(243,30){\begin{scriptsize}$g_{2/3}(\Sigma_0)$\end{scriptsize}}
\put(217,57){\begin{scriptsize}$g_{2/3}(\Sigma_{1/2})$\end{scriptsize}}
\put(366,41){\begin{scriptsize}$g_1(\Sigma_{0})$\end{scriptsize}}
\put(326,41){\begin{scriptsize}$g_1(\Sigma_{1/2})$\end{scriptsize}}
\end{picture} 
\caption{The homotopy motion $\sigma = \{ g_t \}_{t \in I}$.}
\label{fig_sigma}
\end{figure}
We call $\rho$ and $\sigma$, respectively,
the \textit{half book rotation} and the \textit{unilateral book rotation}
associated with the open book decomposition $(L,\pi)$ (or $(\Sigma_0,\varphi)$).

The elements of the group $\Mdy(M,\Sigma)$ obtained as
the terminal ends
$\rho(1)=\partial_+(\rho)$ and $\sigma(1)=\partial_+(\sigma)$
play a key role in the proof of 
the main Theorem \ref{thm:well-definedness of homological degree for aspherical manifolds}.
We note that $\rho(1)$ 
is orientation-reversing whereas
$\sigma(1)$ 
is orientation-preserving,
and they are related as follows.

\begin{lemma}
$\sigma(1)=\rho(1)\circ h$, 
where 
$h$ is the restriction to $\Sigma$ of the vertical $I$-bundle involution on $V_1$ 
with respect to the natural $I$-bundle structure given by $(L, \pi)$.
\end{lemma}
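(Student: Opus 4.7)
The plan is to unpack the explicit description $M \cong (\Sigma_0 \times \RR)/{\sim}$ coming from the open book decomposition $(L,\pi)=(\Sigma_0,\varphi)$, make the involution $h$ concrete, and then verify the stated identity by a direct pointwise computation on $\Sigma = \Sigma_0 \cup \Sigma_{1/2}$.

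First, I would make $h$ explicit. The natural $I$-bundle structure on $V_1 = \cup_{0 \le \theta \le 1/2} \Sigma_\theta$ induced by $(L,\pi)$ has interval fibers $\{x\}\times[0,1/2]$ transverse to the pages, over $x \in \Sigma_0^{\circ}$ (degenerating to points on $L=\partial\Sigma_0$). Consequently, the vertical $I$-bundle involution of $V_1$ is the fiberwise reflection $[x,s] \mapsto [x,1/2-s]$, which fixes $L$ and the central page $\Sigma_{1/4}$ pointwise and swaps the two boundary pages. Restricted to $\Sigma$, this involution $h$ sends $[x,0]\in\Sigma_0$ to $[x,1/2]\in\Sigma_{1/2}$ and $[x,1/2]\in\Sigma_{1/2}$ to $[x,0]\in\Sigma_0$.

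Second, using the key consequence $[x,1]=[\varphi^{-1}(x),0]$ of the identification $(x,s)\sim(\varphi(x),s+1)$, I would compute both sides of the claimed identity on each of the two pages. On $\Sigma_0$, for a point $[x,0]$,
\[
\sigma(1)([x,0]) = r_1([x,0]) = [x,1] = [\varphi^{-1}(x),0] = r_{1/2}([x,1/2]) = \rho(1)(h([x,0])).
\]
On $\Sigma_{1/2}$, for a point $[x,1/2]$,
\[
\sigma(1)([x,1/2]) = [x,1/2] = r_{1/2}([x,0]) = \rho(1)(h([x,1/2])).
\]
Since the two pages $\Sigma_0$ and $\Sigma_{1/2}$ cover $\Sigma$ and both maps fix $L$ pointwise (so continuity across $L$ is automatic), this gives $\sigma(1)=\rho(1)\circ h$.

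There is no substantive obstacle: once the natural $I$-bundle structure on $V_1$ is correctly identified so that its two horizontal boundary components are exactly the pages $\Sigma_0$ and $\Sigma_{1/2}$, the lemma is essentially definitional. Conceptually, the identity expresses the fact that the unilateral book rotation $\sigma(1)$ (which rotates $\Sigma_0$ through one full turn while fixing $\Sigma_{1/2}$) is realized by first swapping the two pages via $h$ and then applying the rigid half rotation $\rho(1)$; this also correctly accounts for the orientations, since $h$ and $\rho(1)$ are each orientation-reversing on $\Sigma$ while $\sigma(1)$ is orientation-preserving.
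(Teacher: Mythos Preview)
Your proof is correct and follows essentially the same approach as the paper: both arguments write down the explicit formulas for $h$, $\rho(1)$, and $\sigma(1)$ in the coordinates $[x,s]$ coming from the identification $(M,L)\cong(\Sigma_0\times\RR,\partial\Sigma_0\times\RR)/{\sim}$, and then verify the identity by direct pointwise computation on the two pages $\Sigma_0$ and $\Sigma_{1/2}$. Your additional derivation of $h$ from the fiberwise reflection $[x,s]\mapsto[x,1/2-s]$ and the concluding orientation remark are helpful elaborations but do not change the underlying strategy.
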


\begin{proof}
Under the identification 
$(M, L)=(\Sigma_0 \times \RR , \partial \Sigma_0 \times \RR ) / \sim$,
the following formulas hold for every $x\in\Sigma_0$.
\begin{align*}
h([x,0])&=[x,1/2], & h([x,1/2])&=[x,0], \\
\rho(1)([x,0])&=[x,1/2], & \rho(1)([x,1/2])&=[x,1]=[\varphi^{-1}(x),0], \\
\sigma(1)([x,0])&=[x,1]=[\varphi^{-1}(x),0], & \sigma(1)([x,1/2])&=[x,1/2] .
\end{align*}
By using these formulas, we see that the following hold for every $x\in\Sigma_0$,
which in turn imply the desired identity.
\begin{align*}
\rho(1)\circ h([x,0])
=\rho(1)([x,1/2])
=[\varphi^{-1}(x),0]=\sigma(1)([x,0]) , \\
\rho(1)\circ h([x,1/2]) 
=\rho(1)([x,0])
=[x,1/2]=\sigma(1)([x,1/2]).
\end{align*}
\end{proof}

For open book decompositions with trivial monodromies,
we have the following lemma.
Though it should be well-known, 
we provide a brief proof here, for
this plays 
an important role in Section \ref{sec:The group K(M,Sigma) for Heegard surfaces}.

\begin{lemma}
\label{lemma:trivial mondromy openbook1}
Let $M$ be a closed, orientable $3$-manifold 
that admits
an open book decomposition $(\Sigma_0,\id_{\Sigma_0})$ with trivial monodromy $\id_{\Sigma_0}$,
where $\Sigma_0$ is a compact, connected, orientable surface embedded in $M$. 
Let $\Sigma$ be the Heegaard surface of $M$
associated with the open book decomposition $(\Sigma_0,\id_{\Sigma_0})$. 
Then the following hold.
\begin{enumerate}
\renewcommand{\labelenumi}{$(\arabic{enumi})$}
\item
$M \cong \#_g (S^2 \times S^1)$, where $g$ is the first Betti number of $\Sigma_0$,
and $\Sigma$ is the unique minimal genus Heegaard surface of $M$. 
\item
The unilateral book rotation
$\sigma$ associated with $(\Sigma_0,\id_{\Sigma_0})$
determines a nontrivial element of $\mathcal{K}(M,\Sigma)$
of degree $1$
under a suitable orientation convention.  
\end{enumerate}
\end{lemma}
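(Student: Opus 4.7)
The plan is as follows. For part (1), I would unwind the open book construction with $\varphi = \id_{\Sigma_0}$. The defining identifications reduce to $(x, s) \sim (x, s+1)$ and $(y, 0) \sim (y, s)$ for $y \in \partial\Sigma_0$, so $M$ is $\Sigma_0 \times S^1$ with each circle $\{y\} \times S^1$ (for $y \in \partial\Sigma_0$) collapsed to a point. A local polar-coordinate computation near the binding shows that this quotient is a $3$-manifold and is canonically homeomorphic to $\partial(\Sigma_0 \times D^2)$, obtained by attaching, for each boundary component $C$ of $\Sigma_0$, a solid torus $C \times D^2$ with meridians matching the $S^1$-fibers. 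Viewing $\Sigma_0$ as built from a single $0$-handle with $g = b_1(\Sigma_0)$ $1$-handles and taking the product with $D^2$, one sees that $\Sigma_0 \times D^2$ is a $4$-dimensional handlebody with the same handle count, whose boundary is the standard $\#_g(S^2 \times S^1)$. The surface $\Sigma$ then has genus $g$ because each half $V_i$ deformation retracts onto $\Sigma_0$ and so is a handlebody of genus $g$; uniqueness of $\Sigma$ as the minimal genus Heegaard surface follows from the classical uniqueness of minimal genus Heegaard splittings of $\#_g(S^2 \times S^1)$ (Haken--Waldhausen).

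For part (2), I would first check that $\sigma$ represents an element of $\mathcal{K}(M,\Sigma) = \ker \partial_+$ by evaluating the terminal end directly: for $x = [y, 0] \in \Sigma_0$, one has $\sigma(1)(x) = r_1([y, 0]) = [y, 1] = [y, 0] = x$ via the trivial monodromy identification, and for $x \in \Sigma_{1/2}$ the formula gives $\sigma(1)(x) = x$ at once. Hence $\sigma(1) = \id_\Sigma$.

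Finally, I would compute $\deg(\sigma)$ by counting preimages of a generic regular value of $\hat\sigma \colon \Sigma \times S^1 \to M$. Taking $p = [y_0, 1/4]$ with $y_0$ in the interior of $\Sigma_0$, the component $\hat\sigma|_{\Sigma_{1/2} \times S^1}$ has image in $\Sigma_{1/2}$ and so misses $p$, while $\hat\sigma|_{\Sigma_0 \times S^1}$ sends $([y, 0], t) \mapsto [y, t]$ and hits $p$ at the single point $([y_0, 0], 1/4)$. Thus $|\deg \hat\sigma| = 1$, and orienting $\Sigma$, $S^1$, and $M$ compatibly pins the sign to $\deg(\sigma) = 1$. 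Nontriviality of $\sigma$ in $\mathcal{K}(M,\Sigma)$ is then immediate, since by Lemma~\ref{lem:degree-KernelGroup} the map $\deg \colon \mathcal{K}(M,\Sigma) \to \ZZ$ is a well-defined homomorphism. The only mildly delicate step is the orientation bookkeeping needed to settle the sign of the degree, which is straightforward once the product orientation on $\Sigma_0 \times S^1$ is matched with the ambient orientation of $M$ near the page $\Sigma_0$; everything else reduces to direct computation in the open book model.
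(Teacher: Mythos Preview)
Your argument is correct. Part (2) matches the paper's proof almost verbatim: the paper simply observes that $\sigma(1)=\id_\Sigma$ because the monodromy is trivial, and asserts that $\deg(\hat\sigma)=1$ is ``obvious''; you spell out the preimage count that justifies this, which is a welcome addition rather than a deviation.

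For part (1) you take a genuinely different route. The paper argues entirely in dimension three: it chooses a complete non-separating arc system $\{\delta_i\}_{1\le i\le g}$ on $\Sigma_0$, observes that each $\delta_i\times\RR$ descends to a $2$-sphere in $M$ (since the monodromy is trivial and the endpoints lie on the binding), and these $g$ disjoint non-separating spheres cut $M$ into a single $3$-ball, yielding $M\cong\#_g(S^2\times S^1)$ directly. Your approach instead passes through dimension four, identifying $M$ with $\partial(\Sigma_0\times D^2)$ and then reading off the handle decomposition of $\Sigma_0\times D^2$ as a $4$-dimensional $1$-handlebody. Both arguments are short and standard; the paper's is slightly more self-contained (it never leaves dimension three and makes the reducing spheres explicit), while yours connects the statement to the familiar fact that boundaries of $4$-dimensional $1$-handlebodies are exactly the $\#_g(S^2\times S^1)$. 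The citations for uniqueness of the minimal-genus splitting agree in substance.
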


\begin{proof}
Let $\{\delta_i\}_{1\le i\le g}$ be a complete non-separating arc system of $\Sigma_0$,
namely a family of disjoint non-separating arcs which cuts $\Sigma_0$ into a disk.
Then the image of $\{\delta_i\times \RR\}_{1\le i\le g}$ 
in $M=(\Sigma_0 \times \RR , \partial \Sigma_0 \times \RR ) / \sim$
gives a family of disjoint non-separating spheres which cut $M$ into a $3$-ball.
Hence $M \cong \#_g (S^2 \times S^1)$ and $\Sigma$ is a genus $g$ Heegaard surface of $M$.
Since $\#_g (S^2 \times S^1)$ admits a unique Heegaard splitting of genus $g$
by Waldhausen \cite{Waldhausen68b}, Bonahon-Otal \cite{BonahonOtal83} and Haken \cite{Haken68},
we obtain the assertion (1).
Since the monodromy of the open book decomposition is the identity map,
the terminal end $\sigma(1)$ of $\sigma$ is the identity map. 
Thus $\sigma$ 
determines an element of $\mathcal{K}(M,\Sigma)$.
Obviously, $\deg (\sigma) =\deg (\hat\sigma:\Sigma\times S^1\to M) =1$, 
and so we obtain the assertion (2). 
\end{proof}

\section{The group $\mathcal{K}(M,\Sigma)$ for Heegaard surfaces of closed orientable $3$-manifolds}\label{sec:The group K(M,Sigma) for Heegard surfaces}

Let $M = V_1 \cup_\Sigma V_2$ be a Heegaard splitting of 
a closed, orientable $3$-manifold, and 
$j : \Sigma \to M$ the inclusion map. 
Recall the homomorphism
$\deg : \mathcal{K}(M,\Sigma)\to\ZZ$ introduced in 
Definition \ref{def:degree-KernelGroup}, 
and the fact that this homomorphism does not vanish if and only if
the pair $(M, \Sigma)$ 
is dominated by $\Sigma \times S^1$ (cf. Definition \ref{def:Sigma-domination}).
In this section, 
we discuss the problem of 
which pair $(M,\Sigma)$ 
of a closed, orientable $3$-manifold and its Heegaard surface $\Sigma$
is dominated by $\Sigma\times S^1$.

For irreducible $3$-manifolds, we obtain the following complete information
including the structure of the group $\mathcal{K}(M,\Sigma)$,
where $\Phi:\mathcal{K}(M,\Sigma)\to Z(\pi_1(M))$ is the 
the homomorphism intorduced in Definition
\ref{def:classification of pi1(C(S,M), j) for an aspherical manifold}.
(Note that, since $\Sigma$ is a Heegaard surface of $M$, we have $j_*(\pi_1(\Sigma))=\pi_1(M)$, 
thus, the codomain of $\Phi$
is the center $Z(\pi_1(M))$ of the fundamental group of $M$.)

\begin{theorem}
\label{thm:non-zero degree maps 1}
Let $M$ be a closed, orientable, irreducible $3$-manifold 
and $\Sigma$ a Heegaard surface of $M$. 
\begin{enumerate}
\renewcommand{\labelenumi}{$(\arabic{enumi})$}
\item
Suppose that $M$ is aspherical.
Then $(M, \Sigma)$ is not dominated by $\Sigma \times S^1$.
To be precise, 
$\Phi$ gives an isomorphism $\mathcal{K} (M, \Sigma) \cong Z(\pi_1(M))$,
and the homomorphism $\deg : \mathcal{K} (M, \Sigma)\to\ZZ$ vanishes. 
Thus if $M$ is a Seifert fibered space with orientable base orbifold,
then $\mathcal{K} (M, \Sigma)$ is isomorphic to $\ZZ^3$ or $\ZZ$ 
according to whether $M$ is the $3$-torus $T^3$ or not;
otherwise, $\mathcal{K} (M, \Sigma)$ is the trivial group.
\item
Suppose that $M$ is non-aspherical, or equivalently,
$M$ has the geometry of $S^3$.
Then $(M, \Sigma)$ is dominated by $\Sigma \times S^1$.
To be precise, the following holds.
\begin{enumerate}
\item[{\rm (i)}]
If $g(\Sigma)\ge 2$,
then the product homomorphism $\Phi\times \deg$
induces an isomorphism 
$\mathcal{K} (M, \Sigma) \cong Z(\pi_1(M))\times |\pi_1(M)|\cdot\ZZ$.
\item[{\rm (ii)}]
If $g(\Sigma) \leq 1$,
then the homomorphism $\deg$
induces an isomorphism
$\mathcal{K} (M, \Sigma) \cong |\pi_1(M)|\cdot\ZZ$. 
\end{enumerate}
\end{enumerate}
\end{theorem}

\noindent The proof of Theorem \ref{thm:non-zero degree maps 1} will be given in 
Subsections \ref{subsection:aspherical-case} and \ref{subsection:aspherical-case2}. 

For $3$-manifolds which are not necessarily irreducible, 
we obtain the following partial result, whose proof will be given in 
Subsection \ref{subsection:general-case}.

\begin{theorem}
\label{thm:non-zero degree maps 2}
Let $M$ be a closed, orientable $3$-manifold
and $\Sigma$ a Heegaard surface of $M$. 
\begin{enumerate}
\renewcommand{\labelenumi}{$(\arabic{enumi})$}
\item
If $M$ contains an aspherical prime summand, then 
$(M, \Sigma)$ is not dominated by $\Sigma \times S^1$.
\item
If $M=\#_g (S^2 \times S^1)$ for some $g\ge 1$,
then $(M,\Sigma)$ is dominated by $\Sigma \times S^1$.
To be precise, $\deg (\mathcal{K} (M, \Sigma)) = \ZZ$.
\item
If $M=\RP^3 \# \RP^3$,
then $(M,\Sigma)$ is dominated by $\Sigma \times S^1$.
To be precise, $\deg (\mathcal{K} (M, \Sigma)) = 2\ZZ$.
\end{enumerate}
\end{theorem}

By Kneser-Milnor prime decomposition theorem \cite{Kneser1929, Milnor1962} (cf. \cite{Hempel76, Jaco80}),
every closed, orientable $3$-manifold $M$ admits a unique prime decomposition,
and by the geometrization theorem 
established by Perelman 
\cite{Perelman02, Perelman03a, Perelman03b}
(see \cite{BBMBP, CaoZhu06, KleinerLott08, MorganTian07, MorganTian14}
for exposition),
each prime factor admits a unique decomposition by tori into geometric manifolds,
i.e., those which have one of Thurston's $8$ geometries. 
This together with the sphere theorem implies that 
for a closed, orientable $3$-manifold $M$, the following three conditions are equivalent: 
(i) $M$ is aspherical, (ii) $M$ is irreducible and $\pi_1 (M)$ is not finite, (iii) the universal covering space of $M$ is 
homeomorphic to $\RR^3$. 
If $M$ is non-aspherical
then either 
$M$ admits the geometry of $S^3$ or $S^2\times \RR$, or 
$M$ is non-prime  (cf. \cite[Chapter 1]{BBMBP}).
Here $\RP^3 \# \RP^3$ is the unique geometric $3$-manifold which is non-prime. 
Thus, Theorems \ref{thm:non-zero degree maps 1}
and \ref{thm:non-zero degree maps 2}  
especially imply the following corollary.

\begin{corollary}
\label{cor:non-zero degree maps}
Let $M$ be a closed, orientable, $3$-manifold 
which is either prime or geometric,
and let $\Sigma$ a Heegaard surface of $M$. 
Then $(M,\Sigma)$ is dominated by $\Sigma \times S^1$
if and only if $M$ is non-aspherical,
namely $M$ admits the geometry of $S^3$ or $S^2\times \RR$.
\end{corollary}

We do not know, however, what happens when 
$M$ is non-prime and $M$ has no aspherical prime summand, 
in other words, each prime summand of $M$ is $S^2 \times S^1$, or has the geometry of $S^3$, 
except when $M=\#_g (S^2 \times S^1)$ or $\RP^3 \# \RP^3$.

\begin{question}
\label{question:nonprime-general}
{\rm
Let $M=\#_{i=1}^n M_i$ ($n\ge 2$) be a closed, orientable non-prime $3$-manifold 
such that each $M_i$ is either $S^2\times S^1$ or admits the geometry of $S^3$.
When is a Heegaard surface $\Sigma$ of $M$ dominated by $\Sigma \times S^1$?}
\end{question}

The remainder of this section is devoted to the proof of 
Theorems \ref{thm:non-zero degree maps 1} and \ref{thm:non-zero degree maps 2}.

\subsection{Proof of Theorem \ref{thm:non-zero degree maps 1}(1)}
\label{subsection:aspherical-case}
Suppose that $M$ is aspherical.
Since a closed, orientable, irreducible $3$-manifold that admits Heegaard splitting of genus at most $1$ is either $S^3$ or a lens space, 
which are not aspherical, we have 
$g(\Sigma)\ge 2$ and 
$\mathcal{K} (M, \Sigma) \cong \pi_1(C (\Sigma, M), j)$
by Lemma \ref{lem:monodromy group and kernel group}.
We see by Lemma \ref{lem:classification of pi1(C(S,M), j) for an aspherical manifold} that
$\Phi:\mathcal{K} (M, \Sigma) \to Z(\pi_1(M))$ is an isomorphism.
If $Z (\pi_1 (M)) = 1$, there is nothing to prove.
Suppose that $Z (\pi_1 (M))$ is non-trivial. 
By the Seifert fibered space conjecture proved by 
Gabai \cite{Gabai92} and Casson-Jungreis \cite{CassonJungreis94}, 
$M$ is then a Seifert fibered space with orientable base orbifold. 
If $M$ is not the $3$-torus $T^3$, then, $M$ admits a unique Seifert fibration with 
orientable base orbifold, and the center $Z(\pi_1(M))\cong \ZZ$ is generated by an element 
represented by a regular fiber of the Seifert fibration of $M$, see Jaco \cite[VI]{Jaco80}. 
When $M = T^3$, we have $Z(\pi_1(M)) = \pi_1 (M) \cong \ZZ^3$, and 
any primitive element of $Z(\pi_1(M))$ can be realized as a regular fiber 
of a Seifert fibration of $M$. 
In any case, let $z$ be a primitive element of $Z(\pi_1 (M))$. 
Equip $M$ with a Seifert fibration where $z$ is represented by its regular fiber. 
Fix a faithful action of $S^1 = \RR / \ZZ$ on $M$ that is compatible with the Seifert fibration.   
Let $\alpha_z$ be the homotopy motion of $\Sigma$ defined by
$\alpha_z(t)(x)=t\cdot x$ for $t\in I$ and $x\in\Sigma$,
where $t\cdot x$ is the image of $x$ by the action of $t\in S^1$.
Then we see that $\alpha_z$ determines an element of $\mathcal{K} (M, \Sigma)$
and that $\Phi([\alpha_z])=z$.
Thus we have only to show that the degree of the map $\hat\alpha_z :\Sigma\times S^1\to M$ 
is $0$.
To this end, let $Y_1$ be a spine of the handlebody $V_1$ in $M$ 
bounded by the Heegaard surface $\Sigma$, 
and let $\{r_t\}_{t\in I}$ be a strong deformation retraction of $V_1$ onto $Y_1$,
namely $r_0=\id_{V_1}$, $r_t|_{Y_1}=\id_{Y_1}$ ($t\in I$), and 
$r_1(V_1)=Y_1$.
Define a map $H=\{h_s\}_{s\in I}:(\Sigma\times S^1)\times I \to M$ by
$H(x,t,s)=t\cdot r_s(x)$. Then $h_0=\hat\alpha_z$
and $h_1(\Sigma\times S^1)=S^1\cdot Y_1$.
Since $Y_1$ is $1$-dimensional, the image $h_1(\Sigma\times S^1)$
is a proper subset of $M$. 
Hence $\deg ( \hat\alpha_z ) = \deg ( h_1 ) = 0$.
This completes the proof of Theorem \ref{thm:non-zero degree maps 1}(1). 

\subsection{Proof of Theorem \ref{thm:non-zero degree maps 1}(2)}
\label{subsection:aspherical-case2}

Suppose that $M$ is non-aspherical.
Since $M$ is irreducible by assumption of the theorem,
the geometrization theorem implies that 
$M$ admits the geometry of $S^3$, namely
$M\cong S^3/G$ for some finite subgroup $G$
of $\SO(4)$ acting freely on $S^3$.

Suppose first that $g(\Sigma)\ge 2$.
Then $\mathcal{K} (M, \Sigma) \cong \pi_1(C (\Sigma, M), j)$
by Lemma \ref{lem:monodromy group and kernel group}.
We show that the homomorphism
$\Phi\times \deg:\pi_1(C (\Sigma, M), j)\to Z(G)\times \ZZ$
is injective and that its image is
$Z(G)\times |G|\cdot\ZZ$.

To see the injectivity, pick an element 
$[\alpha] \in \ker(\Phi\times \deg)$, and  
consider the maps $\hat \alpha$ and $\hat e:\Sigma\times S^1 \to M$
induced by $\alpha$ and the identity motion $e$, respectively.
Since $[\alpha]\in \ker \Phi$,
the homomorphisms $\hat\alpha_*$ and
$\hat e_*:\pi_1(\Sigma\times S^1)\to \pi_1(M)$ are equivalent.
Since $\deg (\hat\alpha)=\deg([\alpha])=0=\deg([e])=\deg (\hat e)$,
we see by
Proposition \ref{prop:obstruction3}
that $\hat \alpha$ and $\hat e$ are homotopic.
Thus $[\alpha]$ and $[e]$ are conjugate and so identical in 
$\pi_1(C (\Sigma, M), j )$.
Hence $\Phi\times \deg$ is injective.

Next we show that
the image of $\Phi\times \deg$ is equal to $Z(G)\times |G|\cdot\ZZ$.
To this end, we need the lemma below.
Though this should be well-known, we provide a brief proof here,
for we could not find a reference.

\begin{lemma}
\label{lemma:aircle-action}
The center $Z(G)$ is the cyclic group generated 
by the homotopy class of a regular orbit
of a circle action on $M=S^3/G$.
\end{lemma}

\begin{proof}
If $M$ is a lens space, then $M$ has a genus-$1$ Heegaard splitting $V_1\cup V_2$,
and there is a circle action on $M$ such that a regular orbit forms a core circle
of $V_1$.
Since $G=\pi_1(M)$ is the cyclic group generated by the homotopy class 
of the regular orbit, the lemma obviously holds.
So we may assume $M$ is not a lens space.
Then $M$ admits a circle action such that the orbit space $\mathcal{O}$
is the $2$-dimensional spherical orbifold $S^2(p,q,r)$ 
where $(p,q,r)$ is $(2,2,r)$ with $r\ge 2$ or
$(2,3,r)$ with $r\in \{3,4,5\}$ (see \cite{Scott83}).
Let $z$ be an element of $G$ represented by a regular orbit of 
the circle action.
Then the subgroup $\langle z\rangle$ 
is contained in $Z(G)$ and the quotient $G/\langle z\rangle$ is identified with
the orbifold fundamental group $\pi_1^{\mathrm{orb}}(\mathcal{O})$.
The center of this group is trivial unless 
$\mathcal{O}=S^2(2,2,r)$ for some even integer $r\ge 2$.
Thus the assertion holds except for this case.

Let $\psi:S^3\to \SO(3)$ and
$\phi:S^3\times S^3 \to \SO(4)$ be the universal covering projection.
Let $\ZZ_m$ and $\Dh_r$, respectively, be the order-$m$ cyclic subgroup 
and the order-$2r$ dihedral subgroup of $\SO(3)$,
which are unique up to conjugation. 
Set $\tilde\ZZ_m=\psi^{-1}(\ZZ_m)$ and $\tilde\Dh_{r}=\psi^{-1}(\Dh_r)$.
Then, by \cite[Theorem 4.11]{Scott83}, we have 
$G=\phi(\tilde\ZZ_m\times \tilde\Dh_{r})$ 
after conjugation,
in the exceptional case where $\mathcal{O}=S^2(2,2,r)$ with $r\ge 2$ even.
Moreover, the subgroup $\langle z\rangle$ is 
identified with the subgroup $\phi(\tilde\ZZ_m\times \{1\})$.
It is easy to check that any element 
of $G$ that does not belong to
$\phi(\tilde\ZZ_m\times \{1\})$
is not central.
\end{proof}

By the above lemma, $Z(G)$ is generated by an element $z$ 
represented by a regular orbit 
of a circle action on $M$.
Consider the homotopy motion $\alpha_z$ of $M$ 
generated by the circle action, as in Subsection \ref{subsection:aspherical-case},
and consider the element $[\alpha_z]$ of  
$\pi_1(C (\Sigma, M), j )$ it represents.
Then, by the argument in Subsection \ref{subsection:aspherical-case},
we see $\Phi([\alpha_z])=z$ and $\deg([\alpha_z])=0$.
Thus we have shown that the image of 
$\Phi\times \deg$ contains $Z(G)\times\{0\}$.
 
Next, we show that the image of $\Phi\times \deg$ contains
$\{1\}\times |G|\cdot\ZZ$.
Pick a point $x$ in the interior of $\Sigma\times I$,
and modify the trivial motion $e:\Sigma\times I \to M$
on a regular neighborhood $N$ of $x$ as follows
(cf. \cite[p.353]{DavisKirk01}, \cite[(1.8)]{Olum50}).
Let $N_0$ be a smaller regular neighborhood of $x$.
Then, by pinching the $2$-sphere $\partial N_0$ into a point,
we obtain a continuous map $r$ from $N$ onto the wedge sum 
$N\vee S^3=N\cup_{\{x\}} S^3$,
such that 
(i) $r|_{\partial N}$ is the identity map,
(ii) $r|_{N-N_0}$ is a homeomorphism onto $N-\{ x \}$,
(iii) $r|_{N_0}$ induces a homeomorphism from $N_0/\partial N_0$ onto $S^3$.
Let $p:S^3\to M$ be the universal covering projection,
such that, when regarded as a map from the subspace $S^3$ of $N\vee S^3$,
it maps the point $x=N\cap S^3\subset S^3$ 
into the point $e(x)\in M$.
Then we obtain a continuous map $e|_N \vee p:N\vee S^3 \to M$. 
See Figure \ref{fig.alternation}. 
\begin{figure}[htbp]
\centering\includegraphics[width=12cm]{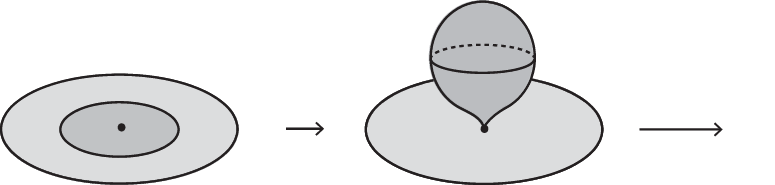}
\begin{picture}(400,0)(0,0)
\put(75,0){$N$}
\put(225,0){$N \vee S^3$}
\put(78,29){$x_0$}
\put(60,37){$N_0$}
\put(237,29){$x_0$}
\put(210,35){$N$}
\put(235,80){$S^3$}
\put(355,34){$M$}
\put(158,45){$r$}
\put(308,45){$e|_N \vee p$}
\end{picture} 
\caption{The pinching map $r$ from $N$ to $N \vee S^3$, and the map $e|_N \vee p$ from 
$N \vee S^3$ to $M$.}
\label{fig.alternation}
\end{figure}
Let $\beta:\Sigma\times I \to M$ the continuous map
obtained from $e$ by redefining $e$ on $N$ to be the composition $(e|_N \vee p)\circ r$.
Then $\beta$ determines an element $\pi_1(C (\Sigma, M), j )$
such that $\Phi([\beta])=1$ and $\deg([\beta])=|G|$.
Thus the image of $\Phi\times \deg$ contains
$\{1\}\times |G|\cdot\ZZ$.

On the other hand,
Proposition \ref{prop:obstruction3} implies, 
for each $z\in Z(G)$, that
the set $\deg([\alpha])=\deg(\hat\alpha)$,
where $[\alpha]$ runs over $\Phi^{-1}(z)$,
is of the form $d+|G|\cdot\ZZ$ for some $d\in\ZZ$,
because the homomorphisms 
$\hat\alpha_*:\pi_1(\Sigma\times S^1)\to \pi_1(M)$
induced by $[\alpha]\in \Phi^{-1}(z)$ are all equivalent.
In a preceding paragraph, we have observed that
$\deg([\alpha_z])=0$ for the element $[\alpha_z]\in \Phi^{-1}(z)$
(for the generator $z$ of $Z(G)$).
Thus we may assume $d=0$.
Hence the mage of $\Phi\times \deg$ is contained in $Z(G)\times |G|\cdot\ZZ$.

Thus we have shown that
the image of $\Phi\times \deg$ is contained in $Z(G)\times |G|\cdot\ZZ$
and that it contains both
$Z(G)\times\{0\}$ and $\{1\}\times |G|\cdot\ZZ$.
Hence the image of $\Phi\times \deg$ is equal to  $Z(G)\times |G|\cdot\ZZ$.
Since the injectivity of $\Phi\times \deg$ is already proved,
$\Phi\times \deg$ induces an isomorphism 
from $\mathcal{K} (M, \Sigma) \cong \pi_1(C (\Sigma, M), j)$
onto $Z(G)\times |G|\cdot\ZZ$,
completing the proof of
of Theorem \ref{thm:non-zero degree maps 1}(2) when $g(\Sigma)\ge 2$.

Suppose that $g(\Sigma) \leq 1$. 
Then $\mathcal{K} (M, \Sigma) \cong \pi_1 ( C (\Sigma , M) , j) / \mathscr{I} ( \pi_1 ( J (\Sigma, M ) , j) )$ by Lemma \ref{lem:monodromy group and kernel group}.
Note that the preceding argument shows that
$\Phi\times \widetilde{\deg}$ induces an isomorphism 
$\pi_1(C (\Sigma, M), j) \cong Z(G)\times |G|\cdot\ZZ$,
where $\widetilde{\deg}$ is the composition of
$\pi_1(C (\Sigma, M), j)\to \mathcal{K} (M, \Sigma)$ and 
$\deg:\mathcal{K} (M, \Sigma) \to \ZZ$.
When $g(\Sigma) = 1$, by 
the argument in the proof of
Lemma \ref{lem:Haken1}, 
we see that the image of the subgroup
$\mathscr{I} ( \pi_1 ( J (\Sigma, M ) , j) )$
by the isomorphism is equal to $Z(G)$. 
When $g(\Sigma) = 0$, $\Sigma$ is the genus-$0$ Heegaard splitting of $M = S^3$, 
thus, $Z(G) = G = 1$. 
Hence, in any case, we see that
$\deg$ induces an isomorphism $\mathcal{K} (M, \Sigma) \to |G|\cdot\ZZ$
by Lemma \ref{lem:monodromy group and kernel group},
completing the proof of
Theorem \ref{thm:non-zero degree maps 1}(2). 

\subsection{Proof of Theorem \ref{thm:non-zero degree maps 2}}
\label{subsection:general-case}

Case 1. $M$ contains a prime aspherical summand.
Then $M$ is a connected sum $M_1\# M_2$ of an aspherical prime manifold $M_1$
and another $3$-manifold $M_2$, which is possibly $S^3$.
Suppose on the contrary that there is a Heegaard surface $\Sigma$ of $M$,
such that $(M,\Sigma)$ admits a $\Sigma$-domination
$\phi:(\Sigma\times S^1, \Sigma\times \{0\}) \to (M,\Sigma)$.
By Haken's theorem on Heegaard surfaces of composite manifolds
(see \cite[Theorem II.7]{Jaco80}),
$(M,\Sigma)$ is a pairwise connected sum $(M_1,\Sigma_1)\#(M_2,\Sigma_2)$
where $\Sigma_i$ is a Heegaard surface of $M_i$ ($i=1,2$).
By pinching $(M_2,\Sigma_2)$ into a point, 
we obtain from $\phi$ a $\Sigma_1$-domination of $(M_1,\Sigma_1)$.
This contradicts Theorem \ref{thm:non-zero degree maps 1}(1). 
Hence we obtain Theorem \ref{thm:non-zero degree maps 2}(1).

\medskip

In order to treat the remaining cases,
recall that a {\it stabilization} of a Heegaard splitting $M = V_1 \cup_\Sigma V_2$ 
(or a Heegaard surface $\Sigma \subset M$) is an operation 
to obtain a Heegaard splitting $M = V_1' \cup_{\Sigma'} V_2'$ 
(or a Heegaard surface $\Sigma' \subset M$)
of higher genus by adding $V_1$ 
a trivial $1$-handle, that is, a $1$-handle whose core is parallel to $\Sigma$ in $V_2$, and 
removing that from $V_2$. 

\begin{lemma}
\label{lem:stabilization}
Let $M$ be a closed, orientable $3$-manifold,  
$\Sigma$ a Heegaard surface for $M$, and 
$\Sigma'$ a Heegaard surface obtained by a stabilization 
from $\Sigma$. 
If there exists a degree-$d$ $\Sigma$-domination of
$(M, \Sigma)$, 
then there exists a degree-$d$ $\Sigma'$-domination of 
$(M, \Sigma')$ as well. 
\end{lemma}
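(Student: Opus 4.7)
The plan is to construct $\phi'$ from $\phi$ by first precomposing with a degree-$1$ pinch map $p : \Sigma' \to \Sigma$ and then modifying the result by a homotopy in a small collar of $\Sigma' \times \{0\}$ so that its restriction to $\Sigma' \times \{0\}$ becomes the inclusion of $\Sigma'$ into $M$.

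First, I would fix a ball $B \subset M$ in which the stabilization is realized, so that $B \cap \Sigma = D$ is a disk, $B \cap \Sigma' = T$ is a once-punctured torus with $\partial T = \partial D$, and $\Sigma \setminus \mathrm{int}(D) = \Sigma' \setminus \mathrm{int}(T) =: \Sigma_0$. (Such a $B$ exists because the trivial $1$-handle defining the stabilization can be taken parallel to $\Sigma$, and a regular neighborhood of the handle together with the parallel arc on $\Sigma$ is a ball.) Define $p : \Sigma' \to \Sigma$ to be the identity on $\Sigma_0$ and on $T$ to be any continuous map onto $D$ that restricts to the identity on $\partial T = \partial D$. A regular-value count at a point of $\Sigma_0$ shows that $\deg(p) = 1$, so the composition
\[
\phi_1 := \phi \circ (p \times \id_{S^1}) : \Sigma' \times S^1 \to M
\]
has degree $d$.

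The restriction of $\phi_1$ to $\Sigma' \times \{0\}$ equals $j \circ p$, where $j : \Sigma \hookrightarrow M$ denotes the inclusion; this agrees with the inclusion $j' : \Sigma' \hookrightarrow M$ on $\Sigma_0$, but not on $T$. However, both $j \circ p|_T$ and $j'|_T$ land in the ball $B$ and coincide on $\partial T$, so the contractibility of $B$ yields a homotopy $\Psi : \Sigma' \times I \to M$ from $j \circ p$ to $j'$ that is stationary on $\Sigma_0$ and whose entire image lies in $\Sigma_0 \cup B$. I would then use $\Psi$ to modify $\phi_1$ inside a thin collar $\Sigma' \times [-\epsilon, \epsilon]$ of $\Sigma' \times \{0\}$, leaving $\phi_1$ untouched outside. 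On the outer portion of the collar, a short reparametrization of $\phi$ transitions from $\phi_1|_{\Sigma' \times \{\pm\epsilon\}} = \phi(p(\cdot), \pm \epsilon)$ down to $j \circ p$ at the midpoint, and on the inner portion, a reparametrized $\Psi$ transitions from $j \circ p$ to $j'$, so that at $s = 0$ the new map $\phi'$ restricts to $j'$, an embedding with image $\Sigma'$.

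For the degree, observe that for small $\epsilon$ the image of $\phi'|_{\Sigma' \times [-\epsilon, \epsilon]}$ lies in an arbitrarily small neighborhood $U$ of $\Sigma \cup T$, since $\Psi$ takes values in $\Sigma_0 \cup B$ and $\phi|_{\Sigma \times [-\epsilon, \epsilon]}$ is close to the inclusion. Choosing a regular value $y \in M \setminus U$, we have $(\phi')^{-1}(y) = \phi_1^{-1}(y)$ with matching signs, so $\deg(\phi') = \deg(\phi_1) = d$. The key topological point throughout is the contractibility of the ball $B$, which trivializes the relevant obstructions; no essential obstacle is expected, the only real care needed being in the continuous gluing of the collar modification, which is routine.
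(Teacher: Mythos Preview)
Your proposal is correct and follows essentially the same approach as the paper's proof. Both arguments construct a degree-$1$ pinch map $p:\Sigma'\to\Sigma$ supported in the ball $B$ (the paper's $f_1$ plays exactly this role), compose with $\phi$, and then use a homotopy supported in $B$ (your $\Psi$, the paper's $F$) to correct the restriction on $\Sigma'\times\{0\}$; the only difference is how $S^1$ is reparametrized---the paper puts the $\Psi$-pieces in the outer thirds $[0,1/3]\cup[2/3,1]$ while you insert them in a thin collar $[-\epsilon,\epsilon]$. One small imprecision: the image of your collar modification lies in a neighborhood of $\Sigma\cup B$ rather than $\Sigma\cup T$, but since $B$ is a ball this is still a proper closed subset of $M$ and your regular-value degree count goes through unchanged.
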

\begin{proof}
Suppose that there exists a degree-$d$ $\Sigma$-domination 
$\phi : (\Sigma \times S^1, \Sigma \times \{ 0 \}) \to (M, \Sigma)$. 
Without loss of generality we can assume that $\phi (x , 0) = x$ for any $x \in \Sigma$. 
Further, we can assume that the stabilization is performed in a $3$-ball $B$ in $M$ that intersects $\Sigma$ in a disk, 
thus, $\Sigma - B = \Sigma' - B$. 
Then there exists a homotopy $F = \{ f_t \}_{t \in I} : \Sigma' \times I \to M$ such that 
\begin{enumerate}
\item
$f_0(x) = x$, for $x \in \Sigma'$; 
\item
$f_t(x) = x$ for $x \in \Sigma' - B$, $t \in I$;  
\item
$f_t (x) \in B$ for $x \in B \cap \Sigma$, $t \in I$; and 
\item
$f_1 (\Sigma') = \Sigma$. 
\end{enumerate}
Using this homotopy, we can define a $\Sigma'$-domination
$\phi' : ( \Sigma' \times S^1 , \Sigma' \times \{ 0 \}) \to (M, \Sigma')$ by 
\[
\phi' (x, \theta) = 
\left\{ 
\begin{array}{ll} 
f_{3\theta} (x) &  (0 \leq \theta \leq 1/3) \\
\phi (f_1(x), 3\theta - 1) &  (1/3 \leq \theta \leq 2/3) \\
f_{3 - 3\theta} (x) &  ( 2/3 \leq \theta \leq 1) . 
\end{array}
\right.
\]
Since the homotopy $F$ moves $\Sigma'$ only inside the local 3-ball $B$, 
the degree of $\phi'$ is $d$. 
\end{proof}

Case 2. $M=\#_g (S^2 \times S^1)$ for some $g\ge 1$.
By Waldhausen \cite{Waldhausen68b}, Bonahon-Otal \cite{BonahonOtal83} and Haken \cite{Haken68}, 
any Heegaard splitting of $M$ is a stabilization of 
the unique genus-$g$ Heegaard splitting $M=V_1 \cup_\Sigma V_2$.
Therefore, by Lemma \ref{lem:stabilization}, we may assume 
$\Sigma$ is the unique genus-$g$ 
Heegaard surface.
Then by Lemma \ref{lemma:trivial mondromy openbook1},
there exists a degree-$1$ $\Sigma$-domination of $(M, \Sigma)$. 
Since $\deg : \mathcal{K}(M,\Sigma)\to\ZZ$
is a homomorphism,
we have $\deg(\mathcal{K}(M,\Sigma))=\ZZ$,
completing the proof of Theorem \ref{thm:non-zero degree maps 2}(2).

\begin{remark}
{\rm
It is proved in \cite[Proposition 4]{KotschickNeofytidis13}
that there exists a double branched covering map 
from $\Sigma\times S^1$ to $\#_g (S^2 \times S^1)$ where $\Sigma$
is a closed, orientable surface of genus $g$.
(See \cite[Lemma 2.3] {Neofytidis18} for a related interesting result.)
That map actually gives a domination of 
the minimal genus Heegaard surface of $\#_g (S^2 \times S^1)$
by $\Sigma\times S^1$.
However, this does not imply the full statement of 
Theorem \ref{thm:non-zero degree maps 2}(2),
because the map has degree $2$ and it gives domination 
of only the minimal genus Heegaard surface. 
}
\end{remark}

Case 3. $M = \RP^3 \# \RP^3$.
By Montesinos-Safont \cite{MontesinosSafont88} and Haken \cite{Haken68}, 
any Heegaard splitting of $M$ is a stabilization of the unique genus-$2$ 
Heegaard splitting $M = V_1 \cup_\Sigma V_2$. 
Therefore, by Lemma \ref{lem:stabilization}, we may assume 
$\Sigma$ is the unique genus-$2$ 
Heegaard surface.

We first show that 
there exists a degree-$d$ 
$\Sigma$-domination of $(M, \Sigma)$ 
for any even integer $d$. 
For this, it is enough to find a degree-$2$ 
$\Sigma$-domination of $(M, \Sigma)$. 

Let $\tau$ be the antipodal map of $S^2$,  and $\eta$ the involution of $S^1$ defined by 
$\eta(\theta)= -\theta$.
Identify $M = \RP^3 \# \RP^3$ with $(S^2\times S^1)/(\tau\times\eta)$,
and let $p:S^2\times S^1\to M$ be the covering projection.
Thus we can regard $M$ as an $S^2$-bundle over the orbifold
$S^1/\eta$ with underlying space $[0,1/2]$.
Choose disjoint disks $D_-$ and $D_+$ in $S^2$ with $\tau (D_-) = D_+$. 
Let $R = I \times I$ be a rectangle in $\cl(S^2-(D_+\cup D_-))$ such that 
$R \cap D_- = \{ 0 \} \times I$ and $R \cap D_+ = \{ 1 \} \times I$. 
Then $\tilde V_1:=( (D_- \cup D_+) \times S^1 ) \cup (R \times [1/6, 2/6]) \cup ( \tau(R) \times [4/6,5/6])$
is a $(\tau\times\eta)$-invariant handlebody of genus $3$,
and its exterior 
$\tilde V_2:=\cl(M-\tilde V_1)$
is also a $(\tau\times\eta)$-invariant handlebody of genus $3$.
Thus the pair $(\tilde V_1, \tilde V_2)$ determines a $(\tau\times\eta)$-invariant
Heegaard splitting of $S^2\times S^1$, and
it projects to the genus-$2$ Heegaard splitting $(V_1,V_2)$ of $M$,
where $V_i:=p(\tilde V_i)$ ($i=1,2$).
See Figure \ref{fig.RP3RP3}. 
\begin{figure}[htbp]
\centering\includegraphics[width=12cm]{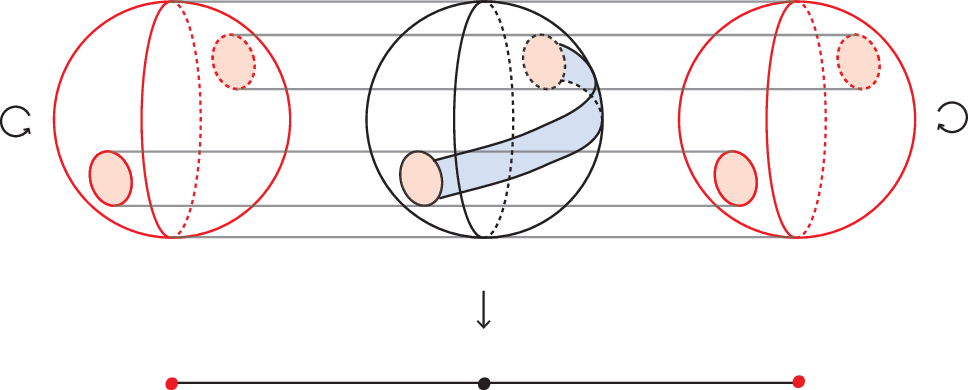}
\begin{picture}(400,0)(0,0)
\put(104,126){\scriptsize \color{red} $D_+$}
\put(213,126){\scriptsize \color{red} $D_+$}
\put(324,126){\scriptsize \color{red} $D_+$}
\put(61,85){\scriptsize \color{red} $D_-$}
\put(170,85){\scriptsize \color{red} $D_-$}
\put(281,85){\scriptsize \color{red} $D_-$}
\put(225,100){\small \color{blue} $R$}
\put(85,0){$0$}
\put(191,0){$1/4$}
\put(308,0){$1/2$}
\put(18,106){$\tau$}
\put(375,106){$\tau$}
\end{picture} 
\caption{The manifold $\RP^3 \# \RP^3$ as an $S^2$-bundle over 
the orbifold $S^1/\eta$.}
\label{fig.RP3RP3}
\end{figure}
We are going to construct a domination of the Heegaard surface $\Sigma:=V_1\cap V_2$,
by constructing an equivariant domination of the Heegaard surface 
$\tilde\Sigma= p^{-1}(\Sigma)=\tilde V_1\cap \tilde V_2$.

To this end, consider an annulus 
$A:=\delta \times S^1\subset \tilde\Sigma$,
where $\delta$ is an arc in $\partial D_{-}$ disjoint from the rectangle $R$.
Then there is an open book decomposition $(L,\pi)$ 
of $S^2\times S^1$
with $L=\partial A$,
such that $A$ is the page $\pi^{-1}(0)\cup L$.
Let $\{r_t\}_{t\in\RR}$ be the 
book rotation with respect to $(L,\pi)$.
Now consider the conjugate of the above open book decomposition 
by the covering involution $\tau\times\eta$,
and let $\{r'_t\}_{t\in\RR}$ be the associated book rotation 
obtained from $\{r_t\}_{t\in\RR}$
through conjugation by $\tau\times\eta$.
Observe that the page $A':=(\tau\times\eta)(A)$ is disjoint from $A$.
Since the monodromy of the open book decomposition $(L,\pi)$ is the identity,
we can construct a $\ZZ/2\ZZ$-equivariant map
$\tilde\phi:\tilde\Sigma\times S^1\to S^2\times S^1$
 by 
\[ 
\tilde\phi(x,t)=
\begin{cases}
r_t(x) & (x\in A) \\
r'_t(x) & (x\in A') \\
x & (x\in \tilde\Sigma -(A\cup A') ) .
\end{cases}
\]
This map naturally induces a $\Sigma$-domination 
$\phi: (\Sigma \times S^1, \Sigma \times \{ 0 \}) \to (M, \Sigma)$ 
whose restriction to $\Sigma \times \{ 0 \}$ is 
a homeomorphism onto the Heegaard surface $\Sigma$,
that is actually the identity map under a natural identification of 
the two surfaces.
Since the degree of each of  $\tilde\phi$, $p$ and the map $\tilde\Sigma\times S^1\to \Sigma\times S^1$
is $2$, the degree of $\phi$ is $2$. 

To show the other direction, suppose that 
$\phi$ is a degree-$d$ map from 
$\Sigma \times S^1$ to $\RP^3 \# \RP^3$. 
Here we do not need to require that $\phi (\Sigma \times \{ 0 \})$ is a Heegaard surface of 
$\RP^3 \# \RP^3$. 
Let $p: \RP^3 \# \RP^3 \to \RP^3$ be a degree-$1$ map defined by pinching one summand $\RP^3$ to 
a 3-ball in the other summand $\RP^3$. 
Then the composition $p \circ \phi$ is a degree-$d$ map from $\Sigma \times S^1$ to $\RP^3$. 
From Hayat-Legrand-Wang-Zieschang \cite[Theorem 2]{Hayat-LegrandWangZieschang02} 
it follows that $d$ should be an even number. 
This completes the proof of Theorem \ref{thm:non-zero degree maps 2}(3).

\section{Gap between $\Mdy(M,\Sigma)$ and the subgroup $\langle \Mdy (V_1), \Mdy (V_2) \rangle$}
\label{sec:Gap between Mdy(M,Sigma) and the subgroup Mdy (V_1), Mdy (V_2)}

In this section, we show the following theorem, which gives a partial answer to 
Question \ref{question:motivation}(2).
 
\begin{theorem}
\label{thm:well-definedness of homological degree for aspherical manifolds}
Let $M = V_1 \cup_{\Sigma} V_2$ be a Heegaard splitting of 
a closed, orientable $3$-manifold $M$ induced from an open book decomposition. 
If $M$ has an aspherical prime summmand, 
then we have $\langle \Mdy (V_1), \Mdy (V_2) \rangle  \lneq \Mdy (M, \Sigma)$. 
\end{theorem}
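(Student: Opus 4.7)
The strategy is to construct a $\ZZ^2$-valued homomorphism $\Deg: \Mdy(M, \Sigma) \to \ZZ^2$ whose reduction modulo $2$ vanishes on the subgroup $\langle \Mdy(V_1), \Mdy(V_2) \rangle$ but is non-trivial on, say, the terminal end $\partial_+(\rho)$ of the half book rotation from Section \ref{sec:Open book rotations}; this will immediately produce the strict inclusion. The invariant $\Deg$ will descend from a homomorphism $\widehat\Deg: \Pi(M, \Sigma) \to \ZZ^2$ that I will first build on the full homotopy motion group.

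To construct $\widehat\Deg$, I identify $M$ with $(\Sigma_0 \times \RR)/\sim$ via the given open book decomposition $(\Sigma_0, \varphi)$, so that $\Sigma = \Sigma_0 \cup \Sigma_{1/2}$ is the union of the two pages at heights $0$ and $1/2$, and $\pi: M - L \to S^1 = \RR/\ZZ$ is the natural fibration. Given a homotopy motion $\alpha = \{f_t\}_{t \in I}$, I lift the paths $t \mapsto f_t(x)$, for $x \in \Sigma - L$, to the infinite cyclic cover $\Sigma_0 \times \RR$ of $M - L$ (after a small generic perturbation of $\alpha$ so that every such path avoids $L$), and record the net $\RR$-translation on each page; multiplying by $2$ so that translations that swap the two pages count as odd integers yields a pair $(n_0(\alpha), n_{1/2}(\alpha)) \in \ZZ^2$, and I set $\widehat\Deg(\alpha) := (n_0(\alpha), n_{1/2}(\alpha))$. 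Standard covering-space arguments give well-definedness, independence of the perturbation, and the homomorphism property. The descent of $\widehat\Deg$ to a homomorphism $\Deg$ on $\Mdy(M, \Sigma)$ via the exact sequence \eqref{exact sequence for homotopy motion group} amounts, by Lemma \ref{lem:degree-KernelGroup}, to the vanishing of $\widehat\Deg$ on $\mathcal{K}(M, \Sigma)$. For $\alpha \in \mathcal{K}(M, \Sigma)$ the continuity of $\hat\alpha: \Sigma \times S^1 \to M$ along the binding $L$ couples the two entries $n_0(\alpha), n_{1/2}(\alpha)$ so that both vanish whenever the single integer $\deg(\alpha)$ does; invoking the hypothesis that $M$ has an aspherical prime summand together with Theorem \ref{thm:non-zero degree maps for geometric or prime manifolds}(1), we obtain $\deg \equiv 0$ on $\mathcal{K}(M, \Sigma)$, hence the descent.

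It remains to check the two endpoints. For the upper bound, any generator $\gamma$ of $\langle \Mdy(V_1), \Mdy(V_2) \rangle$ is realized by a homotopy motion $\alpha^\gamma$ whose image lies entirely in $V_i = \pi^{-1}(I_i) \cup L$, where $I_1 = [0, 1/2]$ and $I_2 = [1/2, 1]$; this confines each lifted page to an $\RR$-slab of width $1/2$, so after the doubling convention each component of $\Deg(\gamma)$ is an even integer, and by the homomorphism property this persists on the whole subgroup. For the lower bound, the half book rotation $\rho$ literally translates each of $\Sigma_0, \Sigma_{1/2}$ by $1/2$ in the $\RR$-direction of the cover, giving $\Deg(\partial_+(\rho)) = (1,1) \notin (2\ZZ)^2$, so $\partial_+(\rho) \in \Mdy(M, \Sigma) \setminus \langle \Mdy(V_1), \Mdy(V_2) \rangle$. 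I expect the principal obstacle to be the descent step: carefully propagating the vanishing of the scalar $\deg(\alpha)$, which is what Theorem \ref{thm:non-zero degree maps for geometric or prime manifolds}(1) directly delivers, to the vanishing of the $\ZZ^2$-valued $\widehat\Deg(\alpha)$ --- this is where the interplay between continuity at the binding $L$ and the counting of sheets in the infinite cyclic cover must be pinned down precisely.
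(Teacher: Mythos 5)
Your overall strategy is exactly the paper's: build a $\ZZ^2$-valued ``bidegree'' on $\Pi(M,\Sigma)$, show it descends to $\Mdy(M,\Sigma)$ because $\deg$ vanishes on $\mathcal{K}(M,\Sigma)$ by Theorem \ref{thm:non-zero degree maps for geometric or prime manifolds}(1), show its mod $2$ reduction vanishes on $\langle \Mdy(V_1),\Mdy(V_2)\rangle$, and evaluate it on $\partial_+(\rho)$ or $\partial_+(\sigma)$. However, your concrete construction of $\widehat{\Deg}$ does not work as stated. You cannot perturb a homotopy motion $\alpha:\Sigma\times I\to M$ so that \emph{every} track $t\mapsto f_t(x)$ avoids the binding $L$: the domain is $3$-dimensional and $L$ is $1$-dimensional in a $3$-manifold, so $\alpha^{-1}(L)$ is generically $1$-dimensional and nonempty, and its projection to $\Sigma$ disconnects $\Sigma-L$ into regions across which the ``net $\RR$-translation'' of the lifted track jumps. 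Hence $(n_0(\alpha),n_{1/2}(\alpha))$ is not well defined pointwise; it depends on which component of the complement of that wall your reference point lies in. The correct (and essentially equivalent in spirit) definition is homological: set $\widehat{\Deg}(\alpha)=(d_1,d_2)$ where $\alpha_*([\Sigma\times I])=d_1[V_1]+d_2[V_2]$ under $H_3(M,\Sigma)\cong H_3(V_1,\partial V_1)\oplus H_3(V_2,\partial V_2)$. This averages out the jumps you would see pointwise, makes the confinement argument for $\Mdy(V_i)$ immediate ($d_2=0$ or $d_1=0$ since the image lies in one handlebody), and makes the coupling you worry about in the descent step trivial: for $\alpha\in\mathcal{K}(M,\Sigma)$ one gets $\widehat{\Deg}(\alpha)=(\deg\alpha,\deg\alpha)$ directly because $\hat\alpha_*[\Sigma\times S^1]=\deg(\alpha)\,[M]$ and $[M]$ decomposes as $\pm([V_2]-[V_1])$. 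So the step you flag as ``the principal obstacle'' is genuinely where your approach is incomplete, and the homological reformulation is what closes it.

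A second, smaller error: $\widehat{\Deg}$ is \emph{not} a homomorphism to $\ZZ^2$. Because concatenation precomposes the second motion with the terminal end of the first, one gets the crossed identity $\widehat{\Deg}(\alpha\cdot\beta)=\widehat{\Deg}(\alpha)+\deg(\partial_+\alpha)\cdot\widehat{\Deg}(\beta)$, i.e.\ a homomorphism to $\ZZ^2\rtimes C_2$ where $C_2$ records the orientation behavior of the terminal end (orientation-reversing elements such as $I$-bundle involutions and $\rho(1)$ occur unavoidably here). Your mod $2$ conclusion survives, since the twist acts by $\pm1$, and you also need the companion identity $d_1+d_2=-1+\deg(\partial_+\alpha)$ to pin down the actual values (e.g.\ $\widehat{\Deg}=(-1+\deg f,0)$ for motions in $V_1$, and $(\mp1,\mp1)$, $(\mp1,\pm1)$ for $\rho$, $\sigma$ up to sign conventions); but as written the claimed ``homomorphism property'' is false and should be replaced by the crossed version before you invoke it to propagate evenness over products of generators.
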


In fact, we will see that neither $\rho(1)$ nor 
$\sigma(1)$, defined in Section \ref{sec:Open book rotations}, is not contained in 
$\langle \Mdy (V_1), \Mdy (V_2) \rangle$  under the assumption of Theorem 
\ref{thm:well-definedness of homological degree for aspherical manifolds}. 
To show this, we will define a $\ZZ^2$-valued invariant  $\Deg(f)$
for elements $f$ of 
$\Mdy (M, \Sigma)$. 
To this end, we first define a $\ZZ^2$-valued invariant  $\widehat{\Deg}(\alpha)$
for elements $\alpha$ of the homotopy motion group $\Pi (M, \Sigma)$, and
study its basic properties. We then
show, by using Theorem \ref{thm:non-zero degree maps 2}(1) 
that it descends to an invariant  
for elements of $\Mdy (M, \Sigma)$
when $M$ satisfies the assumption of 
Theorem \ref{thm:well-definedness of homological degree for aspherical manifolds}.

\begin{remark}
{\rm
Let $\Sigma$ be a Heegaard surface of a closed, orientable 3-manifold $M$. 
The existence of a gap between $\langle \Gamma (V_1), \Gamma (V_2) \rangle $ and 
$\Gamma (M, \Sigma)$ given in the above theorem
implies, in particular, that 
the Seifert-van Kampen-like theorem for the homotopy motion group 
$\Pi (M, \Sigma)$ is no longer valid as in the following meaning,
though $\Pi (M, \Sigma)$ could be regarded as a generalization of the fundamental group
(cf. Remark \ref{rem:homotopy motion group and the fundamental group} and 
Theorem \ref{thm:homotopy motion group for a local surface}).
Consider the homotopy motion groups 
$\Pi (V_1, \Sigma)$ and $\Pi (V_2, \Sigma)$.
Recall that the group $\Gamma (V_i)$ ($i=1,2$) is the image of the natural map 
$\partial_+ : \Pi (V_i , \Sigma) \to \MCG (\Sigma)$. 
Since a homotopy motion of $\Sigma$ in $V_i$ is that of $\Sigma$ in $M$ as well, 
we have a canonical map 
$I_i: \Pi (V_i , \Sigma) \to \Pi (M , \Sigma)$.  
Since the manifold $M$ is obtained by gluing $V_1$ and $V_2$ along $\Sigma$, 
one might expect that 
\[ \langle I_1 (\Pi (V_1, \Sigma)) , I_2 ( \Pi (V_2, \Sigma) ) \rangle = \Pi (M, \Sigma),\] 
that is, $\Pi (M , \Sigma)$ is generated by 
elements of $I_1 (\Pi (V_1 , \Sigma))$ and $I_2 (\Pi (V_2 , \Sigma))$ 
as we see in the 
Seifert-van Kampen theorem. 
The incoincidence $\langle \Gamma (V_1), \Gamma (V_2) \rangle \lneq \Gamma (M, \Sigma)$, however, implies that 
this is not true because 
\[  \partial_+ ( \langle I_1 ( \Pi_1 (V_1, \Sigma)) , I_2 ( \Pi_1 (V_2, \Sigma) ) \rangle ) = \langle \Mdy (V_1), \Mdy (V_2) \rangle \]
while 
\[ \partial_+ (\Pi (M, \Sigma)) = \langle \Mdy (M, \Sigma) \rangle,\] 
and they are different.}
\end{remark}

Let $M = V_1 \cup_{\Sigma} V_2$ be a Heegaard splitting of 
a closed, orientable $3$-manifold $M$. 
We will adopt the following convention. 
Given an orientation of $M$, or equivalently, a fundamental class $[M] \in H_3 (M)$, 
we always choose the fundamental classes 
$[V_i] \in H_3 (V_i, \partial V_i)$ ($i=1,2$) and 
$[\Sigma] \in H_2 (\Sigma)$ so as to satisfy the following.

\begin{convention}
\label{conv:orientation}
{\rm
$[M] = [V_2] - [V_1]$ and 
$[\Sigma] = [\partial V_1]= [\partial V_2]$, where $[\partial V_i]$ 
is the one induced from $[V_i]$.
} 
\end{convention}
\noindent

By $[I] \in H_1 (I; \partial I)$ we always mean the fundamental class corresponding 
to the canonical orientation of $I$.  

We define a map $\widehat{\Deg} : \Pi (M, \Sigma) \to \ZZ^2$ as follows. 
First, we fix an orientation of $M$. 
Let 
$\alpha:\Sigma\times I \to M$
be a homotopy motion. 
Consider the homomorphism
\[
\alpha_*:H_3 (\Sigma \times I, \Sigma \times \partial I) \to H_3 (M, \Sigma)
\cong H_3 (V_1, \partial V_1)\oplus H_3 (V_2, \partial V_2),
\]
and let $(d_1,d_2)$ be the pair of integers such that
$\alpha_*([\Sigma \times I])=d_1 [V_1] + d_2 [V_2]$,
where $[\Sigma \times I]$ is the cross product of $[\Sigma]$ and $[I]$. 
This pair is uniquely determined by the equivalence class of $\alpha$.
We then define $\widehat{\Deg} ( \alpha )= (d_1,d_2)$.
We note that this invariant does not depend on the orientation of $M$ 
under the above convention. 
The following examples can be easily checked.

\begin{example}
\label{example:degree of Dehn twist}
{\rm
Let $f$ be a Dehn twist about a meridian of $V_1$. This 
is an element of $\Mdy^+ (V_1)$.
In fact, we can construct a homotopy motion $\alpha$ of
$\Sigma=\partial V_1$ in $V_1$ with terminal end $f$ as follows.
As in Subsection \ref{subsection:aspherical-case},
let $Y_1$ be a spine of $V_1$
and let $\{r_t\}_{t\in I}$ be a strong deformation retraction of $V_1$ onto $Y_1$. 
Define a homotopy motion $\alpha$ of $\Sigma$ in $M=V_1\cup V_2$ by
\[ \alpha (t) (x) =
\begin{cases}
r_{2t}(x) & (0\le t\le 1/2)\\
r_{2-2t}(f(x)) & (1/2\le t\le 1).
\end{cases}
\]
Then we have $\widehat{\Deg}(\alpha)=(0,0)$.
}
\end{example}

\begin{example}
\label{example:degree of I-bundle involution}
{\rm
Let $h$ be a vertical $I$-bundle involution on $V_1$, 
which is an element of $\Mdy^- (V_1)$. 
Then the linear homotopy joining each $x\in\Sigma$ with $h(x)$ in the fiber $I_x$ containing
both $x$ and $h(x)$ determines a homotopy motion 
$\alpha$ of $\Sigma=\partial V_1$ in $V_1$ with terminal end $h|_{\Sigma}$.
Regarding $\alpha$ as a homotopy motion of $\Sigma$ in $M=V_1\cup V_2$, 
we have $\widehat{\Deg}(\alpha)=(-2,0)$
as shown below.
Note that if $x\in \Sigma$ projects to an interior point 
of the base surface of the $I$-bundle structure
then the inverse image of the fiber $I_x$ by $\alpha$
is the disjoint union $(\{x\}\times I)\sqcup (\{h(x)\}\times I)
\subset \Sigma\times I$.
Moreover, for a small regular neighbourhood $B$ of a point
in the interior of $I_x$,
the inverse image $\alpha^{-1}(B)$ is the disjoint union of two $3$-balls
$B_1$ and $B_2$ such that the restriction of $\alpha|_{B_i}:B_i\to B$
is a homeomorphism for $i=1,2$.
By Convention \ref{conv:orientation},
$\alpha|_{B_i}$ is orientation-reversing with respect to the orientations
induced from the fundamental classes $[\Sigma\times I]$ and $[V_1]$.
Since the degree is equal to the sum of local degrees 
(see for example \cite[Section 3.3, Exercise 8]{Hatcher02}),
we have  $\alpha_*([\Sigma \times I])=-2[V_1]$
for the map $\alpha:\Sigma\times I \to V_1$.
This implies $\widehat{\Deg}(\alpha)=(-2,0)$, 
because the image of the map $\alpha:\Sigma\times I \to M$ 
does not contain $V_2$.
Similarly, if $h$ is a vertical $I$-bundle involution on $V_2$,
then we have $\widehat{\Deg}(\alpha)=(0, -2)$
for the corresponding homotopy motion $\alpha$ of $\Sigma$ in $M$.
}
\end{example}

\begin{example}
\label{example:Deg and deg}
{\rm
Recall the homomorphism $\deg : \mathcal{K}(M,\Sigma)\to \ZZ$
introduced in 
Definition \ref{def:degree-KernelGroup}.  
For each $\alpha\in\mathcal{K}(M,\Sigma)$,
we have 
\[
\widehat{\Deg}(\alpha) = 
(-\deg (\alpha), \deg (\alpha)). 
\]
}
\end{example}

\begin{example}
\label{example:degree of book rotation}
{\rm
Suppose that $M=V_1\cup_{\Sigma} V_2$ is the Heegaard spitting  
induced from an open book decomposition, 
and let $\rho$ and $\sigma$, respectively, be the half rotation and the unilateral rotation of $\Sigma$
associated with the open book decomposition. 
Then we check that
\[
\widehat{\Deg}(\rho)=(-1,-1), \quad \widehat{\Deg}(\sigma)=(-1,1)
\] 
by an argument similar to that in Example \ref{example:degree of I-bundle involution}.
However, since the orientation convention is slightly involved,
we give a detailed explanation.
As explained in Section \ref{sec:Open book rotations}, 
we  have the natural identification $( M, L ) = (F \times \RR, \partial F \times \RR ) / \sim$,
where $L$ is the binding of the open book decomposition.
We equip $\RR$ with the standard orientation, and then we orient $F$ so that the orientation of $M$ 
is compatible with that of $F \times \RR$, where 
we note that each chain in $C_* (F \times I)$ naturally projects to that in $C_*(M) = C_*( (F \times I) / \sim)$. 
By abuse of notation, we shall not distinguish notationally between an oriented manifold and 
an $n$-chain representing it. 
Under this identification, we can write the chains $V_1$, $V_2$, $\Sigma$ and $M$ 
according to Convention \ref{conv:orientation} as follows: 
\begin{itemize}
\item
$V_1 = - (F \times [ 0 , 1/2 ]) / \sim$; 
\item 
$V_2 = (F \times [ 1/2 , 1 ]) / \sim$; and 
\item
$\Sigma = (F_0 - F_{1/2}) / \sim = \partial V_1 = \partial V_2$, where $F_t := F \times \{t\}$. 
\item
$M = (F \times I)/ \sim = - V_1 + V_2$.
\end{itemize}

Now, consider the chain map $\rho_\# : C_3 (\Sigma \times I) \to C_3 (M) $ induced by the half-rotation $\rho$. 
By the definition of $\rho$, we have 
\begin{align*}
\rho_\# (\Sigma \times I) &= \rho_\# ( ((F_0 - F_{1/2}) / \sim) \times I ) = \rho_\# ( (F_0 / \sim) \times I - (F_{1/2} / \sim) \times I ) \\
&= ((F \times [0,1/2]) / \sim) - (( F \times [1/2, 1])/ \sim) = -V_1 - V_2. 
\end{align*}
Hence $\widehat{\Deg} ( \rho ) = (-1, -1)$. 

Similarly, for the unilateral rotation $\sigma$, we have 
\begin{align*}
\sigma_\# (\Sigma \times I) &= \sigma_\# ( (F_0 / \sim )  \times I - (F_{1/2} / \sim) \times I ) \\
&= (( F \times I)/ \sim) - 0 = M = - V_1 + V_2. 
\end{align*}
Hence $\widehat{\Deg} ( \sigma ) = (-1, 1)$. 
}
\end{example}

Examples \ref{example:degree of Dehn twist}, 
\ref{example:degree of I-bundle involution}, and
\ref{example:degree of book rotation}
allow us to predict that
$\rho(1)$ and $\sigma(1)$ should give a gap between
$\Mdy(M,\Sigma)$ and the subgroup $\langle \Mdy(V_1), \Mdy(V_2)\rangle$ 
for a Heegaard spitting $M=V_1\cup_{\Sigma} V_2$
induced from an open book decomposition.
We are going to verify that 
when $M$ has an aspherical prime summand. 

\begin{lemma}
\label{lem:basic property of lifted degree}
The invariant $\widehat{\Deg}:\Pi (M, \Sigma) \to \ZZ^2$ has the following properties.
\begin{enumerate}
\renewcommand{\labelenumi}{$(\arabic{enumi})$}
\item
Let $\alpha$ be an element of $\Pi (M, \Sigma)$, and let 
$\widehat{\Deg} (\alpha) = (d_1, d_2)$.
Then we have
\[d_1 + d_2 = - 1 + \deg  (\partial_+(\alpha))  .\]
\item
For any pair $\alpha$, $\beta$ of elements of $\Pi (M, \Sigma)$, 
we have
\[
\widehat{\Deg} (\alpha \cdot \beta) 
= \widehat{\Deg}(\alpha) + \deg  (\partial_+(\alpha)) \cdot \widehat{\Deg}(\beta).
\]

\end{enumerate}
\end{lemma}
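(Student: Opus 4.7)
The plan is to reduce both identities to calculations on the relative fundamental class $[\Sigma \times I] \in H_3(\Sigma \times I, \Sigma \times \partial I)$, tracking its image through the various maps using Convention \ref{conv:orientation}.

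For (1), I would use naturality of the connecting homomorphism in the long exact sequence of a pair, applied to the map of pairs $\alpha \colon (\Sigma \times I, \Sigma \times \partial I) \to (M, \Sigma)$. The upper boundary sends $[\Sigma \times I]$ to $[\Sigma \times \{1\}] - [\Sigma \times \{0\}]$ in $H_2(\Sigma \times \partial I)$. Passing to $H_2(\Sigma)$: the restriction $\alpha|_{\Sigma \times \{0\}}$ is the inclusion $j$, so $j_*[\Sigma] = [\Sigma]$; and $\alpha|_{\Sigma \times \{1\}}$ is a self-homeomorphism of $\Sigma$ representing $\partial_+(\alpha)$, hence pushes $[\Sigma]$ to $(\deg \partial_+(\alpha))[\Sigma]$. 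Going the other way around, excision identifies $H_3(M, \Sigma)$ with $H_3(V_1, \partial V_1) \oplus H_3(V_2, \partial V_2)$, and Convention \ref{conv:orientation} gives $\partial [V_i] = [\Sigma]$ for both $i$, so $d_1[V_1] + d_2[V_2]$ maps to $(d_1 + d_2)[\Sigma]$. Equating the two expressions yields (1).

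For (2), the key observation is that the concatenation $\alpha \cdot \beta$ sends the extra slice $\Sigma \times \{1/2\}$ into $\Sigma \subset M$, so it is in fact a map of pairs $(\Sigma \times I, \Sigma \times (\partial I \cup \{1/2\})) \to (M, \Sigma)$. Excision then decomposes $[\Sigma \times I]$ into $[\Sigma \times [0, 1/2]] + [\Sigma \times [1/2, 1]]$ in the enlarged relative group, allowing me to push each half forward separately. Reparametrizing the first half identifies its image with $\alpha_*[\Sigma \times I] = d_1[V_1] + d_2[V_2]$. After reparametrization, the second half factors as $\beta \circ (\partial_+(\alpha) \times \id_I)$; the self-homeomorphism $\partial_+(\alpha) \times \id_I$ of the pair $(\Sigma \times I, \Sigma \times \partial I)$ has degree $\deg \partial_+(\alpha)$, so the contribution is $\deg \partial_+(\alpha) \cdot \beta_*[\Sigma \times I] = \deg \partial_+(\alpha) \cdot (e_1[V_1] + e_2[V_2])$, where $(e_1, e_2) = \widehat{\Deg}(\beta)$. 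Summing the two contributions gives (2).

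The only step requiring real care is the bookkeeping of orientations and signs, since $\widehat{\Deg}$ is claimed to be independent of the ambient orientation of $M$; here one needs the cross-product boundary formula $\partial([\Sigma] \times [I]) = [\Sigma] \times \partial[I]$ (whose sign $(-1)^{\dim \Sigma} = +1$ is harmless) to line up correctly with Convention \ref{conv:orientation}. Beyond this, no new geometric input is required: once orientations are pinned down, both identities are formal consequences of naturality, excision, and the interpretation of $\partial_+(\alpha)$ as an element of $\Homeo(\Sigma)$.
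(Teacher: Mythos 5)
Your proposal is correct and follows essentially the same route as the paper: part (1) is proved by the identical commutative diagram of connecting homomorphisms, and part (2) by the same decomposition of $(\alpha\cdot\beta)_*[\Sigma\times I]$ into the contribution of $\alpha$ plus $\deg\partial_+(\alpha)$ times that of $\beta$ via the factorization $\beta\circ(\partial_+(\alpha)\times\id_I)$ on the second half. The only difference is that you spell out the excision argument on the enlarged pair $(\Sigma\times I,\,\Sigma\times(\partial I\cup\{1/2\}))$ justifying the additivity, which the paper states without comment.
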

In the above lemma, $\deg (\partial_+(\alpha)) \in \{ \pm 1 \}$ is the degree of the terminal end
$\partial_+(\alpha)=\alpha(1) \in\Mdy(M,\Sigma)<\MCG(\Sigma)$ of $\alpha$,
as a mapping class of the closed, orientable surface $\Sigma$.

\begin{proof}
\noindent (1) Let $\alpha$ be a homotopy motion of $\Sigma$ in $M$ with 
terminal end $\alpha(1) = f$. 
Consider the following commutative diagram: 
\[
  \begin{CD}
H_3 (\Sigma \times I, \Sigma \times \partial I) @>{\alpha_*}>> H_3(M, \Sigma) \\
  @VVV    @VVV \\
H_2 (\Sigma \times \partial I )  
@>(\alpha|_{\Sigma\times\partial I})_*>>
H_2(\Sigma) ,
  \end{CD}
\]
where the vertical arrows are the connecting homomorphisms. 
By the connecting homomorphism $H_3 (\Sigma \times I, \Sigma \times \partial I) \to 
H_2(\Sigma \times \partial I)$, the fundamental class $[\Sigma \times I]$ 
is mapped to 
$-[\Sigma \times \{ 0 \}] + [\Sigma \times \{ 1 \}]$. 
The homomorphism 
$(\alpha|_{\Sigma\times\partial I})_*:H_2 (\Sigma\times \partial I) 
\to  H_2(\Sigma)$ 
then takes this to 
\[j_* (-[\Sigma]) + (j \circ f)_* ([\Sigma]) = (-1 + \deg (f)) [\Sigma], \]
where $j : \Sigma \to M$ is the inclusion map. 
On the other hand, the
homomorphism 
$\alpha_* : 
H_3 (\Sigma \times I, \Sigma \times \partial I) \to H_3 (M, \Sigma)$ 
takes $[\Sigma \times I]$ to $d_1 [V_1] + d_2 [V_2]$, and then, 
the connecting map 
$H_3(M, \Sigma) \to H_2(\Sigma)$ takes this to $(d_1 + d_2) [\Sigma]$. 
This implies $d_1 + d_2 = - 1 + \deg (f)$. 

\noindent (2) Let $f=\partial_+(\alpha)=\alpha(1)$ be the terminal end of $\alpha$. 
Then the concatenation $\alpha \cdot \beta$ is given by
\[
\alpha \cdot \beta(x,t) = 
\begin{cases}
\alpha(x,2t) & (0 \leq t \leq 1/2) \\
\beta(f(x), 2t-1)  & (1/2 \leq t \leq 1).
\end{cases}
\]
Let $\widehat{\Deg} (\alpha) = (d_1, d_2)$ and $\widehat{\Deg} (\beta) = (e_1, e_2)$. 
The assertion then follows from  
\begin{align*}
(\alpha \cdot \beta)_* ( [\Sigma \times I] ) 
&= 
\alpha_* ( [\Sigma \times I] ) + ( \beta_* \circ (f \times \id_I)_* ) ([\Sigma \times I]) \\
&= (d_1 [V_1] + d_2 [V_2]) + \deg (f \times \id_I) ( e_1 [V_1] + e_2 [V_2])\\
&= (d_1 [V_1] + d_2 [V_2]) + \deg (f) \cdot ( e_1 [V_1] + e_2 [V_2]).
\end{align*}
\end{proof}

The following corollary 
generalises
Examples \ref{example:degree of Dehn twist} and 
\ref{example:degree of I-bundle involution}.

\begin{corollary}
\label{lem:homological degree for gammai}
Let $\alpha$ be a homotopy motion of $\Sigma$ in $V_i$ $($$i=1$ or $2$$)$ 
with terminal end $f$,
and regard it as a homotopy motion of $\Sigma$ in $M$.
Then $\widehat{\Deg}(\alpha)=( - 1 + \deg (f), 0)$ or $(0, - 1 + \deg (f))$
according to whether $i=1$ or $2$.
\end{corollary}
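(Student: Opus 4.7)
The plan is to exploit the fact that the image of the homotopy motion $\alpha$ lies entirely in the handlebody $V_i$, together with the sum formula from Lemma \ref{lem:basic property of lifted degree}(1). I will treat the case $i=1$ in detail; the case $i=2$ is symmetric.

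First I would factor the induced map on homology through the pair $(V_1,\Sigma)$. Since $\alpha(\Sigma\times I)\subset V_1$, the map $\alpha_*:H_3(\Sigma\times I,\Sigma\times\partial I)\to H_3(M,\Sigma)$ factors as
\[
H_3(\Sigma\times I,\Sigma\times\partial I)\xrightarrow{\alpha_*} H_3(V_1,\partial V_1)\xrightarrow{\iota_*} H_3(M,\Sigma),
\]
where $\iota:(V_1,\partial V_1)\hookrightarrow (M,\Sigma)$ is the inclusion of pairs. Under the splitting $H_3(M,\Sigma)\cong H_3(V_1,\partial V_1)\oplus H_3(V_2,\partial V_2)$ (which can be seen, for instance, from excision together with the fact that $M=V_1\cup V_2$ and $V_1\cap V_2=\Sigma$), the homomorphism $\iota_*$ is the inclusion into the first summand, while the analogous inclusion for $V_2$ hits the second summand. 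Consequently the $V_2$-component of $\alpha_*([\Sigma\times I])$ must vanish, so $d_2=0$.

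Next I would pin down the remaining component $d_1$ by applying Lemma \ref{lem:basic property of lifted degree}(1), which yields $d_1+d_2=-1+\deg f$. Combining with $d_2=0$ gives $d_1=-1+\deg f$, and therefore $\widehat{\Deg}(\alpha)=(-1+\deg f,0)$, as claimed. The argument for $i=2$ is identical after swapping the roles of $V_1$ and $V_2$, and the sign convention of Convention \ref{conv:orientation} ensures that the same sum formula applies without any extra sign.

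I do not anticipate a serious obstacle here: the only point that requires a little care is the identification of the direct-sum decomposition of $H_3(M,\Sigma)$ compatible with the fixed orientations, which is forced by Convention \ref{conv:orientation}. Once that bookkeeping is fixed, the proof is essentially a two-line consequence of Lemma \ref{lem:basic property of lifted degree}(1) together with the observation that $\alpha$ never leaves $V_i$.
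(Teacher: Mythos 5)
Your argument is correct and is essentially the paper's own proof: one observes that the image of $\alpha$ lies in $V_1$, forcing $d_2=0$, and then concludes $d_1=-1+\deg f$ from Lemma \ref{lem:basic property of lifted degree}(1). The factoring through $H_3(V_1,\partial V_1)$ is just a slightly more explicit way of phrasing the same vanishing of the $V_2$-component.
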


\begin{proof}
Put $(d_1,d_2)=\widehat{\Deg}(\alpha)$ and 
suppose that $\alpha$ comes from a homotopy motion in $V_1$. 
Then the image of $\alpha$ is equal to $V_1$, and so we have $d_2=0$.
By Lemma \ref{lem:basic property of lifted degree}(1),
$d_1=-1+\deg (f) - d_2=-1+\deg (f)$, 
completing the proof for the case $i=1$.
The remaining case $i=2$ is proved by the same argument.
\end{proof}

The following corollary is a consequence of 
Lemma \ref{lem:basic property of lifted degree}(2)
and the definition of  a semi-direct product.

\begin{corollary}
\label{cor:semi-direct product}
Let $C_2=\{\pm 1\}$ be the order-$2$ cyclic group, and
consider its action on $\ZZ^2$ defined by $(-1)\cdot (d_1,d_2)=(-d_1,-d_2)$.
Let $\ZZ^2\rtimes C_2$ be the semi-direct product
determined by this action.
Then the map $\Pi(M,\Sigma) \to \ZZ^2\rtimes C_2$ defined by
$\alpha\mapsto (\widehat{\Deg}(\alpha) , \deg (\partial_+(\alpha)))$
is a group homomorphism.
\end{corollary}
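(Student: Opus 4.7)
The plan is to verify directly that the stated map respects products, unpacking the definition of the semi-direct product. Write $\Psi : \Pi(M,\Sigma) \to \ZZ^2 \rtimes C_2$ for the candidate map $\alpha \mapsto (\widehat{\Deg}(\alpha), \deg \partial_+(\alpha))$. First I would recall that in the semi-direct product determined by the sign action, the product of two elements is given by
\[
(d,\epsilon)\cdot(e,\delta) \;=\; (d+\epsilon\cdot e,\ \epsilon\delta),
\]
where $\epsilon\cdot e$ denotes coordinate-wise multiplication by $\epsilon\in\{\pm 1\}$. So the task reduces to checking the two coordinates separately.

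For the first coordinate, I would invoke Lemma~\ref{lem:basic property of lifted degree}(2) directly: it asserts precisely
\[
\widehat{\Deg}(\alpha\cdot\beta) \;=\; \widehat{\Deg}(\alpha) + \deg\partial_+(\alpha)\cdot\widehat{\Deg}(\beta),
\]
which matches the first coordinate of the semi-direct product formula with $\epsilon=\deg\partial_+(\alpha)$. For the second coordinate, I would use the fact that $\partial_+$ is a (anti-)homomorphism from $\Pi(M,\Sigma)$ to $\MCG(\Sigma)$ (see the discussion preceding Definition~\ref{def:monodromy group}), together with the fact that the degree homomorphism $\MCG(\Sigma)\to\{\pm 1\}$ is multiplicative and takes values in the abelian group $C_2$. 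Since the degree of a composition of surface homeomorphisms is the product of their degrees and multiplication in $C_2$ is commutative, we have
\[
\deg\partial_+(\alpha\cdot\beta) \;=\; \deg\partial_+(\alpha)\cdot\deg\partial_+(\beta),
\]
matching the second coordinate.

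Well-definedness of $\Psi$ on equivalence classes is automatic, since both $\widehat{\Deg}$ and $\partial_+$ were already shown to descend to $\Pi(M,\Sigma)$. The identity element goes to $((0,0),+1)$ because the constant homotopy motion has trivial terminal end and extends to a map $\Sigma\times I \to \Sigma$ whose composition with the inclusion carries $[\Sigma\times I]$ to zero in $H_3(M,\Sigma)$. There is no real obstacle here; the content of the corollary is entirely packaged in Lemma~\ref{lem:basic property of lifted degree}(2), and the proof is essentially a one-line unpacking of the semi-direct product multiplication together with the observation that $\deg\circ\partial_+ : \Pi(M,\Sigma)\to C_2$ is a homomorphism.
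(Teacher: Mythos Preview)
Your proposal is correct and takes essentially the same approach as the paper, which simply states that the corollary is a consequence of Lemma~\ref{lem:basic property of lifted degree}(2) and the definition of a semi-direct product. You have merely spelled out in detail what the paper leaves implicit.
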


By the above corollary 
we can define a map $\Deg : \Gamma (M, \Sigma) \to \ZZ^2$ so that the diagram 
\[
  \xymatrix{
    1 \ar[r] & \mathcal{K} (M, \Sigma) \ar[r] & \Pi (M, \Sigma) \ar[r]^{\partial_+} \ar[d]_{\widehat{\Deg}} &  \Mdy (M, \Sigma) \ar[r] \ar[ld]^{\Deg} & 1 \\  
                &                                              &  \ZZ^2                                        & 
}\]
commutes if and only if $\widehat{\Deg}$ vanishes on 
$\mathcal{K} (M, \Sigma)$.
By Example \ref{example:Deg and deg},
the latter condition is satisfied 
if and only if the homomorphism $\deg : \mathcal{K} (M, \Sigma) \to \ZZ$ vanishes,
namely $(M,\Sigma)$ is not dominated by $\Sigma\times S^1$.
Hence, Theorems \ref{thm:non-zero degree maps 1} and 
\ref{thm:non-zero degree maps 2}  
imply the following proposition.

\begin{proposition}
\label{prop:well-definedness of homological degree for aspherical manifolds}
Let $M = V_1 \cup_\Sigma V_2$ be a Heegaard splitting of a closed, orientable 
$3$-manifold $M$.
Then if $M$ has an 
aspherical prime summand, 
the map $\Deg : \Mdy (M, \Sigma) \to \ZZ^2$ is well-defined. 
\end{proposition}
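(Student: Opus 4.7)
The plan is to trace through the chain of equivalences already set up in the excerpt and then invoke Theorem \ref{thm:non-zero degree maps for geometric or prime manifolds}(1). The statement that $\Deg : \Mdy(M,\Sigma) \to \ZZ^2$ is well-defined is precisely the assertion that $\widehat{\Deg}$ factors through the quotient $\Pi(M,\Sigma) \to \Mdy(M,\Sigma) = \Pi(M,\Sigma)/\mathcal{K}(M,\Sigma)$, i.e.\ that $\widehat{\Deg}$ is constant on each coset of $\mathcal{K}(M,\Sigma)$.

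First, I would reduce this constancy to a vanishing statement on the kernel. For $\alpha \in \Pi(M,\Sigma)$ and $\gamma \in \mathcal{K}(M,\Sigma)$, one has $\deg \partial_+(\gamma) = 1$, so by Lemma \ref{lem:basic property of lifted degree}(2),
\[
\widehat{\Deg}(\alpha \cdot \gamma) \;=\; \widehat{\Deg}(\alpha) + \deg \partial_+(\alpha) \cdot \widehat{\Deg}(\gamma),
\]
and symmetrically for left multiplication. Hence $\widehat{\Deg}$ is constant on cosets of $\mathcal{K}(M,\Sigma)$ if and only if $\widehat{\Deg}|_{\mathcal{K}(M,\Sigma)} \equiv 0$.

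Next, by Example \ref{example:Deg and deg}, $\widehat{\Deg}(\alpha) = (\deg \alpha, \deg \alpha)$ for every $\alpha \in \mathcal{K}(M,\Sigma)$, so the preceding vanishing condition is equivalent to the homomorphism $\deg : \mathcal{K}(M,\Sigma) \to \ZZ$ being identically zero. By Definition \ref{def:Sigma-domination} together with Lemma \ref{lem:degree-KernelGroup}, this in turn is equivalent to $(M,\Sigma)$ \emph{not} being dominated by $\Sigma \times S^1$.

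Finally, I would invoke Theorem \ref{thm:non-zero degree maps for geometric or prime manifolds}(1), which asserts exactly that if $M$ has an aspherical prime summand then $(M,\Sigma)$ is not dominated by $\Sigma \times S^1$. There is no real obstacle remaining at this stage: the substantive content has been absorbed into Theorem \ref{thm;K(M,S) of aspherical Seifert manifolds} and the Haken connected-sum reduction used in the proof of Theorem \ref{thm:non-zero degree maps for geometric or prime manifolds}(1), and this proposition is a direct bookkeeping consequence.
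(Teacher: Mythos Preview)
Your proof is correct and follows essentially the same approach as the paper: the paper states (in the paragraph immediately preceding the proposition) that by Corollary~\ref{cor:semi-direct product}, $\Deg$ is well-defined iff $\widehat{\Deg}$ vanishes on $\mathcal{K}(M,\Sigma)$, which by Example~\ref{example:Deg and deg} is equivalent to $\deg$ vanishing, and then invokes Theorem~\ref{thm:non-zero degree maps for geometric or prime manifolds}. The only cosmetic difference is that you unpack the coset-constancy reduction directly via the cocycle relation in Lemma~\ref{lem:basic property of lifted degree}(2) rather than citing the semi-direct product Corollary~\ref{cor:semi-direct product}, but these are the same argument.
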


From the properties of $\widehat{\Deg}$ we have the following.

\begin{lemma}
\label{lem:homological degree for a composition}
Let $M = V_1 \cup_\Sigma V_2$ be a Heegaard splitting of a closed, orientable 
$3$-manifold $M$, and assume that the map $\widehat{\Deg}$
vanishes on $\mathcal{K} (M, \Sigma)$ and so the map
$\Deg:\Mdy(M,\Gamma) \to \ZZ$ is defined.
Then the following hold. 
\begin{enumerate}
\renewcommand{\labelenumi}{$(\arabic{enumi})$}
\item
Let $f$ be an element of $\Mdy (M, \Sigma)$, and 
suppose $\Deg (f) = (d_1, d_2)$. 
Then we have $d_1 + d_2 = - 1 + \deg (f)$. 
\item
For any $f, g \in \Mdy (M, \Sigma)$, we have 
$\Deg (g \circ f) 
= \Deg (f) + \deg (f) \cdot \Deg (g)$ 
\item
For any $f \in \Mdy (V_1)$, we have $\Deg (f) = ( - 1 + \deg (f), 0)$; and  
for any $f \in \Mdy (V_2)$, we have $\Deg (f) = (0, - 1 + \deg (f))$. 
\item 
If $f \in \langle \Mdy (V_1), \Mdy (V_2) \rangle$, then $\Deg (f)\equiv (0,0) \pmod{2}$. 
\end{enumerate}
\end{lemma}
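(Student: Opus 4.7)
My plan is to lift each of the four assertions about $\Deg$ to the parallel assertion about $\widehat{\Deg}$ on the homotopy motion group $\Pi(M,\Sigma)$, where Lemma \ref{lem:basic property of lifted degree} and Corollary \ref{lem:homological degree for gammai} have already done the substantive work. Since the hypothesis of the lemma is that $\widehat{\Deg}$ vanishes on $\mathcal{K}(M,\Sigma)$, the value $\Deg(f)$ is independent of the choice of a homotopy motion $\alpha$ with $\partial_+(\alpha)=f$, so I may freely pick any convenient lift.

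For (1), I would apply Lemma \ref{lem:basic property of lifted degree}(1) to any such lift $\alpha$, observing that $\deg\partial_+(\alpha)=\deg f$. For (2), I would take lifts $\alpha$ of $f$ and $\beta$ of $g$; by the concatenation formula, $\alpha\cdot\beta$ has terminal end $g\circ f$, and Lemma \ref{lem:basic property of lifted degree}(2) yields the claimed identity directly. For (3), an element $f\in\Mdy(V_i)$ admits a lift $\alpha$ whose image lies in $V_i$, so Corollary \ref{lem:homological degree for gammai} applies.

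For (4), I would argue by induction on the word length of $f$ expressed as a product of elements of $\Mdy(V_1)\cup\Mdy(V_2)$. The key observation is that every homeomorphism of a closed orientable surface has degree $\pm 1$, so for any single generator $g$ one has $-1+\deg g\in\{0,-2\}$, hence $\Deg(g)\in(2\ZZ)^2$ by (3). Since the multipliers appearing in (2) lie in $\{\pm 1\}$, the subgroup $(2\ZZ)^2\subset\ZZ^2$ is preserved under the recursion $\Deg(g\circ f)=\Deg(f)+\deg f\cdot\Deg(g)$; by induction, $\Deg(f)\in(2\ZZ)^2$ for every $f\in\langle\Mdy(V_1),\Mdy(V_2)\rangle$. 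Combining this with (1), which forces $d_1+d_2=-1+\deg f\in\{0,-2\}$, yields the explicit forms $(2k,-2k)$ or $(2k-2,-2k)$ according to whether $f$ is orientation-preserving or reversing.

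I do not expect to encounter a real obstacle here; this lemma is essentially bookkeeping on top of the machinery developed earlier. The only subtle point to watch is that $\partial_+$ is an anti-homomorphism, so the order of composition reverses when translating between concatenation in $\Pi(M,\Sigma)$ and composition in $\MCG(\Sigma)$, and one must keep this straight when applying (2) during the induction.
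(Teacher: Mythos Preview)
Your proposal is correct and follows essentially the same approach as the paper: parts (1)--(3) are derived exactly as you describe from Lemma~\ref{lem:basic property of lifted degree} and Corollary~\ref{lem:homological degree for gammai}, and part (4) is proved by induction on word length using the recursion in (2). Your treatment of (4) is in fact a mild streamlining of the paper's argument---the paper tracks the explicit form $(2k,-2k)$ or $(2k-2,-2k)$ through a case-by-case induction, whereas you first establish $\Deg(f)\in(2\ZZ)^2$ by a parity-preservation argument and then invoke (1) once at the end to pin down the form.
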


\begin{proof}
The assertions (1) and (2) follow from 
Lemma \ref{lem:basic property of lifted degree},
and the assertion (3) follows from 
Corollary \ref{lem:homological degree for gammai}.

(4) If $f$ belongs to either $\Mdy (V_1)$ or $\Mdy (V_2)$,
then the assertion holds by (3).
Since 
\[ 
\Deg (g \circ f) \equiv \Deg (f) + \Deg (g) \pmod{2}
\]
by (2), we obtain the desired result for every
$f \in \langle \Mdy (V_1), \Mdy (V_2) \rangle$. 
\end{proof}

\begin{remark}
{\rm
Lemma \ref{lem:homological degree for a composition}(4) can refined as follows:
If $f \in \langle \Mdy (V_1), \Mdy (V_2) \rangle$, 
then $\Deg (f)$ is one of $(2k, -2k)$ and $(2k-2, -2k)$ for some $k \in \ZZ$,
according to whether $f$ is orientation-preserving or reversing. 
}
\end{remark}

Now we are ready to prove Theorem $\ref{thm:well-definedness of homological degree for aspherical manifolds}$. 

\begin{proof}[Proof of Theorem $\ref{thm:well-definedness of homological degree for aspherical manifolds}$]
Let $M = V_1 \cup_{\Sigma} V_2$ be the Heegaard splitting of 
a closed, orientable $3$-manifold $M$ induced from an open book decomposition, and 
assume that 
$M$ has an aspherical prime summand. 
Then by Proposition \ref{prop:well-definedness of homological degree for aspherical manifolds}, 
the map $\Deg : \Mdy (M, \Sigma) \to \ZZ^2$ is well-defined. 
Let $\rho$ and $\sigma$ be the half rotation and the unilateral rotation of $\Sigma$, respectively.
Then by Example \ref{example:degree of book rotation}, we have
$\Deg(\rho(1)) = (-1, -1)$ and $\Deg (\sigma(1) ) = (-1, 1)$.
Therefore, $\rho(1)$ and $\sigma(1)$ do not belong to $\langle \Mdy (V_1), \Mdy (V_2) \rangle$ by 
Lemma \ref{lem:homological degree for a composition}(4), as desired.
\end{proof}
 
In the above proof, 
we have shown that neither $\sigma(1)$ nor $\rho (1)$ is contained in 
$\langle \Mdy (V_1), \Mdy (V_2) \rangle$. 
Clearly, the same consequence holds for any odd power of $\sigma (1)$ and $\rho (1)$ by 
Lemma $\ref{lem:homological degree for a composition}$. 
We do not know, however, whether $\sigma (1)^2$ or $\rho (1)^2$ 
is contained in $\langle \Mdy (V_1), \Mdy (V_2) \rangle$. 

\begin{question}
{\rm
Under the assumption of Theorem $\ref{thm:well-definedness of homological degree for aspherical manifolds}$, 
is $\sigma (1)^2$ or $\rho (1)^2$ contained in $\langle \Mdy (V_1), \Mdy (V_2) \rangle$?}
\end{question}

We see from a result in the companion paper \cite{KodaSakuma20b} that,
for the genus-$1$ Heegaard surface $\Sigma$ of a lens space $L(p,q)$,
there is a gap between $\Mdy(L(p,q),\Sigma)$ and 
$\langle \Mdy (V_1), \Mdy (V_2) \rangle$ generically.
This and Theorem \ref{thm:well-definedness of homological degree for aspherical manifolds} 
are the only examples of Heegaard splittings we know 
for which there are gaps between the two groups. 
By the way, the Hempel distance of a Heegaard splitting 
induced from an open book decomposition is at most $2$.
Thus, we pose the following question.

\begin{question}
{\rm
Let $\Sigma$ be a Heegaard surface of genus at least $2$ of a closed, orientable $3$-manifold $M$.
Is it true that  
$\Mdy(M,\Sigma)=\langle \Mdy (V_1), \Mdy (V_2) \rangle$
if $\Sigma$ has high Hempel distance?
} 
\end{question}

\section{The virtual branched fibration theorem and the group $\langle \Mdy (V_1), \Mdy (V_2) \rangle$}
\label{sec:The virtual branched fibration theorem and the group Mdy (V_1), Mdy (V_2)}

In this section,
we give yet another motivation for studying the group
$\Mdy(M,\Sigma)$ and its subgroup $\langle \Mdy(V_1), \Mdy(V_2)\rangle$
associated with a Heegaard splitting $M=V_1\cup_{\Sigma}V_2$.
To describe this, let $\Inv(V_i)$ ($\subset \MCG(\Sigma)$)
be the set of torsion elements of $\Mdy(V_i)$.
(In fact, this set will turn out to be equal to the set of 
vertical $I$-bundle involutions of $V_i$ as shown in Lemma \ref{lem:I-bundle-Involution}.)
Then we have the following theorem, 
which refines the observation \cite[Addendum 1]{Sakuma81} that
every closed, orientable $3$-manifold $M$ admits a surface bundle 
as a double branched covering space.

\begin{theorem}
\label{thm:vbf-theorem}
Let $M=V_1\cup_{\Sigma} V_2$ be a Heegaard splitting of 
a closed, orientable $3$-manifold $M$.
Then there exists a double branched covering $p:\tilde M \to M$
that satisfies the following conditions.
\begin{enumerate}
\renewcommand{\labelenumi}{$(\roman{enumi})$}
\item
$\tilde M$ is a surface bundle over $S^1$
whose fiber is homeomorphic to $\Sigma$.
\item
The preimage $p^{-1}(\Sigma)$ of the Heegaard surface $\Sigma$
is a union of two $($disjoint$)$ fiber surfaces.
\end{enumerate}
Moreover, the set $D(M,\Sigma)$ of monodromies of such bundles
is equal to the set $\{h_1 \circ h_2 \mid h_i\in \Inv(V_i)\}$,
up to conjugation and inversion.
\end{theorem}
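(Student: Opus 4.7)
The plan is to prove both the existence of the double branched cover and the characterization of the set of monodromies through a single construction that converts pairs $(h_1,h_2)\in \Inv(V_1)\times\Inv(V_2)$ into double branched covers of $M$ and back.  By Lemma~\ref{lem:I-bundle-Involution}, each $h\in \Inv(V_i)$ is the restriction to $\Sigma=\partial V_i$ of a vertical $I$-bundle involution $\tilde h$ of $V_i$, giving $V_i$ the structure of an $I$-bundle over $F_i=V_i/\tilde h$.  The key preliminary step will be a local model identifying $V_i$ with the quotient of $\Sigma\times I$ by the involution $\iota(x,t)=(h(x),1-t)$: the projection $\Sigma\times I\to(\Sigma\times I)/\iota\cong V_i$ is a double branched covering whose branch locus in $V_i$ is the image of $\Fix(h)\times\{1/2\}$, a link in $\interior(V_i)$, and both boundary components of $\Sigma\times I$ project homeomorphically onto $\partial V_i$.

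For the inclusion $\{h_1\circ h_2 \mid h_i \in \Inv(V_i)\}\subseteq D(M,\Sigma)$ up to conjugation and inversion, I would start from any $h_i\in\Inv(V_i)$, apply the local model to obtain double branched covers $q_i\colon \Sigma\times I_i \to V_i$, and glue $\Sigma\times I_1$ to $\Sigma\times I_2$ along both pairs of boundary $\Sigma$'s by homeomorphisms of $\Sigma$ chosen so that the composite $q_1\sqcup q_2$ descends to the Heegaard identification $\partial V_1=\partial V_2=\Sigma$.  A direct compatibility calculation shows that these two boundary gluings are realized respectively by $h_1$ and $h_2$.  The resulting closed $3$-manifold $\tilde M$, built by cyclically concatenating $I_1$ and $I_2$, is then tautologically a $\Sigma$-bundle over $S^1$ whose preimage $p^{-1}(\Sigma)$ consists of exactly the two gluing fibers, giving conditions $(i)$ and $(ii)$; and the monodromy read around the base circle is the composition of the two boundary gluings, namely $h_2\circ h_1$, which coincides with $h_1\circ h_2$ modulo conjugation and inversion.

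Conversely, given any $p\colon \tilde M\to M$ satisfying $(i)$ and $(ii)$, write $p^{-1}(\Sigma)=\Sigma_0\sqcup\Sigma_{1/2}$.  Because each $\Sigma_i$ maps homeomorphically onto $\Sigma$, the covering is trivial over $\Sigma$, so the nontrivial deck involution $\tau$ swaps $\Sigma_0$ and $\Sigma_{1/2}$.  Cutting $\tilde M$ along $p^{-1}(\Sigma)$ produces two pieces $\tilde V_i\cong \Sigma\times I$, each preserved by $\tau$ and doubly covering the corresponding $V_i$; thus $\tau|_{\tilde V_i}$ is an involution of $\Sigma\times I$ interchanging its boundary components whose quotient is the handlebody $V_i$.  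A structure theorem, to be derived from equivariant handlebody theory together with Lemma~\ref{lem:I-bundle-Involution}, will let me put $\tau|_{\tilde V_i}$ into the normal form $(x,t)\mapsto (h_i(x),1-t)$ for some $h_i\in \Inv(V_i)$, after which the monodromy calculation of the existence part identifies the monodromy of $\tilde M$ with $h_1\circ h_2$.  The main obstacle will be this structure theorem, in both its forward direction (showing that the quotient $(\Sigma\times I)/\iota$ is homeomorphic as a manifold with boundary to the prescribed handlebody $V_i$) and its converse (rigidity of such reflection-type involutions up to equivariant isotopy); I anticipate that the forward direction reduces to a careful matching against the natural $I$-bundle structure on $V_i$, while the converse requires controlling the global homeomorphism type of the handlebody quotient via its equivariant spine.
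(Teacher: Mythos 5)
Your plan follows the paper's proof essentially verbatim: your local model is the paper's Lemma \ref{lem:I-bundle-handlebody}, the gluing of the two copies of $\Sigma\times I$ and the resulting monodromy computation are identical, and the converse proceeds by cutting $\tilde M$ along $p^{-1}(\Sigma)$ exactly as in the paper. The ``structure theorem'' you defer is supplied there by Kim--Tollefson's classification of PL involutions of $\Sigma\times I$ interchanging the boundary components (which normalizes $\tau|_{\tilde V_i}$ to $h_i\times(-1)$), followed by an analysis of the fixed-point set of the induced involution of $V_i$ showing it is a vertical $I$-bundle involution, hence $h_i\in\Inv(V_i)$.
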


\begin{figure}[htbp]
\centering\includegraphics[width=7.5cm]{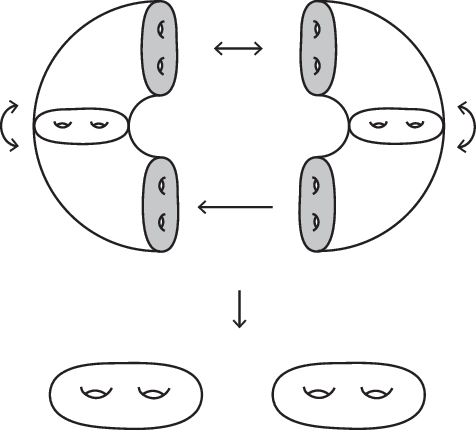}
\begin{picture}(400,0)(0,0)
\put(143,0){$V_1$}
\put(242,0){$V_2$}
\put(41,146){$h_1 \times (-1)$}
\put(310,146){$h_2 \times (-1)$}
\put(192,23){$\cup_{\Sigma}$}
\put(204,67){$p$}
\put(183,120){$h_1 \circ h_2$}
\put(192,190){$1_\Sigma$}
\put(85,200){$\tilde{M}$}
\put(85,40){$M$}
\end{picture} 
\caption{The double branched covering $p : \tilde{M} \to M$.}
\label{fig.vbf}
\end{figure}

\begin{example}
\label{example:Kotschick-Neofytidis}
{\rm
Let $M= \#_g (S^2 \times S^1)$, and  $V_1\cup_{\Sigma}V_2$ the genus-$g$ Heegaard splitting.
Recall that $\Mdy (V_1)=\Mdy(V_2)$ 
(cf. Example \ref{example:Minsky's question for S2 times S1}).
Pick an element $h_1=h_2$ from $\Inv(V_1)=\Inv(V_2)$.
Then $h_1 \circ h_2=\id_{\Sigma}$ and hence the above theorem implies that
$\Sigma\times S^1$ is a double branched covering space of $M= \#_g (S^2 \times S^1)$,
and so $\Sigma\times S^1$ dominates $\#_g (S^2 \times S^1)$.
This gives the construction 
by Kotschick-Neofytidis \cite[Proposition 4]{KotschickNeofytidis13}.
}
\end{example}

We first prove the lemma below, following and correcting the arguments
of Zimmermann \cite[Proof of Corollary 1.3]{Zimmermann79}.

\begin{lemma}
\label{lem:I-bundle-Involution}
Let $V$ be a handlebody with $\partial V=\Sigma$.
Then an element of $\Mdy(V) < \MCG(\Sigma)$
is a nontrivial torsion element
if and only if
it is represented by 
$($the restriction to $\Sigma=\partial V$
of$)$ a vertical $I$-bundle involution of $V$.
\end{lemma}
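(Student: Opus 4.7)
The ``if'' direction is immediate. If $h$ is a vertical $I$-bundle involution of $V$ with base surface $F$, then $h$ is an involution of $V$, and since $V$ deformation retracts onto the zero-section $F\subset V$ (fixed pointwise by $h$), the induced map $h_*$ on $\pi_1(V)$ is the identity. Hence $[h|_\Sigma]$ is a torsion element of $\Mdy(V)$; it is nontrivial because the fixed-point set of $h|_\Sigma$ is nonempty of codimension one in $\Sigma$, whereas $\id_\Sigma$ has no such fixed set.

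For the converse, suppose $[f]\in \Mdy(V)$ is a nontrivial torsion element of order $n\ge 2$. The plan is to (i) realize $[f]$ by a genuine finite-order homeomorphism of $V$; (ii) upgrade triviality in $\Out(\pi_1(V))$ to an honest trivial action on $\pi_1(V)$; and (iii) conclude from this rigidity that the action is a vertical $I$-bundle flip. For (i), invoke the solution of the Nielsen realization problem for handlebodies (due to Zimmermann \cite{Zimmermann79}) to obtain a homeomorphism $\hat f\colon V\to V$ with $\hat f^n=\id_V$ representing $[f]$. For (ii), apply Smith theory to a prime-order power of $\hat f$ to locate a fixed point $x_0\in\Fix(\hat f)$. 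The induced automorphism $\hat f_*$ of $\pi_1(V,x_0)$ is inner, say conjugation by $g\in\pi_1(V,x_0)$, since $[f]\in\Mdy(V)$. From $\hat f^n=\id$ one gets $g^n\in Z(\pi_1(V,x_0))$; as $\pi_1(V)$ is a free group of positive rank, its centre is trivial and it is torsion-free, so $g^n=1$ forces $g=1$ and hence $\hat f_*=\id$.

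For (iii), consider the quotient orbifold $W=V/\langle\hat f\rangle$. Orbifold covering theory gives an exact sequence
\[
1\longrightarrow \pi_1(V)\longrightarrow \pi_1^{\mathrm{orb}}(W)\longrightarrow \ZZ/n\longrightarrow 1,
\]
with trivial action of $\ZZ/n$ on $\pi_1(V)$ by (ii). As $Z(\pi_1(V))=1$, the sequence splits, giving $\pi_1^{\mathrm{orb}}(W)\cong \pi_1(V)\times \ZZ/n$. Combining this with the classification of finite cyclic actions on handlebodies (in the spirit of McCullough--Miller--Zimmermann \cite{McCulloughMillerZimmermann89}), this will force $n=2$ and realize $V\to W$ as the orientation double cover of a twisted $I$-bundle, with $\hat f$ the corresponding vertical involution. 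The main obstacle is this final step: one must rule out exotic finite cyclic actions on $V$ that act trivially on $\pi_1(V)$ but are not $I$-bundle flips. This should be handled by Smith-theoretic analysis of the branch locus $\Fix(\hat f)\subset V$, whose structure (a properly embedded $1$- or $2$-submanifold) is severely constrained by the triviality of $\hat f_*$ on $\pi_1(V)$ together with the freeness of $\pi_1(V)$; the ``correction'' to Zimmermann's original argument alluded to in the statement presumably enters here.
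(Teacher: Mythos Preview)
Your proposal is not a complete proof: step (iii) is where the lemma's content lives, and you explicitly leave it unfinished. You correctly reduce to showing that a finite-order homeomorphism $\hat f$ of $V$ acting trivially on $\pi_1(V,x_0)$ must be a vertical $I$-bundle involution, but your hope that Smith-theoretic analysis of $\Fix(\hat f)$ together with the McCullough--Miller--Zimmermann classification will finish this is not an argument. There is also a slip in (ii): a handlebody of genus $g\ge 1$ is not mod-$p$ acyclic, so Smith theory does not produce a fixed point as stated, and a fixed point of a prime-order power of $\hat f$ need not be fixed by $\hat f$ itself. (For $g\ge 2$ one can repair this via Lefschetz: $\hat f_*$ inner implies $\hat f_*=\id$ on $H_1$, so $L(\hat f)=1-g\ne 0$. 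Your claim that the centre of a free group of positive rank is trivial also fails for rank $1$; the paper treats genus $\le 1$ separately by inspecting $\MCG(B^3)$ and $\MCG(S^1\times D^2)$.)

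The paper's proof (for genus $\ge 2$) supplies exactly the missing idea, via Kleinian groups rather than orbifold topology. Realize $h$ as an (anti-)conformal automorphism of a Riemann surface structure on $\Sigma$ (Kerckhoff), choose a Schottky uniformization so that $V\cong V(G)=(\HH^3\cup\Omega(G))/G$ (Bers), and extend to an isometry $h'$ of $V(G)$ (Marden's isomorphism theorem). A lift $\tilde h$ to $\HH^3$ then satisfies $k^{-1}\tilde h\in Z(G,\Isom\HH^3)$ for some $k\in G$, since $h\in\Mdy(V)$ means the conjugation action on $G$ is inner. The crux is that this centralizer is trivial unless $G$ preserves a hyperbolic plane, in which case it is the order-$2$ group generated by the reflection in that plane. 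Hence $h'$ is either the identity or that reflection, which is precisely a vertical $I$-bundle involution. This small-centralizer fact in $\Isom\HH^3$ is the engine of the proof, and it is what your outline lacks.
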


\begin{proof}
Since the ``if" part is clear, we prove the ``only if" part.
If the genus of $V$ is $0$ or $1$,
then the assertion can be easily proved by using
the facts that 
$\MCG(B^3)=\ZZ_2$ and
$\MCG(S^1\times D^2)\cong \MCG^+(S^1\times D^2)\rtimes \ZZ_2 \cong \Dh_{\infty}\rtimes \ZZ_2$, 
the semi-direct product of the infinite dihedral group $\Dh_{\infty}$
and the order-$2$ cyclic group $\ZZ_2$. 
Assume that the genus of $V$ is greater than $1$. 
Let $h$ be a torsion element of $\Mdy(V)\subset \MCG(\Sigma)$.
Then, by the solution of the Nielsen realization problem  
(see Kerckhoff \cite{Kerckhoff83}),
there exists a conformal structure on $\Sigma=\partial V$
and a conformal (or anti-conformal) map $h'$ of the Riemann surface $\Sigma$
that is isotopic to $h$.
By Bers \cite[Theorem 3]{Bers61},
the Riemann surface $\Sigma$ admits a Schottky uniformization,
i.e., there is a Schottky group $G$
such that the Riemann surface $\Sigma$ is conformally equivalent to
the Riemann surface $\partial \Omega(G)/G$,
where $\Omega(G)$ is the domain of discontinuity of $G$,
and such that the identification of $\Sigma$ with $\partial \Omega(G)/G$
extends to an identification of $V$ with $V(G):=(\HH^3\cup\Omega(G))/G$. 
By Marden's isomorphism theorem \cite[Theorem 8.1]{Marden},
$h'$ extends to an isometry of $V(G)$,
which we continue to denote by $h'$.
Let $\tilde h$ be the lift of $h'$ to $\HH^3$.
Then, by the assumption that $h\in \Mdy(V)$,
the conjugation action of $\tilde h$ on $G$
is an inner-automorphism of $G$, that is, 
there exists an element $k\in G$
such that $\tilde h \circ g \circ \tilde h^{-1}= k \circ g \circ k^{-1}$ for every $g\in G$.
Thus $\tilde h \circ k^{-1}$ belongs to the centralizer, $Z$,
of $G$ in 
the isometry group $\Isom(\HH^3)$. 
Since $G$ is a free group of rank $\ge 2$,
it follows that $Z$ is trivial
except when $G$ preserves a hyperbolic plane.
In the exceptional case, $Z$ is the order-$2$ cyclic group
generated by the reflection in the hyperbolic plane, and so
we may assume that $\tilde h$
is the reflection in the hyperbolic plane preserved by $G$. 
This implies that the isometry $h'$ of $V(G)$
is a vertical $I$-bundle involution.
This completes the proof of the lemma.
\end{proof}

\begin{remark}
{\rm
The assertion in the proof that
there exists a Schottky group $G$ 
such that $h$ is realized by an isometry of $V(G)$
is proved by Zimmermann \cite[Theorem 1.1]{Zimmermann79}
under a more general setting.
In fact, \cite[Theorem 1.1]{Zimmermann79}
says that any finite subgroup of $\MCG(V)$
is realized as a subgroup of the isometry group of $V(G)$. 
His proof is based on Zieschang's partial solution of the Nielsen realization problem,
which was available at that time,
and some delicate consideration on the group structure,
which guarantees that Zieschang's result is applicable to his setting.
Since we only need to consider cyclic groups,
we do not need the consideration of the group structure,
or we may simply appeal to Kerckhoff's full solution of 
the Nielsen realization problem \cite{Kerckhoff83}.
In our terminology,
\cite[Corollary 1.3]{Zimmermann79} should be read as follows:
the orientation-preserving subgroup of $\Mdy^+(V)$ of $\Mdy(V)$
is torsion-free.
(A similar proof of 
this result was also given by Otal \cite[Proposition 1.7]{Otal88},
and an outline of a similar proof, suggested by Minsky, is included in \cite[Introduction]{BestvinaFujiwara17}.)
Thus Lemma \ref{lem:I-bundle-Involution} is a slight extension 
of \cite[Corollary 1.3]{Zimmermann79}.
}
\end{remark}

\begin{lemma}
\label{lem:I-bundle-handlebody}
Let $V$ be a handlebody with $\partial V=\Sigma$,
and suppose that $h$ is 
an orientation-reversing involution of $\Sigma$
that extends to a vertical $I$-bundle involution of $V$.
Then there exists a double branched covering projection
$p:\Sigma\times [-1,1] \to V$ satisfying the following conditions. 
\begin{enumerate}
\renewcommand{\labelenumi}{$(\roman{enumi})$}
\item
$p(x,1)=p(h(x),-1)=x\in\Sigma=\partial V$ for every $x\in \Sigma$.
\item
The covering transformation is given by the involution 
$\hat h:=h\times (-1)$ of $\Sigma\times [-1,1]$
defined by $\hat h(x,t)=(h(x),-t)$. 
In particular, 
the branch set of $p$ is
equal to the image of $\Fix(h)\times \{0 \} \subset \Sigma\times [-1,1]$
in $V$.
\end{enumerate}
\end{lemma}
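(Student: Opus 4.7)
The plan is to construct $p$ explicitly from the $I$-bundle structure on $V$ that comes with the vertical $I$-bundle involution $h$. Fix an $I$-bundle projection $\pi: V \to F_0$ together with a fiberwise parametrization by $[-1, 1]$ for which $h$ acts on each fiber as $t \mapsto -t$; with this convention $\Fix(h) \cap V$ is the zero section, a copy of $F_0$. This structure provides a globally defined fiberwise scaling $\phi_t: V \to V$, given in local fiber coordinates by $(b, s) \mapsto (b, ts)$ for $t \in [0, 1]$, which is a deformation retraction of $V$ onto $F_0$. Restricting to $\Sigma = \partial V$ yields a homotopy $\phi: \Sigma \times [0, 1] \to V$ from $\phi_1 = j$ to a retraction $\phi_0: \Sigma \to \Fix(h) \cap V$ that sends each $x \in \Sigma$ to the midpoint of the fiber through it.

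Because $h$ reflects each fiber, $\phi_0 \circ h = \phi_0$, so the rule
\[
p(x, t) = \begin{cases} \phi_t(x) & (t \geq 0), \\ \phi_{-t}(h(x)) & (t \leq 0) \end{cases}
\]
gives a well-defined continuous map $p: \Sigma \times [-1, 1] \to V$. Condition (i) is immediate from $\phi_1 = j$: indeed $p(x, 1) = x$ and $p(h(x), -1) = \phi_1(x) = x$. A direct computation using $h^2 = \id$ yields $p \circ \hat{h} = p$, so $p$ descends through the quotient by $\hat{h}$.

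It remains to show that $p$ identifies $V$ with $(\Sigma \times [-1, 1]) / \hat{h}$, which will give condition (ii) with branch set $p(\Fix(h|_\Sigma) \times \{0\})$. For this I would compute preimages directly in fiber coordinates: for $b \in \interior F_0$ and $u \neq 0$ the preimage $p^{-1}(b, u)$ is the single $\hat{h}$-orbit $\{((b, +1), u), ((b, -1), -u)\}$, while $\hat{h}$ acts freely away from $\Fix(h|_\Sigma) \times \{0\}$ and, at each of its fixed points, looks like rotation by $\pi$ about a $1$-dimensional axis, which is the standard local model of a double branched cover. The principal technical issue is to describe the $I$-bundle uniformly across the ``horizontal'' boundary and the ``vertical'' boundary $\pi^{-1}(\partial F_0)$ (the latter appearing only in the trivial bundle case $V = F_0 \times I$, where some fibers of $\pi$ lie entirely inside $\Sigma$); the scaling formula $\phi_t(b, s) = (b, ts)$ is designed to handle both cases on the same footing, because $\Fix(h)$ corresponds to $s = 0$ on every fiber regardless of whether the fiber lies in $\interior V$ or in $\Sigma$.
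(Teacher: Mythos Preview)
Your map $p$ is not a branched covering. The base $F_0$ of any $I$-bundle structure on a handlebody $V$ necessarily has nonempty boundary (since $\pi_1(V)\cong\pi_1(F_0)$ is free), so the vertical boundary $\pi^{-1}(\partial F_0)\subset\Sigma$ is always present, not only in the product case. Over this region your formula collapses too much. In local fibre coordinates one has $p((b,s),t)=(b,st)$; when $b\in\partial F_0$ the entire fibre $\{b\}\times[-1,1]$ lies in $\Sigma$, so
\[
p^{-1}(b,u)=\{((b,s),t): st=u,\ s,t\in[-1,1]\}
\]
is a one-dimensional arc pair (a cross when $u=0$), not a pair of points. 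Thus $p$ squashes a neighbourhood of each vertical boundary annulus and cannot be a double branched cover; equivalently, the induced map $(\Sigma\times[-1,1])/\hat h\to V$ is not a homeomorphism near $\partial F_0$.

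The paper sidesteps this by reversing the order of construction. It first forms the abstract quotient $\hat V:=(\Sigma\times[-1,1])/\hat h$, which is automatically the target of a double branched covering with the required covering involution and branch set, and then identifies $\hat V$ with $V$ relative to the boundary. That identification is obtained not by a formula but by matching meridian systems: a complete arc system $\{\delta_i\}$ on $F_0=\Sigma/h$ lifts to a complete meridian system $\{\alpha_i\}$ on $\Sigma=\partial V$, and one checks that each $(\alpha_i\times[-1,1])/\hat h$ is a meridian disk of $\hat V$ bounded by the same curve $\alpha_i$, so the boundary identification extends to a homeomorphism $\hat V\cong V$. If you want to salvage the explicit approach, you would need to replace the fibrewise scaling by a map that is genuinely two-to-one near the vertical boundary; the scaling formula does not do this.
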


\begin{proof}
Let $\hat V$ be the quotient of $\Sigma\times[-1,1]$
by the orientation-preserving involution $\hat h$ defined by the formula in (ii),
and let $\hat p:\Sigma\times[-1,1] \to \hat V$ be the projection.
Then $\hat p$ is a double branched covering projection
with branched set the image of $\Fix(h)\times \{0 \} \subset \Sigma\times [-1,1]$ in $\hat V$,
and the restriction of $\hat p$ to $\Sigma\times \{1 \}$ is 
a homeomorphism onto $\partial\hat V$.
We identify $\partial\hat V$ with $\Sigma$ via this homeomorphism,
i.e., identify each $x\in\Sigma$ with $\hat p(x,1)\in\partial\hat V$.
We show that the identification of $\Sigma=\partial V$ with $\partial\hat V$
extends to a homeomorphism from $V$ to $\hat V$.
To this end, recall the assumption that
$h$ extends to a vertical $I$-bundle involution of $V$,
that is, there exists an $I$-bundle structure of $V$
such that $h$ preserves each fiber setwise and acts on it as a reflection.
Then the base space of the $I$-bundle structure is
identified with the quotient surface $F:=\Sigma/h$
and we can construct a complete meridian system of $V$ as follows.
Pick a complete arc system $\{\delta_i\}_{i=1}^g$ of $F$.
Then the preimages of these arcs by the $I$-bundle projection 
form a complete meridian disk system of $V$.
Let $\{\alpha_i\}_{i=1}^g$ be the family of essential loops on $\Sigma=\partial V$
obtained as the boundaries of these meridian disks.
Note that the involution $h$ preserves each $\alpha_i$ and that
$\alpha_i/h=\delta_i\subset \Sigma/h=F$. 
This implies that the quotient $(\alpha_i\times [-1,1])/\hat h$ is a meridian disk
of the handlebody $\hat V=\Sigma\times[-1,1]/\hat h$
bounded by the loop $\alpha_i \subset \Sigma=\partial\hat V$.
Since the meridian loop $\alpha_i$ of $V$ remains to be a meridian loop of 
$\hat V$ under the identification of $\Sigma=\partial V$ with $\partial\hat V$,  
the identification homeomorphism extends to a homeomorphism from $V$ to $\hat V$.
Thus the composition of the branched covering projection 
$\hat p:\Sigma\times[-1,1] \to \hat V$ 
and the identification homeomorphism $\hat V \cong V$
determines the desired branched covering projection 
$p:\Sigma\times [-1,1] \to V$.
\end{proof}

By using the result of Kim-Tollefson \cite[Theorem A]{KimTollefson77}
on involutions of product spaces,
we can obtain the following converse to Lemma \ref{lem:I-bundle-handlebody}.

\begin{lemma}
\label{lem:I-bundle-handlebody_converse}
Let $V$ be a handlebody with $\partial V=\Sigma$,
and $p:\Sigma\times [-1,1] \to V$ a double branched covering projection
such that the restriction $p|_{\Sigma \times \{1 \}} : \Sigma \times \{1 \} \to \partial V = \Sigma$ 
is the identity, i.e., $p(x,1)=x$ for every $x\in\Sigma$.
Then there exists an orientation-reversing involution $h$ of $\Sigma$ 
that extends to a vertical $I$-bundle involution of $V$ 
such that $p$ is equivalent to the covering projection
constructed from $h$ as indicated in Lemma $\ref{lem:I-bundle-handlebody}$.
To be precise, there exists a self-homeomorphism of $\Sigma\times [-1,1]$
that fixes $\Sigma\times \{1 \}$
such that the composition of this homeomorphism and $p$  
is equal to the covering projection constructed in Lemma $\ref{lem:I-bundle-handlebody}$.
\end{lemma}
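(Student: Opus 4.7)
The plan is to analyze the covering involution $\tau$ of the branched cover $p$, show that $\tau$ must swap the two boundary components of $\Sigma \times [-1,1]$, read off an orientation-reversing involution $h$ of $\Sigma$ from its action on $\Sigma \times \{1\}$, and then invoke Kim--Tollefson's classification of PL involutions of product 3-manifolds to put $\tau$ into the standard form $\hat h(x,t) = (h(x),-t)$ via a conjugating homeomorphism that can be chosen to fix $\Sigma \times \{1\}$ pointwise. Once this is accomplished, the branched covering will be identified with the one constructed in Lemma \ref{lem:I-bundle-handlebody}, and the associated vertical $I$-bundle structure on $V$ will be immediate.

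First, I would let $\tau$ be the non-trivial deck transformation of $p$. Since $p|_{\Sigma \times \{1\}}$ is the identity onto $\partial V = \Sigma$, no branch points lie on $\Sigma \times \{1\}$; moreover, because the branch locus of $p$ has codimension two in $V$, it is contained in the interior of $V$, so $p^{-1}(\partial V) = \partial(\Sigma \times [-1,1]) = \Sigma \times \{-1,1\}$. This forces $\tau(\Sigma \times \{1\}) = \Sigma \times \{-1\}$, and writing $\tau(x,1) = (h(x),-1)$ defines an involution $h$ of $\Sigma$. Since $\tau$ preserves orientation (the branch set has codimension two, so $\tau$ is orientation-preserving on a dense open subset and hence on the whole cover) while reversing the $[-1,1]$-direction at the boundary, $h$ must be orientation-reversing.

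Next, I would apply Kim--Tollefson's Theorem A to the involution $\tau$ of $\Sigma \times [-1,1]$: up to conjugation by a self-homeomorphism $\psi$ of $\Sigma \times [-1,1]$, $\tau$ is equivalent to the standard product involution $\hat h(x,t) = (h(x),-t)$. Because $\tau$ and $\hat h$ already agree on $\Sigma \times \{1\}$, a collar-neighborhood adjustment of $\psi$ near this boundary component should upgrade the conjugation so that $\psi$ fixes $\Sigma \times \{1\}$ pointwise. Then $p \circ \psi^{-1}$ is a double branched cover of $V$ with deck involution $\hat h$ and trivial restriction on $\Sigma \times \{1\}$, which is precisely the covering constructed in Lemma \ref{lem:I-bundle-handlebody}; comparing the two, $V$ is recovered as the quotient of $\Sigma \times [-1,1]$ by $\hat h$, and $h$ extends to the associated vertical $I$-bundle involution of $V$. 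The main obstacle I expect is the precise extraction of Kim--Tollefson's theorem in the form needed here, in particular promoting the conjugating homeomorphism $\psi$ from fixing $\Sigma \times \{1\}$ setwise to fixing it pointwise, which will require a careful collar-neighborhood isotopy argument exploiting the already-established agreement of $\tau$ with $\hat h$ on that boundary component.
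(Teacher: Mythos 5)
Your proposal follows essentially the same route as the paper: invoke Kim--Tollefson's Theorem A to conjugate the deck involution $\tau$ (which must swap the two boundary components) into the standard form $h\times(-1)$ by a self-homeomorphism fixing $\Sigma\times\{1\}$, and then identify $p$ with the covering of Lemma \ref{lem:I-bundle-handlebody}. The only place you are thinner than the paper is the final claim that $h$ extends to a vertical $I$-bundle involution of $V$, which you declare ``immediate'': the paper actually constructs the extension as the descent $\bar h$ of $h\times 1$ to $V=\Sigma\times[-1,1]/(h\times(-1))$ and checks that $\Fix(\bar h)$ is the properly embedded surface $F=\Sigma\times\{0\}/\!\sim$ together with the annuli over $\Fix(h)$, which exhibits the $I$-bundle structure over $F\cong\Sigma/h$ with $\bar h$ acting as the fiberwise reflection; this short verification should be included, but it poses no obstacle.
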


\begin{proof}
Let $g$ be the covering transformation of the double branched covering $p$.
Since $g$ interchanges the two components of $\Sigma\times\partial I$,
the result \cite[Theorem A]{KimTollefson77} implies that
there exists an orientation-reversing involution $h$ of $\Sigma$
such that $g$ is equivalent to the involution $h\times (-1)$.
To be more precise, we can see that $g$ is conjugate to $h\times (-1)$
by a self-homeomorphism of $\Sigma\times [-1,1]$
that fixes $\Sigma\times \{1 \}$.
Thus we may assume that $g=h\times (-1)$.
By the assumption, $\Sigma\times [-1,1]/g$ is identified with 
the handlebody $V$ in such a way that
the point $[x,1]$ of $\Sigma\times [-1,1]/g$
represented by $(x,1)$ 
is identified with the point $x\in\Sigma=\partial V$
for every $x\in\Sigma$.
Now consider the involution $h\times 1$ of $\Sigma\times [-1,1]$.
This map is commutative with the involution $g=h\times (-1)$
and so it descends to an involution $\bar h$ of
$V=\Sigma\times [-1,1]/g$.
The restriction of $\bar h$ to $\partial V=\Sigma$
is equal to $h$.
Moreover, $\bar h$ is a vertical $I$-bundle involution of $V$, as shown below.
Note that $V=\Sigma\times [-1,1]/g=\Sigma\times [0,1]/(x,0)\sim (h(x),0)$,
and so there exists a deformation retraction of $V$ onto 
the subspace $F:=\Sigma \times \{0 \} /(x,0)\sim (h(x),0)$.
Thus $F$ is a compact surface with nonempty boundary,
which is embedded in the interior of $V$ and is a deformation retract of $V$.
Note that $\Fix (\bar h)$ is equal the union of the image of 
$\Fix(h) \times [0,1]$ and the image of $\Sigma\times \{0 \}$.
The former is a disjoint union of annuli and the latter is equal to $F$.
Thus $\Fix (\bar h)$ is a surface properly embedded in $V$
and contains $F$ as its deformation retract.
This implies that $\bar h$ is an $I$-bundle involution,
where $\Fix (\bar h)\cong F$ is the base space of the $I$-bundle structure of $V$.
Thus we have proved that the involution $h$ of $\Sigma=\partial V$
extends to the vertical $I$-bundle involution $\bar h$ of $V$.
Since the covering involution $g$ of 
the double branched covering projection $p:\Sigma\times [-1,1] \to V$
is given by $g=h\times (-1)$, 
we can say that $p$ is obtained from $h$, satisfying the prescribed condition,
as indicated in Lemma $\ref{lem:I-bundle-handlebody}$.
\end{proof}

\begin{proof}[Proof of Theorem $\ref{thm:vbf-theorem}$]
For $i=1,2$, pick an element $h_i$ of $\Inv(V_i)\subset\MCG(\Sigma)$.
By Lemma \ref{lem:I-bundle-Involution},
$h_i$ is represented by an orientation-reversing involution of $\Sigma$
that extends to a vertical $I$-bundle involution of $V_i$.
We continue to denote the orientation-reversing involution of $\Sigma$ by $h_i$.
Let $p_i:\Sigma\times [-1,1]\to V_i$
the double branched covering projection
given by Lemma \ref{lem:I-bundle-handlebody}.
Take two copies $[-1,1]_i$ of $[-1,1]$,
and regard $p_i$ as a map $\Sigma\times [-1,1]_i\to V_i$.
Let $\tilde M$ be the space obtained from
the disjoint union 
$\sqcup_{i=1}^2 \Sigma\times [-1,1]_i$
through the identification
\[
(x, 1)_1 \sim (x,1)_2, \quad (h_1(x), -1)_1 \sim (h_2(x),-1)_2 \quad (x\in\Sigma).
\] 
Here $(x,t)_i$ denotes the point in $\Sigma\times [-1,1]_i$ 
corresponding to $(x,t)\in\Sigma\times [-1,1]$.
Then $\tilde M$ is a $\Sigma$-bundle over $S^1$ with monodromy 
$h_1^{-1} \circ h_2=h_1 \circ h_2$. 
Moreover we can glue the branched covering projections
$p_i:\Sigma\times [-1,1]_i\to V_i$ ($i=1,2$) together
to obtain a continuous map 
$p:\tilde M\to M=V_1\cup_{\Sigma} V_2$,
because 
\[
p_1((h_1(x),-1)_1)=p_1((x,1)_1)=x=p_2((x,1)_2)=p_2((h_2(x),-1)_2).
\]
Then $p$ is a branched covering projection
whose branch set is the union of those of $p_1$ and $p_2$.
Hence the $\Sigma$-bundle over $S^1$ with monodromy $h_1 \circ h_2$
is a double branched covering space of $M$.
Moreover the preimage $p^{-1}(\Sigma)$ of the Heegaard surface 
$\Sigma=\partial V_1=\partial V_2$
is the image of $\Sigma\times\partial [-1,1]_1$
(and so is that of $\Sigma\times\partial [-1,1]_2$) in $\tilde M$.
Thus, $p^{-1}(\Sigma)$ is a union of two fiber surfaces.
This completes the proof of the first assertion of 
Theorem \ref{thm:vbf-theorem} and the assertion 
$\{h_1 \circ h_2 \mid h_i\in \Inv(V_i)\}\subset D(M,\Sigma)$.

\smallskip

We prove $D(M,\Sigma)\subset \{h_1\circ h_2 \mid h_i\in \Inv(V_i)\}$.
To this end, let 
$p:\tilde M \to M$ be a double branched covering
satisfying the conditions (i) and (ii) of 
Theorem \ref{thm:vbf-theorem},
and let $\tau$ be the covering involution.
By the condition (ii), $p^{-1}(\Sigma)$ consists of two (distinct and so disjoint)
fiber surfaces, $\Sigma_0$ and $\Sigma_1$,
and $\tau$ interchanges these two components.
Set $\tilde V_i=p^{-1}(V_i)$ ($i=1,2$).
Then 
$\tilde V_1\cap \tilde V_2=\partial \tilde V_1=\partial \tilde V_2=
\Sigma_0\sqcup\Sigma_1$
and $\tilde V_1\cong \tilde V_2\cong \Sigma\times [-1,1]$.
We identify the fiber surface $\Sigma_0$ with 
the Heegaard surface $\Sigma$ via the restriction $p|_{\Sigma_0}$.
Then there exists a homeomorphism $\psi_i:\tilde V_i \to \Sigma\times [-1,1]$
such that $\psi_i(x)=(p(x),1)$ for every $x\in \Sigma_0$.
Let $\tau_i$ be the involution of $\Sigma\times [-1,1]$  defined by
$\psi_i\circ \tau|_{\tilde V_i}\circ \psi_i^{-1}$.
Then $p_i:=p|_{\tilde V_i}\circ \psi_i^{-1}:\Sigma\times [-1,1] \to V_i$
is a double branched covering 
whose restriction to $\Sigma\times \{1 \}$ is the identity map
onto $\Sigma=\partial V_i$.
Hence, by Lemma \ref{lem:I-bundle-handlebody_converse}, 
there exists an element $h_i\in\Inv(V_i)$ such that 
the covering $p_i$ is equivalent to that constructed from $h_i$
as indicated in Lemma \ref{lem:I-bundle-handlebody}.
This implies that the monodromy of the $\Sigma$-bundle $\tilde M$
is equal to $h_1 \circ h_2$.
\end{proof}

The characterization of $D(M,\Sigma)$ in Theorem \ref{thm:vbf-theorem}
reminds us of the result of A'Campo \cite[Corollary 1]{A'Campo03}
which says that
the geometric monodromy of an isolated complex hypersurface singularity,
which is defined by a real equation,
is the composition of two orientation-reversing involutions of the fiber,
one of which is the restriction of the complex conjugation. 
Brooks \cite{Brooks85} and Montesinos \cite{Montesinos87} independently
proved that $D(M,\Sigma)$ contains a pseudo-Anosov element 
whenever $g(\Sigma)\ge 2$.
Hirose and Kin \cite{HiroseKin20} studied the asymptotic behavior of 
the minimum of the dilatations of pseudo-Anosov elements
contained in $D(S^3,\Sigma_g)$ as $g\to\infty$, 
where $\Sigma_g$ is the genus-$g$ Heegaard surface of $S^3$.

When $g(\Sigma)=1$,
we will see in 
the companion paper \cite{KodaSakuma20b}
that, for any element $\phi$ of $D(M,\Sigma)$,
the minimum translation length $d(\phi)$ 
of the action of $\phi$ on the curve graph
is comparable with $2d(\Sigma)$, where $d(\Sigma)$
is the Hempel distance of $\Sigma$.
We expect that this toy example may be extended to a result
for general Heegaard splittings.

\begin{question}
\label{question:estimate-translation-length}
{\rm For a Heegaard splitting $M=V_1\cup_{\Sigma} V_2$
of a closed, orientable $3$-manifold $M$
and for an element $\phi\in D(M,\Sigma)$,
is there an estimate of $d(\phi)$, the translation length or the asymptotic translation length 
of the action of $\phi$ on the curve graph of $\Sigma$,
in terms of the Hempel distance $d(\Sigma)$?}
\end{question}

\bibstyle{plain}

\bigskip

\end{document}